\theoremstyle{plain} \newtheorem{theorem}{Theorem}[section]
\theoremstyle{plain} \newtheorem{corollary}[theorem]{Corollary}
\theoremstyle{plain} \newtheorem{proposition}[theorem]{Proposition}
\theoremstyle{plain}\newtheorem{lemma}[theorem]{Lemma}
\theoremstyle{definition} \newtheorem{definition}[theorem]{Definition}
\theoremstyle{definition}
\theoremstyle{remark}\newtheorem{remark}[theorem]{Remark}
\theoremstyle{definition}\newtheorem*{hypotheses}{Hypotheses}
\theoremstyle{definition}
\theoremstyle{remark}
\newcommand{\F}{{\mathcal{F}}}
\def\restriction#1#2{\mathchoice
              {\setbox1\hbox{${\displaystyle #1}_{\scriptstyle #2}$}
              \restrictionaux{#1}{#2}}
              {\setbox1\hbox{${\textstyle #1}_{\scriptstyle #2}$}
              \restrictionaux{#1}{#2}}
              {\setbox1\hbox{${\scriptstyle #1}_{\scriptscriptstyle #2}$}
              \restrictionaux{#1}{#2}}
              {\setbox1\hbox{${\scriptscriptstyle #1}_{\scriptscriptstyle #2}$}
              \restrictionaux{#1}{#2}}}
\def\restrictionaux#1#2{{#1\,\smash{\vrule height .8\ht1 depth .85\dp1}}_{\,#2}}
\numberwithin{equation}{section}
\title[Deformations and moduli of structures on manifolds]{Deformations and moduli of structures on manifolds: general existence theorem and application to the sasakian case}
\author{Laurent Meersseman and Marcel Nicolau}
\date{\today}
\thanks{This work was partially supported by the grants Marie Curie 271141 DEFFOL and MTM2011-26674-C02-01 from the Ministry of Economy and Competitiveness of Spain and by the grant UNAB10-4E-378 co-funded by ERDF  ``A way to build Europe". 
The first author enjoyed the warmful atmosphere of the CRM at Bellaterra during part of the preparation of this work. We also thank J.A. \'Alvarez-L\'opez for giving us a proof of Proposition \ref{density}.}
\subjclass{58H15, 
32G05, 
32G07, 
53C25.
}
\address{Laurent Meersseman\\ LAREMA\\ Universit\'{e} d'Angers\\ F-49045 Angers Ce\-dex, France\\ laurent.meersseman@univ-angers.fr}
\address{Marcel Nicolau\\ Departament de Matem\`{a}tiques \\ Universitat Aut\`{o}noma de Barcelona \\ E-08193  Bellaterra (Barcelona), Spain \\
 nicolau@mat.uab.cat}
\begin{document}
\begin{abstract}
In this paper, we deal with the moduli problem for geometric structures on manifolds. We prove an existence theorem of a local moduli space in a very general setting. Then, to show the strength of this result, we
apply it to the case of sasakian and Sasaki-Einstein structures for which until now only partial results are known.
\end{abstract}

\maketitle

\tableofcontents

\section{Introduction}
\label{intro}
One of the central problems in the study of geometric structures on manifolds is the moduli problem, i.e. the classification of such structures up to isomorphism. One asks for the existence of a space with nice properties (e.g. manifold or analytic space, including infinite-dimensional ones) parametrizing the isomorphism classes. In many cases it is too much to ask for and such a global space does not have a natural structure of manifold, nor is the zero set of global equations in a manifold. However, the local situation is much better and easier to handle, at least when defining an appropriate notion of local moduli space.\smallskip

This local strategy was initiated by Kodaira-Spencer in describing the small deformations of complex structures on compact manifolds. Already in their seminal paper \cite{K-S}, by looking at deformations of tori, Hopf and Hirzebruch surfaces, they remarked that the global point of view is not tractable in general and that the local one needs a weaker notion of local parameter space.\smallskip

In this paper, we deal with the local point of view but consider more general structures than complex ones. Especially, we consider sasakian structures for which until now only partial results are known. \smallskip

To be more precise,
the aim of the article is twofold

\begin{itemize}
\item proving an existence theorem of a local moduli space for geometric structures in a very general setting.
\item applying this theorem to the case of sasakian and Sasaki-Einstein structures.
\end{itemize}

In the case of complex structures, the local moduli space provided by our general theorem coincides with the Kuranishi space. Apart from sasakian structures, it can also be applied to many other geometric structures that will be considered subsequently.\smallskip

Let us begin with making some general comments. Solving the local moduli problem for a certain class of structures supposes attacking and solving three different but related sides of the question.

\begin{itemize}
\item a conceptual side: one has to {\it define} precisely what is meant by a local moduli space. It must be general enough to allow an existence theorem under reasonable hypotheses, and natural enough so that such a result is meaningful.
\item a theoretical side: to prove and state an existence theorem, as well as associated results (as a rigidity result).
\item a practical side: to provide technical tools to compute the local moduli space on explicit examples.
\end{itemize}

In the case of complex structures, the conceptual side is the notion of versal space; the theoretical side is Kuranishi's Theorem that asserts that every compact complex manifold $X$ has a versal space (called here the Kuranishi space of $X$); and the practical side is given by the criteria for a deformation to be rigid, or complete, or versal
in terms of the Kodaira-Spencer map and of the cohomology with values in the sheaf of holomorphic tangent vectors.\smallskip

Let us emphasize that a versal space is not an ideal local moduli space encoding every class of structures close to the base one as a single point. Nor it is a local moduli space in the classical sense recalled in section \ref{localmoduli}. For example, if $X$ is the second Hirzebruch surface, then the Kuranishi space of $X$ is (the germ of) a disk $\mathbb D$ with $0\in\mathbb D$ encoding $X$ and all other points encoding $\mathbb P^1\times\mathbb P^1$. Hence this disk encodes only two different complex structures up to biholomorphism.\smallskip 

Indeed, the Kuranishi space encodes every class of structures close to the base one (it is {\it complete}), but the requirement of single coding is replaced by the fact that it is minimal amongst all complete analytic spaces (it is {\it versal} (also called semi-universal) deformation space, cf. \cite{K-S} or \cite{MK}). This minimality property means that this space is the best approximation in the category of analytic spaces of the local moduli stack (cf. section \ref{stacks}).\smallskip

 The notion of versal space is also adapted to the case of transversely holomorphic foliations, for which an existence theorem is proved in \cite{GHS} and in \cite{AzizMarcel} for the case with fixed differentiable foliated type.  Apart from these two classes and some variants, and although versality can be defined in a very general setting, in many other situations (CR structures, smooth foliations,  ... ), it is not true, or not known, whether a versal space exists.\smallskip

A strict application of Kodaira-Spencer-Kuranishi methods requires the existence of an elliptic resolution of the sheaf of infinitesimal automorphisms of the structure. This has strong consequences, in particular the finiteness of the dimension of the versal space. This prevents from using these techniques in many cases. Nevertheless, if we focus on the proof of Kuranishi's Theorem given in \cite{Kur1}, the arguments and strategy only involve very general tools that have nothing to do with the cohomological aspects of the classical deformation theory of Kodaira-Spencer. This explains that Kuranishi's approach is used by Ebin in \cite{Ebin} to prove the existence of a local moduli space for riemannian metrics, and by Donaldson for ASD hermitian connections in \cite{Donaldson}. Both are indeed global moduli spaces, benefiting from the fact that the isotropy groups are finite (at least for some generic choices). \smallskip

The strategy of this paper is to clean up Kuranishi's proof given in \cite{Kur1}, to keep from it only the scheme that was applied in Ebin's and Donaldson's cases, and to give general hypotheses under which it can be applied to. This supposes to be in a coherent general setting and to have a definition of a (generalized) local moduli space. Both comes from \cite{modules}, with slight modifications. Let us be more specific. Kuranishi's Theorem is a slice theorem and not a deformation theorem. At a fixed base point, one looks for a local slice to the action of the diffeomorphism group onto the space of complex operators. Versality can then be interpreted as a minimality condition of the slice: the orbit of the base point must intersect the slice only at a discrete set, see \cite{modules} and section \ref{minimal}. All this can easily be transposed to the abstract setting of a topological group acting on a Hilbert manifold.\smallskip

In this framework, one forgets about families and deformations, since one then tries to show the existence of a minimal slice using only the classical inverse function Theorem. We prove that there are basically two conditions to succeed. The first one is geometric and obvious: the local orbit of the base point must be closed. The second one is analytic and more surprising: the isotropy group must contain only more regular elements. To understand it, think of the case where the structures under consideration are sections of some smooth bundle. To run the program, one uses sections of Sobolev class, say $l$, (to be in a Banach space). Then, the condition is that the isotropy group contains only elements of class at least $l+1$. This requirement is a direct consequence of the fact that composition of diffeomorphisms of same Sobolev class is only a continuous and not a $C^1$ map. And it is obviously satisfied for complex structures (since automorphisms are holomorphic), riemannian metrics (since isometries are of class $C^\infty$), and generic ASD connections (since their isotropy group is plus or minus the identity). \smallskip

We consider two different settings. The most general one, developed in section \ref{existence}, in which the set of structures are modeled on arbitrary Banach manifolds and analytic spaces, and the second one, developed in section \ref{existencesmooth}, in which the set of structures is the set of sections of a bundle over the given manifold. The main results are gathered in Theorem \ref{main} for the first setting, and in Theorem \ref{mainsmooth} for the second one.\smallskip

To demonstrate the strength of our results, we show the existence of a Kuranishi type moduli space for sasakian and Sasaki-Einstein structures. A sasakian manifold is an odd-dimensional riemannian manifold endowed with a very special metric (cf. section \ref{sasakian}) and comes equipped with many induced additional structures: a strongly pseudo-convex CR structure, a contact form, a flow  by isometries, a transversely K\"ahler foliation. All these structures are closely linked one to each other. Hence there are several deformation problems one can state and study, depending on what is kept fixed or not. Some of them have been solved (cf. the reference book \cite{BoyerBook}), including the case of deformations of the transversely K\"ahler structure (the smooth foliation being fixed), which relies on the results of \cite{AzizMarcel}. But there is no comprehensive construction of a general local moduli space, general meaning that we allow deformations of all structures at the same time. \smallskip

This is what we do in Theorem \ref{mainsasakian}. In the same vein, we also construct in Theorem \ref{Kuretasasakien} a Kuranishi type moduli space for deformations of the contact form of a sasakian manifold. We also describe the relationship between these different Kuranishi type moduli spaces associated to a sasakian structure. Finally we deduce Kuranishi type spaces for deformations of Sasaki-Einstein manifolds.

\section{Existence of local moduli sections and spaces}
\label{existence}
In this part, we prove a general theorem on the existence of local moduli sections and spaces in the setting of \cite{modules}.

\subsection{Global setting}
\label{GS}

We follow, with slight modifications, the setting introduced in \cite{modules}.
\vspace{5pt}\\
We denote by $X$ a compact, connected, smooth manifold, by $\mathcal E$ a set of structures
on $X$. We assume that $\mathcal E$ is a Banach manifold (over $\mathbb R$ or $\mathbb C$)
and contains a closed subspace $\mathcal I$ of integrable structures (cf. Section \ref{examples} or \cite{modules} for concrete examples). 
\vspace{5pt}\\
We consider a topological group $G$ with countable topology. We assume that $G$ acts continuously on $\mathcal E$ by smooth transformations, 
preserving $\mathcal I$.
\vspace{5pt}\\
Given $J_0\in\mathcal I$, we are interested in finding a local section to the action of $G$ at $J_0$, which has moreover good properties of minimality. As explained in Section \ref{intro}, the crucial point is to define minimality in this very general context.
\vspace{5pt}\\
Before proceeding, let us precise some notations and definitions. 
\vspace{5pt}\\
Given two Banach manifolds, by a smooth map between them, we mean a $C^\infty$ one. In the case where they are modeled over a complex vector space, we assume that the differential of the map commutes with complex multiplication (hence the map has to be thought of as a holomorphic map).  A smooth bijective map with smooth inverse is an isomorphism (hence we use the generic word isomorphism instead of diffeomorphism and biholomorphism). In a Banach vector space, a map is analytic if it is analytic in the sense of
\cite{Douady}. An analytic set is the zero set of a finite number of analytic functions.
\vspace{5pt}\\
We require that a smooth submanifold of a Banach manifold has closed tangent space at each point. If $K$ is a closed subset of $\mathcal E$, then a map from $K$ to a Banach manifold is $C^k$, respectively smooth, if it is the restriction of a $C^k$, respectively smooth map defined on some open set of $\mathcal E$ containing $K$. A smooth bijective map between closed subsets of Banach manifolds whose inverse is smooth is called an isomorphism.
\vspace{5pt}\\
We denote by $J\cdot g$ the action\footnote{We make $G$ act on the right. In the many cases where $J$ is encoded by a $1$-form with values in some bundle, this means that $G$ acts by pullback.} of an element $g\in G$ onto a structure 
$J\in\mathcal E$. We say that two structures $J_1$ and $J_2$ are equivalent, and write $J_1\sim J_2$, if they belong to the same $G$-orbit. 
\subsection{Existence of a local section}
\label{local section}

Let $J_0\in\mathcal I$. We assume that there exists a Banach vector space
$T$ and a homeomorphism
\begin{equation}\label{Gcarte}
0\in V\subset T\buildrel \phi\over \longrightarrow e\in W\subset G
\end{equation}
between a connected open neighborhood $V$ of $0$ and a connected open neighborhood $W$
of the neutral element $e$ of $G$ such that the action of $G$ onto $\mathcal E$ is
smooth in this chart, that is
\begin{equation}\label{action}
(\xi,J)\in V\times U\longmapsto J\cdot\phi(\xi)\in\mathcal E
\end{equation}
is smooth for $U$ a connected open neighborhood of $J_0$.

\begin{remark}
Observe that chart (\ref{Gcarte}) depends on $J_0$. This may seem curious at first sight, but this is exactly what happens in the classical case of complex structures (cf. Section \ref{complexe}).
\end{remark}

Call $L$ the differential of (\ref{action}) at $(0,J_0)$. Let 
$$
E:=\text{Ker }\restriction{L}{T\times \{0\}}
$$
seen as a subspace of $T$. 

\begin{hypotheses}\hfill
\vspace{-5pt}\begin{description}
\item[(H1)] The vector subspace $E$ admits a closed complement $E^\perp$ in $T$.
\item[(H2)] The differential $L$ has the form $L(\xi,\omega)=\omega+P\xi$ for some linear
bounded operator $P : T\to T_{J_0}\mathcal E$.
\item[(H3)] Set $F:=\text{Im }P$. Then $F$ is closed in $T_{J_0}\mathcal E$ and admits a closed 
complement $F^\perp$.
\end{description}
\end{hypotheses}

\begin{remark}
(H1) and the second part of (H3) are automatically satisfied if $\mathcal E$ is an Hilbert manifold, which is often the case in practice 
(cf. Section \ref{complexe}). (H2) is satisfied in many interesting cases, and, in any case, it is very easy to check. So the crucial point is the first part of (H3). Checking that $\text{Im }P$ is closed is usually the hard point.
\end{remark}

\begin{remark}
$P$ will often - but not always - be a differential operator, cf. section \ref{diff}.
\end{remark}

Let $\tilde K$ be a submanifold of $U$ passing through $J_0$ and tangent to $F^\perp$ at $J_0$ (if $\exp : U'\subset T_{J_0}\mathcal E\to U$ is a local chart 
at $J_0$, just take $\exp (F^\perp\cap U')$ as $\tilde K$).
\vspace{5pt}\\
We are now in position to prove the existence of a local section.

\begin{proposition}
\label{lcexistence}
Assume {\rm (H1), (H2)} and {\rm (H3)}. Then, shrinking $V$ and $U$ if necessary, the map $\Phi$ from $E^\perp\cap V\times\tilde K$ to $U\subset\mathcal E$ defined by
\begin{equation}
\label{Kuranishi}
(\xi, J)\longmapsto \Phi(\xi, J):=J\cdot \phi(\xi)
\end{equation}
is an isomorphism at $(0, J_0)$.
\end{proposition}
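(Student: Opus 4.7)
The plan is to apply the inverse function theorem for Banach manifolds to $\Phi$ at the point $(0,J_0)$. Since the action in the chart \eqref{action} is smooth, $\phi$ is a homeomorphism and $\tilde K$ is a smooth submanifold, $\Phi$ is a smooth map between Banach manifolds, and its domain has tangent space $E^\perp \times F^\perp$ at $(0,J_0)$ (here we use (H1) to see $E^\perp$ as a Banach space, and we use that $F^\perp$ is by construction $T_{J_0}\tilde K$). The target $U$ has tangent space $T_{J_0}\mathcal E$ at $J_0$. Everything then reduces to showing that the differential $d\Phi_{(0,J_0)}$ is a bounded linear isomorphism between these two Banach spaces.

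To compute this differential, I would differentiate separately in the two factors. Varying $J$ at $\xi=0$ gives the inclusion $F^\perp \hookrightarrow T_{J_0}\mathcal E$, since $\phi(0)=e$ acts trivially. Varying $\xi$ at $J=J_0$ gives, by definition of $L$ and by (H2), the operator $\xi \mapsto L(\xi,0) = P\xi$. Therefore
\begin{equation*}
d\Phi_{(0,J_0)}(\xi,v) = P\xi + v, \qquad (\xi,v)\in E^\perp\times F^\perp.
\end{equation*}
This map is bounded because $P$ is bounded by (H2) and the inclusion of $F^\perp$ is continuous.

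Next I would check bijectivity using (H1) and (H3). Observe first that $E = \ker(P|_T) = \ker P$, because $L(\xi,0)=P\xi$. For injectivity: if $P\xi + v = 0$ with $\xi\in E^\perp$ and $v\in F^\perp$, then $P\xi = -v$ lies in $F\cap F^\perp=\{0\}$, so $\xi\in\ker P = E$, hence $\xi\in E\cap E^\perp=\{0\}$ and $v=0$. For surjectivity: any $w\in T_{J_0}\mathcal E$ decomposes uniquely as $w = f+v$ with $f\in F=\mathrm{Im}\,P$ and $v\in F^\perp$ by (H3); pick any preimage $\xi_0$ of $f$ under $P$, decompose $\xi_0 = \xi_E + \xi$ along $E \oplus E^\perp$ using (H1), and note $P\xi_0 = P\xi$ since $\xi_E\in E=\ker P$. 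Thus $w = P\xi + v = d\Phi_{(0,J_0)}(\xi,v)$ with $\xi\in E^\perp$.

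By the open mapping theorem (for which closedness of $F$ in (H3) is essential, so that $F^\perp$ is genuinely closed and the codomain is Banach), a continuous linear bijection between Banach spaces is a topological isomorphism. Applying the inverse function theorem on Banach manifolds, $\Phi$ is a local isomorphism at $(0,J_0)$; shrinking $V$ and $U$ as needed delivers the claim. The main conceptual point, rather than a technical obstacle, is seeing that hypotheses (H1)--(H3) are precisely what is needed to split $T\times F^\perp$ into kernel and complement of $d\Phi_{(0,J_0)}$ in a way compatible with the Banach structure; once this is internalized, the argument is the standard inverse function theorem slice construction.
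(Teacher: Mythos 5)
Your proof is correct and follows essentially the same route as the paper: compute the differential of $\Phi$ at $(0,J_0)$, identify it via (H2) as $(\xi,\omega)\mapsto \omega+P\xi$ from $E^\perp\times F^\perp$ onto $F^\perp\oplus F=T_{J_0}\mathcal E$, and conclude by the inverse function theorem. The paper simply asserts that this differential is an isomorphism, whereas you spell out the injectivity, surjectivity and open-mapping details that (H1)--(H3) are designed to provide.
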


\begin{proof}
The map (\ref{Kuranishi}) is smooth with differential at $(0,J_0)$ equal to
$$
(\xi,\omega)\in E^\perp\times F^\perp\longmapsto \omega\oplus P\xi\in F^\perp\oplus F.
$$
It is an isomorphism, hence the conclusion follows from the inverse function theorem.
\end{proof}

Set now $K=\tilde K\cap\mathcal I$. This is a closed subset of $\mathcal I$. It follows
from the $G$-invariance of $\mathcal I$ that the map
\eqref{Kuranishi} is an isomorphism from $E^\perp\cap V\times K$ to $U\cap\mathcal I$.
\vspace{5pt}\\
The set $K$ is the local section we were looking for. Notice that we have a smooth retraction map
\begin{equation}
\label{retraction}
J\in U\cap\mathcal I\longmapsto \Xi(J):=\left (\Phi^{-1}\right )_2(J)
\end{equation}
whose fibers are included in the $G$-orbits. Here $(\Phi^{-1})_2$ denotes the second component of $\Phi^{-1}$.

\begin{remark}
We insist on the fact that $K$ {\bf is just a closed set} and that a function on $K$ is an isomorphism means that both this function and its inverse are restriction of a smooth map between manifolds. Of course, in many cases, $K$ is a manifold or a analytic space and the isomorphism is really an isomorphism in the sense of the corresponding category, cf. Section \ref{examples}.
\end{remark}

\begin{remark}
 Here retraction just means that 
 $$
  \Xi\circ\Xi\equiv \Xi.
 $$
\end{remark}

\begin{remark}
\label{star}
In the Hilbert case, since $\text{Im }P$ is closed, a complementary subspace is given by the kernel of the Hilbert adjoint $P^*$. If $P$ is a differential operator, using the appropriate norms, the kernel of the formal adjoint gives also a complementary subspace.
\end{remark}

\subsection{Minimality conditions}
\label{minimal}

We want to prove that, under some additional assumptions, the local section has good properties
of minimality. We will consider two minimality conditions.
\vspace{5pt}\\
In the classical case of complex structures (cf. Section \ref{complexe}), versality in the sense of \cite{K-S} is the good minimality condition. However,
it cannot be adapted to our general setting for it supposes to have an associated Kodaira-Spencer theory. To be more precise, in the complex case, 
Kodaira-Spencer theory tells us that versality means bijectivity of the Kodaira-Spencer map at $J_0$. But here we do not have a well defined Kodaira-Spencer map; we do not even have a well defined notion of a deformation as
a flat morphism in some sense. Indeed, the point here is that we try to define minimality in situations where there is no associated Kodaira-Spencer map. 
\vspace{5pt}\\
As a substitute, versality in the complex case can also be defined as minimality of the dimension of the local section at the base point. But this is also unadapted to our setting since it supposes finite dimension of the local section, which is precisely an hypothesis we want to discard, since it is not satisfied in many examples.
 \vspace{5pt}\\
In \cite{modules}, we proposed a definition of a local moduli section which has to be thought of as a substitute for the notion of versal deformation space. We prove it to be equivalent to versality for complex structures. 
\vspace{5pt}\\
We deal with this notion in Section \ref{sections}. But, before that, we will see now slightly more general minimality conditions, which appear naturally in our setting. The central idea is very simple: in a minimal local section, the repetitions (that is the subset of points encoding a fixed structure up to $G$-action) should be minimal. The case of complex structures shows that we cannot prevent repetitions, even repetitions of the base point $J_0$. It also shows that it is too much to expect these repetitions to be countable subsets {\it for all} $J\in K$, but that we should ask for this at $J_0$. In other words, one should call minimal a local section in which there is no path in $K$ starting at $J_0$ encoding a trivial deformation of $J_0$. Both the minimality conditions introduced here and that of \cite{modules} are precise statements saying that. The differences lie in defining what is a trivial deformation, and are related to the validity or not of the Fischer-Grauert property (see Section \ref{sections}).  
\vspace{5pt}\\
Let us introduce the following two minimality conditions.
\begin{description}
\item[(MC1)] Let $c:[0,\epsilon)\to G$ be a continuous path starting at $e$. For $t\in [0,\epsilon)$, define
$J(t):=\Xi(J_0\cdot c(t))$.
Then the continuous path $J$ in $K$ satisfies $J\equiv J_0$.
\end{description}

In other words, if (MC1) is fulfilled, the intersection of $K$ with the local $G$-orbit of $J_0$ does not contain any non-constant continuous path.

\begin{description}
\item[(MC2)] Up to shrinking $W$, we have
$$
g\in W,\ J_0\cdot g\in K\Longrightarrow J_0\cdot g=J_0.
$$
\end{description}

In other words, if (MC2) is satisfied, the intersection of $K$ with the local $G$-orbit of $J_0$ is equal to $\{J_0\}$.
\vspace{5pt}\\
Of course, (MC2) implies (MC1); but more is true.

\begin{proposition}\label{MC}
{\rm (MC1)}$\iff${\rm (MC2)}.
\end{proposition}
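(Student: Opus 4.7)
The plan is to prove both implications directly using the structure provided by Proposition \ref{lcexistence}, namely that $\Phi:(E^\perp\cap V)\times\tilde K\to U$ is an isomorphism.

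For the easy direction, (MC2)$\Rightarrow$(MC1), I would start with a continuous path $c:[0,\epsilon)\to G$ with $c(0)=e$. For $t$ small, $J_0\cdot c(t)\in U$ so I can apply $\Phi^{-1}$ to obtain continuous curves $\xi(t)\in E^\perp\cap V$ and $J(t)=\Xi(J_0\cdot c(t))\in K$ with
\[
J_0\cdot c(t)=J(t)\cdot\phi(\xi(t)),\qquad \xi(0)=0,\ J(0)=J_0.
\]
Solving for $J(t)$ gives $J(t)=J_0\cdot h(t)$ with $h(t):=c(t)\cdot\phi(\xi(t))^{-1}$. Since $h(0)=e$ and $h$ is continuous into $G$, for $t$ small enough $h(t)\in W$. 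Because $J(t)\in K$ by construction, (MC2) applied to $h(t)$ forces $J(t)=J_0\cdot h(t)=J_0$, which is exactly what (MC1) asserts.

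For the converse (MC1)$\Rightarrow$(MC2), I would take $g\in W$ with $J_0\cdot g\in K$ and produce a continuous path $c$ connecting $e$ to $g$ inside $W$. Concretely, shrink $V$ to a convex neighborhood of $0$ in $T$ (for instance an open ball), write $g=\phi(\xi_0)$ with $\xi_0\in V$, and set $c(t):=\phi(t\xi_0)$ for $t\in[0,1]$; this is a continuous path in $W$ with $c(0)=e$ and $c(1)=g$. By (MC1), $J(t)=\Xi(J_0\cdot c(t))\equiv J_0$. Evaluating at $t=1$ and using that $\Xi$ is a retraction onto $K$ (so $\Xi(J_0\cdot g)=J_0\cdot g$ since $J_0\cdot g\in K$) yields $J_0\cdot g=J_0$, which is (MC2) (possibly after shrinking $W$ to ensure it lies in $\phi(V)$).

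The only subtlety I anticipate is bookkeeping about shrinkings of $V$, $W$, and $U$: one has to make sure that the chart $\phi$ can be arranged so that $\phi(V)\subset W$ is convex-like enough to allow a radial path from $e$ to any $g\in W$, and that in the first implication the composition $c(t)\phi(\xi(t))^{-1}$ remains inside the $W$ where (MC2) applies. Both are standard shrinking arguments using the continuity of multiplication and inversion in $G$ and of $\Phi^{-1}$, and do not require any new hypothesis. No analytic input beyond Proposition \ref{lcexistence} is needed.
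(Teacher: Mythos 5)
Your proof is correct, and for the substantive direction it is built on the same construction as the paper's: the radial path $c(t)=\phi(t\xi_0)$ together with the fact that $\Xi$ restricts to the identity on $K$ (since $\Phi(0,J)=J$). The organizational difference is that you argue (MC1)$\Rightarrow$(MC2) directly, whereas the paper argues by contraposition: assuming (MC2) fails, it uses the countable topology of $G$ to extract a sequence $g_n\to e$ with $J_0\cdot g_n\in K\setminus\{J_0\}$, writes $g_n=\phi(\xi)$ for large $n$, and exhibits the same radial path as a non-constant path violating (MC1). Your direct version is slightly cleaner in that it makes no use of the countability of the topology of $G$ (that hypothesis is only needed elsewhere, e.g.\ in Proposition \ref{localmoduli>section}); what it costs is the explicit bookkeeping you already flag, namely shrinking $V$ to a star-shaped neighborhood so that $t\mapsto\phi(t\xi_0)$ stays in $W$ and $J_0\cdot\phi(t\xi_0)$ stays in $U$ — the paper needs the same shrinkings but leaves them implicit. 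One small caveat on the easy direction, which the paper dismisses with ``of course'': your argument literally yields $J(t)=J_0$ only for $t$ small enough that $h(t)=c(t)\phi(\xi(t))^{-1}$ lies in the shrunk $W$; to get $J\equiv J_0$ on all of $[0,\epsilon)$ one should add a routine open-closed continuation argument (at a point $t_0$ with $J(t_0)=J_0$ one has $J(t)=J_0\cdot\bigl(h(t_0)^{-1}h(t)\bigr)$ with $h(t_0)^{-1}h(t)\to e$, so (MC2) applies again). This is a minor completeness point, consistent with the germ-like nature of the statements and with the paper's own level of detail.
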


\begin{proof}
Assume that (MC2) is not satisfied. Then, since $G$ has a countable topology, we can find a sequence $(g_n)$ in $G$ converging to $e$ and such that $J_0\cdot g_n$ is in $K\setminus\{J_0\}$ for all $n$.
\vspace{5pt}\\
For $n$ big enough, $g_n$ belongs to $W$ hence $g_n=\phi(\xi)$ for some $\xi$. Set
$$
c(t)=\phi(t \xi)\quad\text{ and }\quad J(t)=\Xi(J_0\cdot c(t)).
$$
This is a continuous path in $K$ and in the local orbit of $J_0$. Besides, it is non-constant since $J(1)=J_0\cdot g_n$ is different from $J_0$. Hence (MC1) is not satisfied.
\end{proof}

Because of Proposition \ref{MC}, we will from now on refer to both (MC1) and (MC2) as the shortened (MC). 

\begin{definition}
If a local section fulfills {\rm (MC)}, we say it is {\it minimal}.
\end{definition}

We now need to define some more assumptions.

\begin{hypotheses}\hfill
\vspace{-5pt}\begin{description}
\item[(H4)] The isotropy group
$$
G_{J_0}:=\{g\in G\mid J_0\cdot g=J_0\}
$$
is a local Banach submanifold at $J_0$, that is there exists a smooth map
$$
E\cap V\buildrel\psi\over \longrightarrow G_{J_0}\cap W.
$$
\item[(H5)] The map
$$
(g,h)\longmapsto \mu(g,h)=g\circ h\in W
$$
for $g\in G_{J_0}$ and $h\in G$, is $C^1$ (that is $C^1$ in the chart (\ref{Gcarte})), with differential at $(e,e)$ equal to
$$
(\xi,\eta)\in E\times T\longmapsto \xi+\eta\in T.
$$
\end{description}
\end{hypotheses} 

\begin{remark}
We emphasize that (H5) means that the composition of elements $g$ and $h$ of $G$ is $C^1$, {\it only when $g$ belongs to the isotropy group of $J_0$}. In many cases, for example when $G$ is the group of diffeomorphisms of class $C^k$ or of Sobolev class $W^l$, the composition map in the group is only continuous (see the proof of Proposition \ref{Pelliptic}). So for (H5) to be fulfilled, we need that $G_{J_0}$ contains only diffeomorphisms of higher regularity, as it is the case in both examples of Section \ref{examples}.
\end{remark}

We have now

\begin{proposition}
\label{MCproof}
Assume {\rm (H1), (H2), (H3), (H4)} and {\rm (H5)}. Then the local section $K$ of Proposition \ref{lcexistence} is minimal.
\end{proposition}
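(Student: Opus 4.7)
The plan is to verify (MC2) directly: after possibly shrinking $W$, any $g\in W$ with $J_0\cdot g\in K$ must satisfy $J_0\cdot g=J_0$. The strategy is to factor every $g$ near the identity as a product of an element of the isotropy group $G_{J_0}$ and an element of the ``transversal slice'' $\phi(E^\perp\cap V)$, and then to exploit the uniqueness built into Proposition \ref{lcexistence}.

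First I would construct the factorization. Using the parametrization $\psi$ of (H4) and the chart $\phi$ from (\ref{Gcarte}), define
\[
\Psi : (E\cap V)\times (E^\perp\cap V)\longrightarrow G,\qquad \Psi(\alpha,\eta):=\psi(\alpha)\circ\phi(\eta).
\]
Read through $\phi^{-1}$, this becomes a map into $T$. Hypothesis (H5) is exactly what is needed to ensure that $\phi^{-1}\circ\Psi$ is of class $C^1$: without restricting the first factor to $G_{J_0}$, composition in $G$ is typically only continuous. Combining the differential formula for $\mu$ in (H5) with the fact that $d\psi(0)$ is the inclusion $E\hookrightarrow T$ (implicit in (H4), since by (H2) we have $E=\ker P$, which is exactly the formal tangent space of $G_{J_0}$ at $e$), the differential of $\phi^{-1}\circ\Psi$ at $(0,0)$ is $(\alpha,\eta)\mapsto \alpha+\eta$. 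By (H1) one has $T=E\oplus E^\perp$, so this differential is an isomorphism, and the $C^1$ inverse function theorem turns $\Psi$ into a $C^1$-diffeomorphism onto an open neighborhood of $e\in G$.

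I would then conclude as follows. Shrink $W$ so that every $g\in W$ admits a unique factorization $g=\psi(\alpha)\circ\phi(\eta)$ with $\alpha\in E\cap V$ and $\eta\in E^\perp\cap V$, and so that $J_0\cdot g\in U$. Since $\psi(\alpha)\in G_{J_0}$,
\[
J_0\cdot g = \bigl(J_0\cdot\psi(\alpha)\bigr)\cdot\phi(\eta) = J_0\cdot\phi(\eta) = \Phi(\eta,J_0).
\]
If in addition $J_0\cdot g\in K\subset\tilde K$, then we also have $J_0\cdot g=(J_0\cdot g)\cdot\phi(0)=\Phi\bigl(0,J_0\cdot g\bigr)$. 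Because $\Phi$ is an isomorphism by Proposition \ref{lcexistence}, its preimages are unique; comparing the two expressions forces $\eta=0$ and $J_0\cdot g=J_0$. This is (MC2), equivalent to (MC) by Proposition \ref{MC}.

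The only delicate step is the $C^1$ regularity of $\Psi$. The entire architecture of the hypotheses is calibrated so that it goes through: (H5) is the surprising analytic condition highlighted in Section \ref{intro}, tailored precisely to the situations where $G_{J_0}$ consists only of elements of strictly higher regularity than a generic element of $G$ (holomorphic automorphisms, isometries, etc.). Once this $C^1$ factorization is available, the rest is a formal consequence of the isomorphism statement in Proposition \ref{lcexistence}.
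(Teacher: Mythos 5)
Your proof is correct and follows essentially the same route as the paper: both factor $g\in W$ as $\psi(\xi)\circ\phi(\chi)$ via the inverse function theorem applied to the map furnished by (H4)--(H5), and then conclude $J_0\cdot g=J_0$ from the uniqueness in Proposition \ref{lcexistence} (the paper phrases this last step through the retraction $\Xi$, you through injectivity of $\Phi$, which is the same thing).
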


\begin{proof}
We shall prove (MC2). As permitted by assumption (H4), let $\psi$ be a local chart of $G_{J_0}$ at $e$. We assume that its differential at $0$ (written in the chart (\ref{Gcarte})) is the identity. Then, shrinking $V$ and $W$ if necessary, we may assume that the map
\begin{equation}
\label{multi}
(\xi,\chi)\in (E\cap V)\times (E^\perp\cap V)\longmapsto \mu (\psi(\xi),\phi(\chi))\in W
\end{equation}
is an isomorphism, since it is a smooth map by (H5) and since its differential at $(0,0)$ is the identity.
\vspace{5pt}\\
Let $g\in W$ be such that $J_0\cdot g\in K$. Then $g$ can be written as $\psi(\xi)\circ\phi(\chi)$ and we have
$$
J_0\cdot g=J_0\cdot (\psi(\xi)\circ\phi(\chi))=J_0\cdot\phi(\chi)
$$
the last equality coming from the fact that $\psi(\xi)$ belongs to the isotropy group of $J_0$.
\vspace{5pt}\\
But 
$$
\Xi(J_0\cdot\phi(\chi))=J_0=\Xi(J_0\cdot g)=J_0\cdot g
$$
the last equality coming from the fact that $J_0\cdot g$ is supposed to be in $K$. Hence
$J_0\cdot g=J_0$ and (MC2) is verified.
\end{proof}

\subsection{Local moduli sections}
\label{sections}

In \cite{modules}, we proposed a definition of a local moduli section. It says that $K$ is a local moduli section if there exists an smooth retraction from a neighborhood of $J_0$ in $\mathcal I$ onto $K$ (condition A1) and if any smooth path $J : [0,\epsilon)\to K$ starting at $J_0$ and all of whose points are equivalent to $J_0$ is indeed constant (condition A2).
\vspace{5pt}\\
We want to compare this definition with that of a minimal local section. First, observe that condition A1 is always fulfilled, even for a general local section, because of the existence of (\ref{retraction}). Then, notice that smoothness is not a problem, since replacing continuous by smooth in (MC1) yields an equivalent condition. This is due to the fact that a continuous path in a Banach space can be approximated by smooth ones.
\vspace{5pt}\\
Also, condition A2 obviously implies (MC1). However, the converse is only true if we have:

\begin{description}
\item[(FG) property] Given 
$J : t\in [0,\epsilon)\to J_t\in K$ 
a smooth path of structures with $J(0) = J_0$ and all equivalent to $J_0$, there exists a smooth path 
$c : t\in [0,\epsilon)\to c(t)\in G$ with $c(0) = e\in G$ and
such that $J(t)=J_0\cdot c(t)$ for all $t$.
\end{description}

Of course, (FG) stands for Fischer-Grauert. Define a smooth deformation of $J_0$ as
a smooth map from a smooth base manifold $B$ to $K$ sending a base point $0$ onto $J_0$, and a trivial deformation of $J_0$ as a smooth deformation $J : b\in B\mapsto J_0\cdot c(b)$ for $c$ a smooth map from $B$ to $G$. Then, these definitions are consistent with those used for complex structures (cf. \cite{circle}) and (FG) property can be rephrased as: a smooth deformation of $J_0$ all of whose points are equivalent to $J_0$ is trivial. This is exactly the (smooth version) of Fischer-Grauert's Theorem (see \cite{PS}).
\vspace{5pt}\\
To sum up, we have

\begin{proposition}
\label{MC+FG}
A minimal local section is a local moduli section if and only if {\rm (FG)} property is true.
\end{proposition}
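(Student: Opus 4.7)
The plan is to reduce the equivalence to a single claim about smooth paths, exploiting the retraction $\Xi$ together with (MC1) in one direction, while observing that the other direction is essentially vacuous.

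First I would record the reductions already made in the text preceding the statement: condition A1 is automatic for any local section thanks to the smooth retraction $\Xi$ defined in \eqref{retraction}, and the continuous version of (MC1) is equivalent to its smooth version (because continuous paths in a Banach space can be approximated by smooth ones, and the maps in sight are smooth). So, under the standing assumption that $K$ is a minimal local section, the proposition reduces to proving the equivalence A2 $\Longleftrightarrow$ (FG).

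For the implication (FG) $\Longrightarrow$ A2, I would take a smooth path $J : [0,\epsilon) \to K$ with $J(0) = J_0$ and $J(t) \sim J_0$ for every $t$. Applying (FG) produces a smooth path $c : [0,\epsilon) \to G$ with $c(0) = e$ and $J(t) = J_0 \cdot c(t)$ for all $t$; shrinking $\epsilon$ if necessary we may assume $c(t) \in W$. Since each $J(t)$ lies in $K$ and $\Xi$ is a retraction onto $K$, we have
$$
J(t) = \Xi(J(t)) = \Xi(J_0 \cdot c(t)),
$$
and the smooth version of (MC1) applied to $c$ forces this path to be constantly equal to $J_0$. Hence $J \equiv J_0$, which is exactly A2.

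The converse A2 $\Longrightarrow$ (FG) is essentially a matter of bookkeeping: any smooth path $J : [0,\epsilon) \to K$ with $J(0) = J_0$ and all $J(t)$ equivalent to $J_0$ satisfies the hypothesis of A2, so $J \equiv J_0$, and then the constant path $c \equiv e$ in $G$ furnishes the required smooth lift. I do not anticipate a substantial obstacle anywhere in the argument; the main step is the forward direction, and its interest is conceptual rather than technical, since it isolates exactly the point where the Fischer--Grauert property intervenes to bridge the gap between the new minimality notion (MC) and the stronger definition of local moduli section from \cite{modules}.
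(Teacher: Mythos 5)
Your proof is correct and follows essentially the same route as the paper, which establishes the proposition through the discussion preceding it: condition A1 holds automatically via the retraction \eqref{retraction}, the smooth/continuous versions of (MC1) agree, (FG) combined with minimality yields A2 exactly as in your forward direction, and A2 forces every relevant path in $K$ to be constant so that the trivial lift $c\equiv e$ gives (FG). The only point you gloss over (as does the paper) is that the constancy argument is a priori local near $t=0$, which is harmless since all statements are about germs and extend to $[0,\epsilon)$ by a standard connectedness argument.
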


To rephrase this proposition, minimality states that there is no non-constant path in $K$ encoding a trivial deformation of $J_0$, whereas condition A2 means that there is no non-constant path all of whose points are equivalent to $J_0$.
\vspace{5pt}\\
In a setting where (FG) property is true (as in the case of complex structures), both definitions are the same. However, in a setting where (FG) property is not true, then the good definition to take is that of minimality, the philosophy being that a path all whose points encode the same structure but which is not a trivial deformation encodes important information and cannot be removed from the local section. Indeed, if (FG) property is not true, there cannot exist a local moduli section.
\vspace{5pt}\\
We may now state
\begin{definition}
 \label{Ktype}
 A minimal local section at $J_0$ is also called {\it a Kuranishi type space} of $J_0$.
\end{definition}

Proposition \ref{complexe} should justify this terminology.

\begin{remark}
 To be more precise, a local moduli section as defined in \cite{modules} is supposed to be an analytic space in the sense of \cite{Douady}. In this section, we drop this requirement. However, notice that the analytic structure is important to prove \cite[Proposition 4.1]{modules}, see Section \ref{examples} and remark \ref{Douadyreduced}.
\end{remark}

\subsection{Local moduli space}
\label{localmoduli}

To finish with this part, we want to compare the previous definitions with the more classical one of local moduli space. Recall the

\begin{definition}
\label{lms}
A local section $K$ is {\it a local moduli space} if there exists an open neighborhood $W$ of $e$ in $G$ such that
$$
J\in K,\ g\in W,\ J\cdot g\in K\Longrightarrow J\cdot g=J.
$$
\end{definition}

This is stronger than the notions of minimal local section and local moduli section. Indeed,

\begin{proposition}
\label{localmoduli>section}
Assume that $K$ is a local moduli space. Then $K$ is a local moduli section at any point $J_0\in K$.
\end{proposition}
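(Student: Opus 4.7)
\smallskip
\noindent\textbf{Plan.} The plan is to verify the two defining conditions (A1) and (A2) of a local moduli section at an arbitrary point $J_0 \in K$.

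For (A1), I observe that the smooth retraction $\Xi$ of \eqref{retraction}, attached to the local section $K$ by Proposition \ref{lcexistence}, is already defined on a neighborhood of $J_0$ in $\mathcal I$ and restricts to the identity on $K$. Hence no additional construction is needed: $\Xi$ itself supplies the required smooth retraction at $J_0$, and (A1) is immediate.

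The heart of the proof is (A2). Given a smooth path $J\colon[0,\epsilon)\to K$ with $J(0)=J_0$ and $J(t)\sim J_0$ for every $t$, I want to conclude that $J$ is constant near $0$. The driving idea is simple: by assumption, for each $t$ one has $J(t)=J_0\cdot g_t$ for some $g_t\in G$, and \emph{if $g_t$ can be chosen in $W$ for $t$ small enough}, then the local moduli space condition applied at $J_0\in K$ with $g=g_t\in W$ and $J_0\cdot g_t=J(t)\in K$ forces $J_0\cdot g_t=J_0$, i.e. $J(t)=J_0$. The whole problem therefore reduces to upgrading the a priori global equivalence $g_t\in G$ into a local one, $g_t\in W$, for $t$ near $0$.

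To do so I would exploit the standing hypotheses and the smoothness of the path. The decomposition used in the proof of Proposition \ref{MCproof} (based on (H4) and (H5)) identifies the local orbit $J_0\cdot W$ with the image of $E^\perp\cap V$ under the smooth immersion $\chi\mapsto J_0\cdot\phi(\chi)$, so $J_0\cdot W$ is a submanifold of $\mathcal E$ near $J_0$ with tangent space $F$ at $J_0$. Since $J(t)$ is smooth and $K$ is tangent to $F^\perp$ at $J_0$, one can then show that $J(t)$ does lie in $J_0\cdot W$ for small $t$, yielding the desired $g_t\in W$. A clean way to organize this is to consider the set $T:=\{t\in[0,\epsilon):J(t)=J_0\}$, which is closed and contains $0$: the lifting argument combined with the local moduli space condition shows that $T$ is also open, so $T=[0,\epsilon)$ and $J$ is constant.

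The main obstacle I foresee is precisely this lifting step, i.e.\ ruling out that the path $J(t)$, which lives in the possibly larger global orbit $J_0\cdot G$, could escape the local orbit $J_0\cdot W$. Once the lift $g_t\in W$ is secured, the local moduli space condition closes the argument in one line, and (A1)+(A2) together give the desired conclusion that $K$ is a local moduli section at $J_0$.
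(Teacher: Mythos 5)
There is a genuine gap at the step you yourself flag as the main obstacle: the upgrade of the global equivalence $g_t\in G$ to a local one $g_t\in W$ is never established, and the sketch you give for it does not work. Knowing that the local orbit $J_0\cdot W$ is (under (H4)--(H5)) a submanifold tangent to $F$ and that $K$ is tangent to $F^\perp$ says nothing about whether the points $J(t)$, which are equivalent to $J_0$ via \emph{a priori large} group elements, lie in $J_0\cdot W$; the whole phenomenon of repetitions (e.g.\ the Hirzebruch surface example in the introduction) is precisely that the global orbit can return to $K$ arbitrarily close to $J_0$ without the returning points being of the form $J_0\cdot g$ with $g\in W$. The same missing lift is what you would need at every point of your set $T$ to prove openness, so the open--closed argument does not close. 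In addition, your route invokes (H4) and (H5) through Proposition \ref{MCproof}, which are not among the hypotheses of Proposition \ref{localmoduli>section}: the statement assumes only that $K$ is a local moduli space in the sense of Definition \ref{lms}.

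The paper's proof avoids the lifting problem altogether by exploiting two things you do not use. First, $G$ has countable topology (part of the standing setting), so along the open set $I\subset[0,\epsilon)$ where the smooth non-constant path $J$ is locally injective, the uncountable family $(g_t)_{t\in I}$ must have an accumulation point; one finds $t_\infty\in I$ and $t_n\to t_\infty$, $t_n\ne t_\infty$, with $g_{t_n}\to g_{t_\infty}$. Second, Definition \ref{lms} is uniform over \emph{all} points of $K$, not just $J_0$: for $n$ large the relative element $(g_{t_\infty})^{-1}\circ g_{t_n}$ lies in $W$, and applying the local moduli space condition at the point $J(t_\infty)\in K$ gives $J(t_n)=J(t_\infty)$, contradicting local injectivity at $t_\infty$. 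So instead of forcing a small group element based at $J_0$ (which may not exist), the argument manufactures a small \emph{relative} element based at a well-chosen point of the path. If you want to salvage your approach, this is the mechanism to adopt; as written, your proof establishes (A1) but not (A2).
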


\begin{remark}
\label{universel}
If we compare with the vocabulary of Kodaira-Spencer theory, a local moduli space is a universal space, whereas a minimal local section has to be thought of as a versal space
(cf. Proposition \ref{complexe} and the discussion in subsection \ref{sections}).
\end{remark}

\begin{proof}
Take $J_0$ in $K$. Obviously, $K$ is minimal, since (MC2) condition is nothing else than the condition in definition \ref{lms} applied to $J_0$. 
\vspace{5pt}\\
Moreover, let 
$$
	J : t\in [0,\epsilon ) \longrightarrow J_t\in K
$$
be a smooth path all of whose points are equivalent to $J_0$ and with $J(0) = J_0$. For every $t$, we may choose an element $g_t$ of G such that $J(t)=J_0\cdot g_t$. Assume $J$ is non-constant. Since it is smooth, it is locally injective at each point of an open subset $I$ of $[0,\epsilon )$. Moreover, we may assume that $J$ is not constant on any interval containing $0$ without any loss of generality. Hence, we have that $0$ belongs to the closure of $I$. Since $I$ is uncountable whereas $G$ has a countable topology, the uncountable family $((g_t)_{t\in I})$ of $G$ must have an accumulation point. The same trick shows that there exists such an accumulation point arbitrarily close to $0$ that can be assumed to belong to $I$. All that says that we can find $t_\infty\in I$ arbitrarily close to $0$ and $t_n\to t_\infty$ with all $t_n$ different from $t_\infty$ and with $g_{t_n}$ converging to $g_{t_\infty}$.
\vspace{3pt}\\
For $n$ big enough, $(g_{t_\infty})^{-1}\circ g_{t_n}$ belongs to $W$ and 
$$
J(t_\infty)\cdot \left ((g_{t_\infty})^{-1}\circ g_{t_n}\right )=J_0\cdot g_{t_n}=J(t_n)\in K
$$
so we have $J(t_n)=J(t_\infty)$, contradicting local injectivity at $t_\infty$.
\end{proof}

Let us give additional hypotheses under which we have a local moduli space. 

\begin{hypotheses}\hfill
\vspace{-5pt}\begin{description}
\item[(H4')] For all $J\in K$, its isotropy group $G_J$ is a local Banach submanifold with tangent space at identity $E_J$ isomorphic to $E$.
Moreover, there exists a smooth map $\psi : U\times (E\cap V)\to G$ such that, for all $J\in K$, 
$$
\psi (J,E\cap V)=G_J\cap W
$$
and
$$
\psi(J,0)\equiv e.
$$
\item[(H5')] 
The map 
$$
(J,\xi,h)\in\tilde K\times E\cap V\times W \longmapsto \psi(J,\xi)\circ h\in G
$$
is $C^1$ (that is $C^1$ in the chart (\ref{Gcarte})), and the differential at $(0,e)$ of
$$
(\xi,h)\longmapsto \psi(J_0,\xi)\circ h
$$
is equal to
$$
(\xi,\eta)\in E\times T\longmapsto \xi+\eta\in T.
$$
\end{description}
\end{hypotheses}

\begin{remark}
Set 
$$
\mathcal C=\{(J,\xi)\in K\times V\mid \xi\in E_J\}.
$$
Then the natural projection map $\mathcal C\to K$ satisfies
$$
\begin{CD}
\mathcal C@>\psi^{-1}>> K\times (E\cap V)\\
@VVV @VV\text{1st projection}V\\
K@=K
\end{CD}
$$
up to shrinking $K$ and $V$. If $\mathcal C$ and $K$ are Banach $\mathbb C$-analytic spaces, then this is exactly saying that $\mathcal C\to K$ is a smooth morphism (cf. \cite[p.28]{Douady}).
\end{remark}

\begin{remark}
(H4'), respectively (H5'), 
is stronger than (H4), respectively (H5), 
 and is meant to replace it.
\end{remark}

Then, we have

\begin{proposition}
\label{LMS}
Assume {\rm (H1), (H2), (H3), (H4')} and {\rm (H5')}. Then $K$ is a local moduli space.
\end{proposition}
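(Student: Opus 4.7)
The plan is to upgrade the pointwise argument of Proposition \ref{MCproof} to a uniform statement over $J$ varying in a neighborhood of $J_0$ in $K$. Hypotheses (H4') and (H5') are tailor-made for this: (H4') replaces the single isotropy chart of (H4) by a smooth family $\psi(J,\cdot)$ parametrized by $J\in\tilde K$, and (H5') strengthens (H5) so that $C^1$-regularity of composition holds jointly in the parameter $J$. Combined with (H1)--(H3), these should yield a single neighborhood $W$ of $e$ in $G$ witnessing Definition \ref{lms} at every $J\in K$, after suitably shrinking $K$.

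The technical heart is a parametric inverse function theorem. Read in the chart $\phi$, the map
$$
(J,\xi,\chi)\in \tilde K\times (E\cap V)\times (E^\perp\cap V)\longmapsto \psi(J,\xi)\circ\phi(\chi)\in G
$$
is $C^1$ by (H5'), and by (H5') and (H1) its partial differential in $(\xi,\chi)$ at $(J_0,0,0)$ is the Banach isomorphism $(\xi,\chi)\in E\oplus E^\perp\mapsto \xi+\chi\in T$. To obtain the factorization uniformly in $J$, I would apply the ordinary inverse function theorem to the enlarged map $(J,\xi,\chi)\mapsto (J,\psi(J,\xi)\circ\phi(\chi))$, whose full differential at $(J_0,0,0)$ is the block-triangular isomorphism $(\dot J,\xi,\chi)\mapsto (\dot J,D_J\dot J+\xi+\chi)$. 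Choosing a product box $U_0\times W$ inside the resulting image neighborhood yields open sets $U_0\ni J_0$ in $\tilde K$ and $W\ni e$ in $G$ such that, for every $J\in U_0$, each $g\in W$ factors as $g=\psi(J,\xi)\circ\phi(\chi)$ for some $(\xi,\chi)\in (E\cap V)\times (E^\perp\cap V)$. Shrinking $K$ to $K\cap U_0$ gives the required uniform decomposition.

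The conclusion now mimics the end of Proposition \ref{MCproof}. For $J\in K$ and $g\in W$ with $J\cdot g\in K$, I factor $g=\psi(J,\xi)\circ\phi(\chi)$; since $\psi(J,\xi)\in G_J$ by (H4'), it fixes $J$, hence
$$
J\cdot g=J\cdot\phi(\chi)=\Phi(\chi,J),
$$
where $\Phi$ is the map of Proposition \ref{lcexistence}. Applying the retraction $\Xi=(\Phi^{-1})_2$ yields $\Xi(J\cdot g)=J$. On the other hand, $\Phi(0,J')=J'$ for every $J'\in K$ shows $\Xi|_K=\id_K$, so $J\cdot g\in K$ gives $\Xi(J\cdot g)=J\cdot g$; comparing gives $J\cdot g=J$, which is Definition \ref{lms}. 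The main obstacle is the uniformity step, i.e.\ extracting a single $W$ for all $J\in K$ close to $J_0$; (H5') is precisely what buys it, and the trick of adjoining $J$ on both sides of the map before invoking the inverse function theorem reduces it to the standard non-parametric statement.
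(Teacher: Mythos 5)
Your proposal is correct and follows essentially the same route as the paper: the paper's proof introduces exactly the enlarged map $(J,\xi,\chi)\mapsto (J,\psi(J,\xi)\circ\phi(\chi))$, checks via (H4'), (H5') that its differential at $(J_0,0,0)$ is an isomorphism (noting the second component has no $J$-derivative since $\psi(J,0)\equiv e$), invokes the inverse function theorem to get the uniform factorization onto $\tilde K\times W$, and then concludes from $J\cdot g=J\cdot\phi(\chi)=\Phi(\chi,J)$ together with the isomorphism \eqref{Kuranishi} that $\phi(\chi)=e$ and $J\cdot g=J$. Your use of the retraction $\Xi$ in the last step is just a repackaging of the paper's appeal to the injectivity of $\Phi$ via \eqref{retraction}, so the two arguments coincide in substance.
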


\begin{proof}
This is close to the proof of proposition \ref{MCproof}. We need a sort of uniform version of it. 
Let
\begin{equation}
\label{uniform}
 (J,\xi,\chi)\in\tilde K\times (V\cap (E\times E^\perp))\longmapsto (J,\psi(J,\xi)\circ\phi(\chi))\in \tilde K\times G.
\end{equation}
By (H4') and (H5'), this is a smooth map. 
By (H5'), its differential at $(J_0,0,0)$ is equal to
$$
(\omega, \xi,\chi)\in T_{J_0}\tilde K\times E\times E^\perp\longmapsto (\omega,\xi\oplus\chi)\in T_{J_0}K\times T
$$
so is an isomorphism. There is no $\omega$ term in the second component because of the condition $\psi(J,0)\equiv e$. Up to shrinking $\tilde K$, $V$ and $W$, we may assume that (\ref{uniform}) is an isomorphism onto $\tilde K\times W$. 
Then, take
$g\in W$ and $J\in K$. We may write $g$ as $\psi(J,\xi)\circ \phi(\chi)$ using (\ref{uniform}). And we have
$$
J\cdot \left (\psi(J,\xi)\circ \phi(\chi)\right )=J\cdot\phi(\chi)
$$
since $\psi(J,\xi)$ belongs to $G_J$.
\vspace{5pt}\\
Assume now that $J\cdot g\in K$, then, since $J\in K$ and $g\in W$, by (\ref{Kuranishi}) and (\ref{retraction}), we have
$$
\phi(\chi)=e\quad\text{ and }\quad J\cdot g=J.
$$
as claimed.
\end{proof}

Observe that conditions (H4'), (H5') are satisfied in the following case.

\begin{corollary}
\label{H0=0}
Assume that for all $J\in K$, we have $G_J\cap W$ reduced to $e$. Then $K$ is a local moduli space.
\end{corollary}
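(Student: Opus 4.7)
The plan is to verify that the strengthened hypotheses (H4') and (H5') are trivially satisfied under the assumption $G_J \cap W = \{e\}$ for all $J \in K$, and then invoke Proposition \ref{LMS} directly.

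The first step is to observe that the hypothesis forces $E = \{0\}$. Indeed, (H4) furnishes a chart $\psi : E \cap V \to G_{J_0} \cap W$ presenting the isotropy group locally as a Banach submanifold modelled on $E$; since the target is the single point $\{e\}$ by assumption, the source must be reduced to $\{0\}$. Equivalently, $E^\perp = T$ and the operator $P$ of (H2) is injective.

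Next I would construct the map required by (H4'). For every $J \in K$, the set $G_J \cap W = \{e\}$ is a zero-dimensional Banach submanifold of $G$ with tangent space $E_J = \{0\}$, trivially isomorphic to $E = \{0\}$. Define
\[
\psi : \tilde K \times (E \cap V) = \tilde K \times \{0\} \longrightarrow G, \qquad \psi(J, 0) := e.
\]
This constant map is smooth and satisfies $\psi(J, E \cap V) = \{e\} = G_J \cap W$ together with $\psi(J, 0) \equiv e$, so (H4') holds. For (H5'), the composition $(J, \xi, h) \mapsto \psi(J, \xi) \circ h$ collapses to $(J, h) \mapsto h$, which is visibly smooth; its differential at $(0, e)$ sends $(\xi, \eta) \in E \times T = \{0\} \times T$ to $\eta = \xi + \eta$, matching the prescribed form.

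Hypotheses (H1), (H2), (H3) are in force throughout the subsection, since the local section $K$ has already been constructed via Proposition \ref{lcexistence}. Proposition \ref{LMS} therefore applies and produces the conclusion. The only subtle point along the way is the deduction $E = \{0\}$ from the assumption on isotropy groups, which rests on reading (H4) as a genuine submanifold chart rather than merely a smooth parametrization; once this is granted, everything reduces to bookkeeping. As a sanity check, one may bypass Proposition \ref{LMS} entirely: when $E = \{0\}$ the Kuranishi isomorphism $\Phi$ of Proposition \ref{lcexistence} is defined on $V \times \tilde K$, so for $J \in K$ and $g \in W$ the structure $J \cdot g$ equals $\Phi(\chi, J)$ for a unique $\chi$, and if $J \cdot g$ also belongs to $K \subset \tilde K$ then the alternative decomposition $J \cdot g = \Phi(0, J \cdot g)$ and the injectivity of $\Phi^{-1}$ force $\chi = 0$, hence $g = e$ and $J \cdot g = J$.
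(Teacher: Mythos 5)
Your proposal is correct, and its core is the same as the paper's (very terse) proof: take $\psi$ to be the constant map onto $e$, check that (H4') and (H5') become trivial, and invoke Proposition \ref{LMS}. The difference is that you make explicit a point the paper leaves implicit: for the constant $\psi$ to satisfy (H5') — whose differential condition reads $(\xi,\eta)\mapsto\xi+\eta$ on $E\times T$ — and for the tangent spaces $E_J=\{0\}$ in (H4') to be isomorphic to $E$, one needs $E=\{0\}$, and you correctly derive this from the trivial-isotropy hypothesis by reading (H4) as a genuine chart of $G_{J_0}\cap W$ modelled on $E$ (the paper's one-line proof silently presupposes exactly this). This is a welcome clarification rather than a deviation; note only that (H4) is not listed among the corollary's hypotheses, so strictly speaking you are supplying the reading under which the statement is meant. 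Your closing sanity check is also a genuinely different and more elementary route: once $E=\{0\}$, so $E^\perp=T$, the injectivity of the Kuranishi isomorphism $\Phi$ of Proposition \ref{lcexistence} applied to the two decompositions $J\cdot g=\Phi(\chi,J)=\Phi(0,J\cdot g)$ gives $\chi=0$, hence $g=e$ and $J\cdot g=J$ directly, bypassing Proposition \ref{LMS} and in fact showing that $E=\{0\}$ alone already yields a local moduli space; what Proposition \ref{LMS} buys in general is the treatment of positive-dimensional isotropy via the family $\psi(J,\cdot)$, which is simply not needed here.
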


\begin{proof}
Just take $\psi$ as the constant mapping onto $e$. (H4') and (H5') are trivially satisfied.
\end{proof}
 
Let us sum up all the previous result in the following Theorem.

\begin{theorem}
\label{main}
Let $X$ be a compact smooth manifold. Consider $\mathcal E$, $\mathcal I$ and $G$ as in Section {\rm \ref{GS}}. Let $J_0\in\mathcal I$. Assume {\rm (H1), (H2)} and {\rm (H3)}.
\begin{enumerate}
\item There exists a local section $K$ to the $G$-action and {\rm (\ref{Kuranishi})} is an isomorphism. 
\item Assume {\rm (H4)} and {\rm (H5)}. Then $K$ is a Kuranishi type space. And it is a local moduli section if and only if {\rm (FG)} property is true.
\item Assume {\rm (H4')} and {\rm (H5')}. Then $K$ is a local moduli space.
\end{enumerate}
\end{theorem}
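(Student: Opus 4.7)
The statement of Theorem \ref{main} is a compilation of the three preceding propositions, so my plan is simply to assemble them and point to where each ingredient comes from, rather than redo any analysis.

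For part (1), I would invoke Proposition \ref{lcexistence} verbatim. Hypotheses (H1)--(H3) are exactly its assumptions, and its conclusion is precisely that the map $\Phi(\xi,J)=J\cdot\phi(\xi)$ from $E^\perp\cap V\times\tilde K$ to $U$ is an isomorphism at $(0,J_0)$. Intersecting $\tilde K$ with $\mathcal I$ to obtain $K$, and using the $G$-invariance of $\mathcal I$ as already noted after Proposition \ref{lcexistence}, gives the desired local section and the isomorphism \eqref{Kuranishi}.

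For part (2), the local section $K$ produced in part (1) satisfies the Kuranishi-type (i.e.\ minimality) property by Proposition \ref{MCproof}, whose assumptions are exactly (H1)--(H5). Thus $K$ is minimal, i.e.\ a Kuranishi type space in the sense of Definition \ref{Ktype}. The equivalence between being a local moduli section and minimality under the Fischer--Grauert property is the content of Proposition \ref{MC+FG}: a minimal local section is a local moduli section if and only if (FG) holds. Combining these two facts yields the statement.

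For part (3), we apply Proposition \ref{LMS}, whose hypotheses are exactly (H1), (H2), (H3), (H4'), (H5'), and whose conclusion is that $K$ is a local moduli space in the sense of Definition \ref{lms}.

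Since each of the three propositions has already been proved, there is no real obstacle here; the only thing to be mindful of is that the shrinkings of $V$, $W$, $U$, $\tilde K$ performed successively in Proposition \ref{lcexistence}, Proposition \ref{MCproof} and Proposition \ref{LMS} are compatible, which is clear as each is a finite further shrinking of open neighborhoods of $0$, $e$, $J_0$. Thus the proof of Theorem \ref{main} consists of a single line for each item, citing the corresponding proposition.
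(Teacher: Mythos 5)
Your proposal matches the paper exactly: the theorem is stated there as a summary ("Let us sum up all the previous results"), with part (1) being Proposition \ref{lcexistence}, part (2) following from Propositions \ref{MCproof} and \ref{MC+FG}, and part (3) from Proposition \ref{LMS}. Your assembly, including the remark on compatible shrinkings, is correct and is essentially the paper's own (implicit) argument.
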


\subsection{Local moduli space and quotient stacks}
\label{stacks}

In the classical case of complex structures, one can get rid of the notion of versality by using stacks. More precisely, given $X_0$ a compact complex manifold and $\text{Kur}$ its germ of Kuranishi space, then each $1$-parameter subgroup of $\text{Aut}^0(X)$ acts on $\text{Kur}$ and the quotient stack $[\text{Kur}/\text{Aut}^0(X)]$ is universal (cf. remark \ref{universel}). \vspace{5pt}\\
In our general setting, assuming the existence of a Kuranishi type space $K$ for a structure $J_0$, each $1$-parameter subgroup of its isotropy group acts on the germ of $K$ at $J_0$ via the formula
$$
(g,J)\in G_{J_0}\times K\longmapsto \Xi (J\cdot g)\in K.
$$
We can form the quotient stack $[K/G_{J_0}]$. Its is natural to call it a {\it local moduli stack} since the associated topological quotient satisfies the requirement of Definition \ref{lms}. One of the interest of this approach is that the functorial description of this stack should provide the good notion of a flat family of structures. Another interest is that it is the first step in the concrete description of a Teichm\"uller space as a stack, as it is done in \cite{Teich} for complex structures.\vspace{5pt}\\
Giving a precise description of such a quotient stack should be possible at least in the case where $K$ is an analytic space and $G_{J_0}$ is a complex Lie group.

\subsection{Rigidity}
From Theorem \ref{main}, we may easily deduce a rigidity result in the spirit of that saying that a compact complex manifold $X$ with first cohomology group with values in the sheaf of holomorphic vector fields $H^1(X,\Theta)$ being zero is rigid.
\vspace{5pt}\\
As in Section \ref{sections}, define a smooth (or holomorphic) deformation of $J_0$ as
a smooth/holomorphic map from a smooth/holomorphic base manifold $B$ to $K$ sending a base point $0$ onto $J_0$; and a trivial deformation of $J_0$ as a deformation $J : b\in B\mapsto J_0\cdot c(b)$ for $c$ a map from $B$ to $G$.
\vspace{5pt}\\
Then, rigidity is defined classically as follows.
\begin{definition}
The structure $J_0$ is called {\it rigid} if every germ of deformation of $J_0$ is isomorphic to a trivial deformation.
\end{definition}

\begin{corollary}
Assume {\rm (H1), (H2)} and {\rm (H3)}. Assume that $K$ is reduced to a point. Then $J_0$ is rigid.
\end{corollary}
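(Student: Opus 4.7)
The plan is to use the local isomorphism from Proposition \ref{lcexistence} to straighten an arbitrary germ of deformation into a trivial one.

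First, by hypotheses (H1), (H2), (H3), Proposition \ref{lcexistence} provides a submanifold $\tilde K\subset U$ through $J_0$ tangent to $F^\perp$, together with the smooth map
$$
\Phi(\xi,J)=J\cdot\phi(\xi),\qquad (\xi,J)\in (E^\perp\cap V)\times\tilde K,
$$
which is an isomorphism onto an open neighbourhood of $J_0$ in $\mathcal E$. Restricting to integrable structures and using the $G$-invariance of $\mathcal I$, we obtain an isomorphism
$$
\Phi : (E^\perp\cap V)\times K\longrightarrow U\cap\mathcal I,
$$
with $K=\tilde K\cap\mathcal I$. The assumption $K=\{J_0\}$ then forces every integrable structure close to $J_0$ to have a unique representation $J=J_0\cdot\phi(\xi)$ with $\xi\in E^\perp\cap V$; in particular all such $J$ are equivalent to $J_0$.

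Second, I take a germ of deformation $J : B\to\mathcal I$ of $J_0$, so $J(0)=J_0$ and $J$ is smooth (or holomorphic). Shrinking $B$, we may assume $J(B)\subset U\cap\mathcal I$. The composition
$$
\xi := \mathrm{pr}_1\circ\Phi^{-1}\circ J : B\longrightarrow E^\perp\cap V
$$
is smooth (or holomorphic) with $\xi(0)=0$. Setting $c(b):=\phi(\xi(b))$ yields a continuous map $c : B\to G$ with $c(0)=e$ and
$$
J(b)=\Phi(\xi(b),J_0)=J_0\cdot\phi(\xi(b))=J_0\cdot c(b),
$$
exhibiting $J$ as a trivial deformation.

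The argument is essentially a direct consequence of the slice isomorphism; I do not expect any serious obstacle. The only mild subtlety is that $\phi$ is just a homeomorphism, so $c$ is merely continuous and not a priori smooth; this is harmless since the version of the definition of trivial deformation given immediately before the corollary asks only that $c$ be a map into $G$, with no regularity requirement.
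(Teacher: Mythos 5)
Your proof is correct and is essentially the argument the paper leaves implicit: with $K=\tilde K\cap\mathcal I$ reduced to $\{J_0\}$, the isomorphism \eqref{Kuranishi} of Proposition \ref{lcexistence} represents every integrable structure near $J_0$ uniquely as $J_0\cdot\phi(\xi)$ with $\xi\in E^\perp\cap V$ depending smoothly on the structure, which immediately exhibits any germ of deformation as trivial via $c=\phi\circ\xi$. Note only that you prove the stronger (and surely intended) statement for germs of families in $\mathcal I$; under the paper's literal wording, where a deformation is a map into $K$, the corollary would already be immediate once $K$ is a single point.
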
 

\begin{remark}
\label{remarkZ}
Assume that $\mathcal E$ and $G$ are local Hilbert manifolds. Assume that $\mathcal I$ is given locally as the zero set of some analytic map $Q$. Denote by $Q^{lin}$ the linear part of $Q$. Then, consider the vector space
\begin{equation}
\label{Zariski}
\{ J\in T_{J_0}\mathcal E\mid P^*J=Q^{lin}J=0\}.
\end{equation}
This is the "tangent space" of $K$ at $J_0$, in the sense that the derivative at $0$ of any smooth map $c$ into $K$ with $c(0)=J_0$ lies in it. Assume that $K$ is a closed subset of a submanifold of $\mathcal E$ whose tangent space is \eqref{Zariski} (this is obviously the case if $K$ itself is a submanifold of $\mathcal E$ or an analytic subspace of $\mathcal E$ in any reasonable sense). We have
\begin{corollary}
In the setting of remark {\rm \ref{remarkZ}}, assume {\rm (H1), (H2)} and {\rm (H3)}. Assume also that {\rm \eqref{Zariski}} is reduced to a point. Then $J_0$ is rigid.
\end{corollary}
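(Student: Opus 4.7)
The plan is to deduce this from the corollary immediately preceding it, which already asserts rigidity whenever the Kuranishi type space $K$ is reduced to a single point. Thus the whole task is to show that, in the present stronger hypothesis, $K$ itself collapses locally to $\{J_0\}$, and then invoke the previous corollary verbatim.

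First I would apply Theorem \ref{main} (1) under (H1), (H2), (H3) to produce the local section $K$ together with the isomorphism \eqref{Kuranishi}, and recall the description of the ambient submanifold provided in Remark \ref{remarkZ}: by assumption $K$ is a closed subset of a submanifold $M \subset \mathcal{E}$ through $J_0$ whose tangent space at $J_0$ equals the vector space
$$
\{ J\in T_{J_0}\mathcal{E}\mid P^*J=Q^{lin}J=0\}
$$
from \eqref{Zariski}. Our hypothesis says this vector space is $\{0\}$.

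Next I would argue that a (Banach) submanifold passing through $J_0$ with zero tangent space there is, locally around $J_0$, reduced to the single point $\{J_0\}$. Indeed, by the definition of a submanifold recalled at the start of Section \ref{GS}, there is a local chart in which $M$ is identified with an open neighborhood of $0$ in its closed tangent space at $J_0$; if that tangent space is $\{0\}$, the chart domain is $\{0\}$, so $M$ is locally the point $J_0$. Since $J_0\in K\subset M$ and $K$ is closed, shrinking $U$ we get $K=\{J_0\}$.

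At this stage the previous corollary applies directly and yields rigidity of $J_0$. The only delicate point is the step above: one has to be sure that the framework of Remark \ref{remarkZ} really does place $K$ inside a submanifold with the prescribed tangent space — this is granted by hypothesis — and that the notion of submanifold at use (closed tangent space, local modeling on a Banach subspace) forces the triviality conclusion when that tangent space vanishes. Once this is accepted, no further analysis is needed; in particular no Kuranishi-type iteration or obstruction calculation is required here, since all the work has been done upstream in Proposition \ref{lcexistence} and in establishing that $K$ lies inside the announced model submanifold.
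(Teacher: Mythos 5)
Your argument is correct and is essentially the (omitted) proof the paper intends: since \eqref{Zariski} is by hypothesis the tangent space at $J_0$ of a submanifold of $\mathcal E$ containing $K$, its vanishing forces that submanifold, hence the germ of $K$ at $J_0$, to reduce to $\{J_0\}$, and the preceding corollary then yields rigidity. No further analysis is required.
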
 

\end{remark}

\section{Structures given by the sections of a bundle}
\label{existencesmooth}

\subsection{Setting}
\label{newsetting}

In many cases, the space $\mathcal E$ is a space of sections of a bundle. To be more precise, we assume now the following conditions.

\begin{itemize}
\item The set $\mathcal E$ is a subset of the space of $C^\infty$ sections of a fiber bundle over $X$ and a Fr\'echet manifold. Moreover,  $T_{J_0}\mathcal E$ is the space of $C^\infty$ sections of a vector bundle over $X$.

\item The group $G$ is a subgroup of the group of $C^\infty$ diffeomorphisms of $X$, and the vector space $T$ is a subspace of the Lie algebra of vector fields on $X$. Moreover, \eqref{Gcarte} is a Fr\'echet chart for $G$ at $e$.
\end{itemize}

Denote by $B_0$ and $B_1$ the two smooth vector bundles  over $X$ such that
$$
T=\Gamma^\infty(B_0)\qquad\text{and}\qquad T_{J_0}\mathcal E=\Gamma^\infty(B_1)
$$
where $\Gamma^\infty$ denotes the set of smooth sections of a bundle.
\vspace{5pt}\\
In this situation, we can be more precise than in Theorem \ref{main}. However, there is an additional problem that appears, that of the regularity of the sections. To run Theorem \ref{main}, we need to use a Sobolev completion and work with $W^l$ sections with $l>1+\dim X/2$; that is, we will suppose that the operator $P$ extends to appropriate Sobolev $l'$ and $l$-completions of $T$ and $T_{J_0}\mathcal E$ respectively. 
We also assume that the Fr\'echet structure of $\mathcal E$ extends as an Hilbert structure on $\mathcal E^l$, the corresponding subset of $W^l$ sections. In other words, given a Fr\'echet chart of $\mathcal E$ at some point $J$ modeled on $\Gamma^\infty(B_1)$, we may assume that it extends as a Hilbert chart of $\mathcal E^l$, modeled on the completion $\Gamma^l(B_1)$ of $\Gamma^\infty(B_1)$. In the same way, we assume that the Fr\'echet chart \eqref{Gcarte} from $T$ to $G$ extends as a Hilbert map from the $l'$-completion $T^{l'}$ to the $l'$-completion $G^{l'}$.
In principle $l'$ can be different from $l$. For instance, if $P$ is a differential operator of degree $k$, then $l'=l+k$.\vspace{5pt}\\
Then, fixing such an $l$, we are in the Hilbert setting. 
 
\subsection{The case of a differential operator}
\label{diff}

In this subsection, we replace the hypotheses (H1), (H2) and (H3) with the following ones.

\begin{hypotheses}\hfill
\vspace{-5pt}\begin{description}
\item[(H2diff)] The differential $L$ has the form $L(\xi,\omega)=\omega+P\xi$ for some differential operator $P : T\to T_{J_0}\mathcal E$.
\item[(H3diff)] The differential operator $P$ is elliptic with $C^\infty$ coefficients.
\end{description} 
\end{hypotheses}

\begin{remark}
Here, by elliptic, we mean that $P$ has an injective symbol, not a bijective one. In this latest case, we speak of a strongly elliptic operator.
\end{remark}

Also we add a $l$ to an hypothesis to say that it is valid for the particular Sobolev class $W^l$. For example, (H4$l$) means that (H4) is valid for the particular Sobolev class $W^l$, whereas (H4) means it is valid for all Sobolev classes (always assuming implicitly that $l$ is big enough).
We have now
\begin{proposition}
\label{Pelliptic}
Assume {\rm (H2diff)} and {\rm (H3diff)}. Then there exists a local section $K^l$ for all $l$. Moreover, assuming {\rm (H4$l$)} for a particular choice of $l$, then the corresponding $K^l$ is a Kuranishi type space.
\end{proposition}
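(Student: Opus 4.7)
The approach is to reduce Proposition \ref{Pelliptic} to the abstract machinery of Theorem \ref{main} after replacing the Fréchet setting of Section \ref{newsetting} by its Sobolev $W^l$-completion. The existence of $K^l$ will follow from standard elliptic theory on the compact manifold $X$, while the Kuranishi type statement will rest on elliptic regularity of the isotropy equation.

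First I would fix $l>1+\dim X/2$, so that $W^l$-diffeomorphisms form a topological group acting continuously on $\mathcal{E}^l$, and work in the completions $T^{l'}=\Gamma^{l'}(B_0)$ and $\Gamma^l(B_1)$ with $l'=l+k$ where $k$ is the order of $P$. (H2diff) then yields (H2) directly, since $P$ extends to a bounded operator $T^{l'}\to\Gamma^l(B_1)$. By (H3diff), $P$ is elliptic with smooth coefficients on a compact manifold, which gives the classical Fredholm package: $\ker P$ is finite-dimensional and contained in $\Gamma^\infty(B_0)$; $\text{Im}\,P$ is closed in $\Gamma^l(B_1)$; and its $L^2$-orthogonal complement is the finite-dimensional kernel of the formal adjoint $P^*$ (cf.\ Remark \ref{star}). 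Since finite-dimensional subspaces of Hilbert spaces always admit closed complements, (H1) and (H3) both hold. Proposition \ref{lcexistence} then produces $K^l$.

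For the Kuranishi type statement, by Proposition \ref{MCproof} it remains to verify (H5$l$) in addition to the assumed (H4$l$). This is the subtle step: on the full group $G^l$ of $W^l$-diffeomorphisms, composition is only continuous, because the chain rule costs one derivative. The point is that (H5$l$) only requires composition to be $C^1$ when the \emph{left} factor lies in $G_{J_0}$. The equation $J_0\cdot g=J_0$ defining $G_{J_0}$ has linearization $P$ at the identity and, thanks to the ellipticity of $P$ and the smoothness of $J_0$ and of the coefficients, behaves as an elliptic system, so elliptic bootstrapping promotes every element of $G_{J_0}$ to a $C^\infty$ diffeomorphism of $X$. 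Composition of a smooth diffeomorphism with a $W^l$ one is smooth on $G^l$, and a direct computation at $(e,e)$ in the chart \eqref{Gcarte} gives the required differential $(\xi,\eta)\mapsto\xi+\eta$. Applying Proposition \ref{MCproof} in the Hilbert category then shows that $K^l$ is a Kuranishi type space.

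The main obstacle is precisely this regularity promotion. Without the ellipticity of $P$, the linearized isotropy equation is not Fredholm, so elements of $G_{J_0}$ may be genuinely of regularity $W^l$, the composition map remains merely continuous, and (H5$l$) can fail; the inverse function argument in the proof of Proposition \ref{MCproof} then breaks down because the map \eqref{multi} is no longer a local $C^1$-isomorphism. Hypotheses (H2diff) and (H3diff) are exactly what is needed to bypass this obstacle and to keep the machinery of Section \ref{existence} available in the functional-analytic setting of sections of a bundle.
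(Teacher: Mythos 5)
Your overall architecture is the same as the paper's (pass to the $W^l$/Hilbert completion, get (H1) and (H3) from ellipticity plus the Hilbert structure, then reduce the Kuranishi type statement to verifying (H5$l$) by showing that the isotropy group consists of $C^\infty$ diffeomorphisms), but the step on which everything hinges is not justified. You assert that the isotropy equation $J_0\cdot g=J_0$ ``behaves as an elliptic system'' because its linearization at the identity is $P$, and that nonlinear elliptic bootstrapping therefore promotes every element of $G_{J_0}$ to a smooth diffeomorphism. The hypotheses do not support this: (H2diff) only says that the differential of the action at the single point $(0,J_0)$ is of the form $\omega+P\xi$ with $P$ a differential operator; the action itself is merely assumed smooth in the chart \eqref{Gcarte} and need not be given by any (quasilinear) differential expression in $g$, so there is no nonlinear regularity theory available, and ellipticity of the linearization at one point does not control the regularity of all solutions of the nonlinear isotropy equation. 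The paper's proof sidesteps this entirely: by (H3diff) and \emph{linear} elliptic regularity, $E=\ker P$ contains only $C^\infty$ vector fields; by (H4$l$), $G_{J_0}\cap W$ is parametrized by $E\cap V$ via $\psi$, and by the standing assumption of Section \ref{newsetting} that the chart \eqref{Gcarte} is the Sobolev extension of a Fr\'echet chart, smooth vector fields are sent to smooth diffeomorphisms; hence $G_{J_0}\cap W$ contains only $C^\infty$ elements, and the composition map is then smooth in its left factor (the problematic term $df(\chi')$ in the differential of composition is harmless when $f$ is smooth), which is exactly (H5$l$). In other words, (H4$l$) is not just an input to Proposition \ref{MCproof}: it is the hypothesis that replaces your bootstrap, and without using it in this way your verification of (H5$l$) has a genuine gap.

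A secondary error: you claim that the $L^2$-orthogonal complement of $\mathrm{Im}\,P$ is the \emph{finite-dimensional} kernel of the formal adjoint $P^*$. Since (H3diff) only requires an injective symbol, $P$ is overdetermined elliptic; then $\ker P$ is indeed finite-dimensional and smooth, but $\ker P^*$ is in general infinite-dimensional (already for $P=\bar\partial$ on vector fields, and this is the whole point of Remark \ref{important} and of the sasakian application, where the local sections are infinite-dimensional). This does not affect the verification of (H1) and (H3): closedness of $\mathrm{Im}\,P$ follows from ellipticity as you say, and in a Hilbert space every closed subspace admits a closed (orthogonal) complement, so no finite-dimensionality is needed; but the justification as written is incorrect and would suggest a misleading picture of $\tilde K$ and $K^l$.
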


\begin{proof}
Fix some $l>1+\dim X/2$. Since our vector spaces are Hilbert spaces, conditions (H1) and (H3), second part, are automatically satisfied. Moreover, following \cite[\S 3.9]{Narasimhan}, since $P$ is an elliptic operator by (H3diff), its image is closed in any Sobolev class and (H3) is completely fulfilled. Hence, by Theorem \ref{main}, there exists a local section $K^l$.
\vspace{5pt}\\
Consider the composition map
\begin{equation}
\label{cm}
(f,g)\in G^{l+k}\times G^{l+k}\longmapsto f\circ g \in G^{l+k}.
\end{equation}
Using the above assumption that $G^{l+k}$ is a subgroup of the $(l+k)$-diffeomor\-phi\-sm group of $X$, then we see that its (formal) differential at a point $(f,g)$ takes the form
$$
(\xi',\chi')\longmapsto \xi'\circ g+df(\chi')
$$
(cf. \cite[Example 4.4.5]{Hamilton}). Because of the term $df$, it does not map $W^{l+k}$-vector fields onto $W^{l+k}$-vector fields. So \eqref{cm} is not even a $C^1$ map. But it is a smooth map when we take $f$ in a submanifold of $G^{l+k}$ that contains only $C^\infty$ points.\vspace{5pt}\\
Now, still by (H3diff), $E$ is the kernel of an elliptic operator with smooth coefficients, hence contains only $C^\infty$ elements \cite[\S 3.7]{Narasimhan}. By (H4) and our assumptions on the chart \eqref{Gcarte}, the same is true for $G_{J_0}\cap W$. From this, it follows that (H5$l$) is fulfilled. We conclude by Theorem \ref{main}. 
\end{proof}
\begin{remark}
\label{important}
It is crucial to emphasize that, in the setting we use here, the fact that $P$ is elliptic {\bf does not imply} that the local section $K^l$ is finite-dimensional. This is because the tangent space to $K^l$ at the base point {\bf is not} given by the kernel of the laplacian associated to $P$ as in the classical case; but by the kernel of \eqref{Zariski} which may be completely different from this laplacian. Indeed, perhaps the main idea of our construction is to separate the integrability condition from the existence of a local section, so that the operator encoding integrability (that is the linear part of the integrability equation) is not supposed to be the same as the operator encoding the orbit (that is $P$). This allows us to gain flexibility and treat cases that cannot be treated in the classical setting.
\end{remark}

In this framework, one is faced to a delicate analytic problem. If (H2diff), (H3diff), and (H4) are satisfied, then
there exists a Kuranishi type space $K^l$ for any $l>1+\dim X/2$. Nevertheless, the interesting geometric situation is the $C^\infty$ class, for which we do not have
a Kuranishi type space, for we cannot use 
Theorem \ref{main}, the spaces of sections not being Banach spaces.
\vspace{5pt}\\
Since our deformation problems arise mainly from geometric situations, one may expect that Theorem \ref{main} is still valid in the $C^\infty$ case, 
taking as Kuranishi type space the set of $C^\infty$ points of $K^l$ (which should be the same for all $l$).
\vspace{5pt}\\
However, this is not evident at all from the point of view of differential operator theory. Indeed, in the general case (that is if $P$ is not elliptic but has closed image in each class $W^l$), we do not even know if the set of $C^\infty$ points of $K^l$ is not empty.
\vspace{5pt}\\
The only case where this problem easily disappears is the case where the tangent space of $K^l$ given in \eqref{Zariski} is the kernel of an elliptic 
operator with $C^\infty$ coefficients. Then, $K^l$ contains only $C^\infty$ structures (see \cite{Narasimhan}, \S 3.7), hence $K^l$ does not depend on $l$ and is a Kuranishi type space for $C^\infty$-structures. However, this forces this space to be finite-dimensional, which is not the case in many geometric problems, as 
in subsection \ref{riemann} and section \ref{sasakian}, and which is not the case in our setting, cf. remark \ref{important}.
\vspace{5pt}\\
If $P$ is elliptic, we have the following weaker result.

\begin{proposition}
\label{density}
Assume {\rm (H2diff)} and {\rm (H3diff)}. 
Then,
$K^l$ is equal to the $l$-completion of $K^\infty$, the space of elements of $K^l$
of class $C^\infty$.
\end{proposition}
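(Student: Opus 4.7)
The inclusion $\overline{K^\infty}^{W^l}\subseteq K^l$ is immediate since $K^l$ is closed in $\mathcal E^l$ and $K^\infty\subseteq K^l$. For the converse direction, the plan is to approximate any $J\in K^l$ by smooth integrable structures in $W^l$ and then to transport them through the slice retraction $\Xi$, which by elliptic regularity produces smooth elements of $K$.

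\textbf{Step 1 (elliptic regularity of $\Xi$).} By Remark \ref{star} we may take $F^\perp=\ker P^*$ in the Hilbert setting. Pick a local chart $\exp\colon U'\subset T_{J_0}\mathcal E\to U$ so that $\tilde K=\exp(F^\perp\cap U')$. For $\tilde J\in U$, the pair $(\xi,J')=\Phi^{-1}(\tilde J)$ with $J'=\Xi(\tilde J)$ is characterised by $\tilde J=J'\cdot\phi(\xi)$ and the slice condition $P^*\exp^{-1}(J')=0$. Eliminating $J'$ gives the nonlinear equation
\[
P^*\exp^{-1}\!\bigl(\tilde J\cdot\phi(\xi)^{-1}\bigr)=0
\]
for $\xi$ alone. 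Its linearisation in $\xi$ at $(0,J_0)$ equals, up to sign, $P^*P$, which by (H3diff) is a strongly elliptic operator of order $2k$ with $C^\infty$ coefficients. Standard nonlinear elliptic regularity (cf. \cite[\S 3.7]{Narasimhan}) then implies that if $\tilde J$ is $C^\infty$ the small solution $\xi$ is $C^\infty$, hence so is $J'=\tilde J\cdot\phi(\xi)^{-1}$. Combined with the $G$-invariance of $\mathcal I$, this yields $\Xi\bigl(U\cap\mathcal I\cap\Gamma^\infty\bigr)\subseteq K^\infty$.

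\textbf{Step 2 (smoothing inside $\tilde K^l$).} For $J\in K^l$, set $\omega:=\exp^{-1}(J)\in(\ker P^*)^l$. Because $P$ is elliptic with closed image in every Sobolev class (\cite[\S 3.9]{Narasimhan}), the decomposition $T_{J_0}\mathcal E^l=\operatorname{Im}P^l\oplus(\ker P^*)^l$ is direct in $W^l$ with a continuous projection $\pi$. Mollify $\omega$ to smooth sections $\hat\omega_n\in\Gamma^\infty(B_1)$ with $\hat\omega_n\to\omega$ in $W^l$ and set $\omega_n:=\pi(\hat\omega_n)$. Writing $\omega_n=\hat\omega_n-P\eta_n$, the strongly elliptic equation $P^*P\eta_n=P^*\hat\omega_n$ has smooth right-hand side, so $\eta_n$ and thus $\omega_n$ are smooth by elliptic regularity. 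By continuity of $\pi$ in $W^l$, $\omega_n\to\omega$ in $W^l$, and hence $\exp(\omega_n)\in\tilde K^\infty$ converges to $J$ in $W^l$.

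\textbf{Main obstacle.} The smooth approximations $\exp(\omega_n)$ produced in Step 2 lie in $\tilde K^\infty$ but not a priori in $\mathcal I$, so they are not yet elements of $K$. The missing ingredient is the $W^l$-density of smooth integrable structures inside $\mathcal I^l$ near $J_0$: once one produces $\tilde J_n\in\mathcal I\cap C^\infty$ with $\tilde J_n\to J$ in $W^l$, Step 1 gives $\Xi(\tilde J_n)\in K^\infty$ and continuity of $\Xi$ yields $\Xi(\tilde J_n)\to\Xi(J)=J$ in $W^l$. The crux of the argument is therefore an implicit-function-type correction that perturbs the smooth elements $\exp(\omega_n)\in\tilde K$ into $\mathcal I$ without losing either smoothness or $W^l$-closeness to $J$; this should be achievable by combining a local analytic description of $\mathcal I$ (in the spirit of Remark \ref{remarkZ}) with the closed range of $P$ and the elliptic regularity used in Step 1.
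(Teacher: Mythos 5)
Your Step~2 is, in substance, a correct and more elementary proof of the statement that the paper's own argument reduces to, namely that $\ker P^*\cap\Gamma^\infty(B_1)$ is $W^l$-dense in $\ker P^*\cap\Gamma^l(B_1)$: the paper gets this by completing the injective symbol of $P$ to an exact sequence via a pseudodifferential operator $P_1$ and applying the Hodge decomposition of $S=P_0P_0^*+P_1^*P_1$, whereas you mollify and then project onto $\ker P^*$ along $\operatorname{Im}P$, with smoothness of the projection coming from elliptic regularity of the strongly elliptic operator $P^*P$. That part of your proposal is sound and would serve as an alternative to the paper's construction of $P_1$.

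However, the proposal as a whole has a genuine gap, and you name it yourself. The smooth approximants $\exp(\omega_n)$ lie in the slice $\tilde K$ but not in $\mathcal I$, so they are not elements of $K^\infty$, and the ``implicit-function-type correction'' that would perturb them into $\mathcal I$ while preserving both smoothness and $W^l$-closeness is never carried out --- you only assert that it ``should be achievable.'' Step~1 does not rescue this: the regularity of $\Xi$ on smooth integrable structures is only useful once one already has smooth elements of $\mathcal I$ converging to $J$ in $W^l$, which is precisely what is missing. So the argument proves the linear kernel density and then reduces the proposition to an unproved claim, i.e.\ it stops exactly where the proof has to be finished. For comparison, the paper dispatches the nonlinear part in one sentence: invoking the description \eqref{Zariski} of $K$ at $J_0$, it declares that it suffices to prove the linear density of $\ker P^*$, and the whole written proof is devoted to that linear statement; your write-up identifies that reduction as the crux and leaves it open, so as submitted it does not establish that every point of $K^l$ is a $W^l$-limit of points of $K^\infty$.
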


\begin{proof}
We owe this proof to J.A. \'Alvarez-L\'opez.
\vspace{5pt}\\
Because of \eqref{Zariski}, it is enough to prove that the kernel of $P^*$ applied to $W^l$ points is the $l$-completion of the kernel of $P^*$ applied to smooth points.
\vspace{5pt}\\
First assume that $P$ is the first morphism of an elliptic complex $(E_i,P_i)$ (that is, $T=E_0$, $T_{J_0}\mathcal E=E_1$ and $P=P_0$). Hodge decomposition Theorem implies
$$
\text{Ker } P_0^* = \text{Ker }(P_1+P_0^*) \oplus P_1^*(\Gamma^\infty(E_2))
$$
in $\Gamma^\infty(E_1)$, and
$$
\text{Ker } P_0^* = \text{Ker }(P_1+P_0^*) \oplus P_1^*(\Gamma^{l+k}(E_2))
$$
in $\Gamma^l(E_1)$. Indeed,  $P_1^*(\Gamma^\infty(E_2))$ is dense in $P_1^*(\Gamma^{l+k}(E_2))$ because $\Gamma^\infty(E_2)$ is dense in $\Gamma^{l+k}(E_2)$ and $P_1^*: \Gamma^{l+k}(E_2) \to \Gamma^l(E_1)$ is continuous. Besides, $\text{Ker }(P_1+P_0^*)$ is the same in both decompositions because it contains only sections of class $C^\infty$ and has finite dimension. So $\text{Ker } P_0^*$ in $\Gamma^\infty(E_1)$ is dense in $\text{Ker } P_0^*$ in each $\Gamma^l(E_1)$.
\vspace{5pt}\\
To deal with the general case, let
$\sigma_0(x,\xi):(E_0)_x\to (E_1)_x$ be the injective symbol of $P$ where $x\in X$ and $0\ne\xi\in T_xX^*$. Let
$$
E_2 = ((TX^*)^{\otimes k} \otimes E_1)/I^2 ,
$$
where $I^2$ is the vector bundle whose fiber at $x$ is generated by vectors
$$
\xi^{\otimes k} \otimes \sigma_0(x,\xi)(v)
$$
for $\xi\in T_xX^*$ and $v\in E^0_x$. This $I^2$ is a subbundle because $\sigma_0(x,\xi)$ is injective. Also it depends differentiably in $(x,\xi)$.  
For $x\in X$ and $\xi\in T_xX^*$, define $\sigma_1(x,\xi):(E_1)_x\to (E_2)_x$ as
$$
\sigma_1(x,\xi)(v)=[\xi^{\otimes k} \otimes v] ,
$$
where the brackets denote the class modulo $I^2_x$. Then $\sigma_1(x,\xi)(v)$ is linear in $v$, differentiable in $(x,\xi)$, and homogeneous of order $k$ in $\xi$. Hence it is the principal symbol of some pseudodifferential operator $P_1:\Gamma^\infty(E_1)\to \Gamma^\infty(E_2)$ of order $k$, since pseudodifferential operators are locally defined by their symbols. 
Moreover, the sequence
$$
0 \to (E_0)_x \overset{\sigma_0(x,\xi)}{\longrightarrow} (E_1)_x
\overset{\sigma_1(x,\xi)}{\longrightarrow} (E_2)_x
$$
is exact if $\xi\ne0$.

By induction, we construct vector bundles $E_i$ and operators 
$$
P_i:\Gamma^\infty(E_i)\to \Gamma^\infty(E_{i+1})
$$ 
with symbol $\sigma_i$ for all $i$. The sequence of symbols $\sigma_i$ is exact (although infinite). 
From the properties fulfilled by $\sigma_0$ and $\sigma_1$ we have that
$$
\sigma_0(x,\xi) \sigma_0(x,\xi)^* + \sigma_1(x,\xi)^* \sigma_1(x,\xi)
$$
is an isomorphism if $\xi\ne0$. Hence $S = P_0 P_0^* + P_1^* P_1$ is an elliptic selfadjoint operator, yielding a Hodge decomposition 
$$
\Gamma^\infty(E_1)= \text{Ker } S \oplus \text{Im } S
$$
It can be refined as
$$
\Gamma^\infty(E_1)
=(\text{Ker } P_0^* \cap \text{Ker } P_1) \oplus \text{Im } P_0 \oplus \text{Im } P_1^* .
$$
from which it follows that
$$
\text{Ker } P_0^* = (\text{Ker } P_0^* \cap \text{Ker } P_1) \oplus Q
$$
with $Q \cong \text{Im } P_1^*$ given by canonical projection. 
\end{proof}

\begin{corollary}
Fix $l$. Then the set of $C^\infty$ points of $K^l$ is dense in $K^l$ and does not depend on $l$.
\end{corollary}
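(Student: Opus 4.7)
The density statement is essentially a direct reading of Proposition \ref{density}. That proposition asserts that $K^l$ coincides with the $l$-completion of $K^\infty$; by definition this means exactly that $K^\infty$ is dense in $K^l$ with respect to the $W^l$ topology. So the first assertion requires no extra work beyond citing the proposition.

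For the independence of $l$, I would argue by mutual inclusion after fixing $l_1\le l_2$. The set $K^l$ is cut out inside $\mathcal E^l$ by the equations $P^*J=0$ and $Q^{\text{lin}}J=0$ (in the sense of \eqref{Zariski}), together with the condition that $J$ lie in the appropriate $W^l$-completion. A $W^{l_2}$ section is in particular a $W^{l_1}$ section, and the equations defining the section transform continuously between these classes, so $K^{l_2}\subset K^{l_1}$. Restricting to the smooth locus yields the inclusion $K^\infty_{l_2}\subset K^\infty_{l_1}$, where $K^\infty_l$ denotes the $C^\infty$ points of $K^l$.

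For the reverse inclusion, take $J\in K^\infty_{l_1}$. Being $C^\infty$, $J$ belongs to every Sobolev class $W^{l}$, in particular to $\mathcal E^{l_2}$; moreover, the equations $P^*J=0$ and $Q^{\text{lin}}J=0$, being local differential conditions that $J$ satisfies pointwise as a smooth section, are preserved in any Sobolev completion. Hence $J\in K^{l_2}$ and obviously remains smooth, so $J\in K^\infty_{l_2}$. This gives $K^\infty_{l_1}\subset K^\infty_{l_2}$, and combined with the previous inclusion shows that $K^\infty_l$ is the same set for every admissible $l$.

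The only subtle point is the inclusion $K^{l_2}\subset K^{l_1}$, i.e.\ checking that the local description of $K^l$ (via the retraction $\Xi$ from $\tilde K\cap\mathcal I$ and the equations in \eqref{Zariski}) is genuinely compatible across Sobolev classes. This compatibility is built into the construction of subsection \ref{newsetting}, where the Fr\'echet chart of $\mathcal E$ is assumed to extend to Hilbert charts of each $\mathcal E^l$, but it is the step I would spell out most carefully, as everything else is a formal consequence of Proposition \ref{density} plus the triviality that $C^\infty\subset W^l$ for all $l$.
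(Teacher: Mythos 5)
Your proof is correct and follows essentially the same route as the paper, which offers no separate argument for this corollary: density is exactly the content of Proposition \ref{density}, and the $l$-independence of the smooth locus (up to the usual shrinking of neighborhoods, which you rightly flag) is the formal consequence you describe of smooth sections lying in every Sobolev class while the defining conditions are unchanged. One small correction: $K^l$ is cut out by the slice equation $P^*=0$ together with the \emph{full} integrability condition $J\in\mathcal I$ (i.e.\ $Q=0$), not by the linearization $Q^{\mathrm{lin}}$ --- the space \eqref{Zariski} is only the tangent space of $K^l$ at $J_0$ --- but since these conditions are equally local and equally preserved for smooth sections across Sobolev classes, this substitution does not affect your inclusions.
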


\subsection{Smooth versus Sobolev Kuranishi type spaces}
\label{smoothsobolev}

Proposition \ref{density} does not provide us with a Kuranishi type space for $C^\infty$ structures. To obtain such a result, we will use, if it exists, an affine connection on $\mathcal E$. The following is directly inspired in \cite{Ebin}. Assume that 
\begin{hypotheses}\hfill
	\vspace{-5pt}\begin{description}
		\item[(H6)] For all $\xi\in V$, the structure $J_0\cdot \phi(\xi)$ is of class $C^\infty$ if and only if $\xi$ is
		of class $C^\infty$.
		\item[(H7$l$)] There exists a smooth affine connection on $\mathcal E^l$ which is invariant under the action of $G^{l+k}$.
		\item[(H8)] The vector bundle $B_1$ is a natural bundle and action \eqref{action} is the natural action.
	\end{description}
\end{hypotheses}

In particular, (H6) implies that $J_0$ itself is assumed to be of class $C^\infty$. We also observe that, if $J$ can be identified with some differential form $\omega$ with values in a vector bundle, then (H8) implies that the action is given by pull-back, i.e.  $\omega\cdot \phi(\xi)=\phi(\xi)^*\omega$.
\vspace{5pt}\\
Then, we can associate to the affine connection an exponential map \cite{Lang}, \S IV.4 and \S VII.7
$$
\text{Exp }: T\mathcal E^l\longrightarrow \mathcal E^l.
$$
Because of (H7$l$), it is invariant under the action of  $G^{l+k}$. Consider the local orbit $\mathcal O$ of $G$ at $J_0$. Assuming (H1$l$), (H2$l$) and (H3$l$) 
then, by \eqref{Kuranishi}, it is closed with tangent space
equal to $F=\text{Im }P$ and orthogonal complement $F^\perp$. Then we have

\begin{lemma}
	\label{exp}
	The map
	\begin{equation}
		\label{expN}
		(\xi,\omega)\in E^\perp\times F^\perp\longmapsto \exp_N(\xi,\omega):=\text{\rm Exp }((J_0,\omega)\cdot \phi(\xi))
	\end{equation}
	is a local isomorphism at $(0, 0)$ onto the neighborhood of $J_0$ in $\mathcal E^l$.
\end{lemma}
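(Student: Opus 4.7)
The plan is to apply the Banach inverse function theorem to $\exp_N$ on the Hilbert manifold $\mathcal E^l$, after showing that its differential at $(0,0)$ is a topological isomorphism $E^\perp\oplus F^\perp \to T_{J_0}\mathcal E^l$. The key to disentangling the two arguments is the $G^{l+k}$-invariance of the connection granted by hypothesis (H7$l$): it reduces the formula \eqref{expN} to one in which the dependences on $\xi$ and on $\omega$ separate cleanly.

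First I would exploit (H7$l$) to show that the associated exponential map is equivariant, in the sense that for every $g\in G^{l+k}$ near $e$ and every $\omega \in T_{J_0}\mathcal E^l$,
$$
\text{Exp}\bigl((J_0,\omega)\cdot g\bigr) \;=\; \text{Exp}(J_0,\omega)\cdot g.
$$
This is standard for invariant affine connections (see \cite{Lang}, \S IV.4): the geodesic issued from $(J_0,\omega)\cdot g$ is the image under $g$ of the geodesic issued from $(J_0,\omega)$. Setting $g=\phi(\xi)$ rewrites
$$
\exp_N(\xi,\omega) \;=\; \text{Exp}(J_0,\omega)\cdot \phi(\xi),
$$
separating the ``group direction'' $\xi$ from the ``normal direction'' $\omega$.

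Next I would compute the two partial derivatives at $(0,0)$. With $\xi=0$, the map reduces to $\omega\mapsto \text{Exp}(J_0,\omega)$, whose differential at $\omega=0$ is the identity on $T_{J_0}\mathcal E^l$ (a general property of any affine exponential, cf.~\cite{Lang}, \S VII.7). With $\omega=0$, it reduces to $\xi\mapsto J_0\cdot\phi(\xi)$, whose differential at $\xi=0$ is $P\xi$ by (H2). Summing, the total differential of $\exp_N$ at $(0,0)$ is
$$
(\xi,\omega)\in E^\perp\oplus F^\perp \;\longmapsto\; P\xi+\omega \in T_{J_0}\mathcal E^l.
$$

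To conclude, I would note that $E=\ker P$ implies $P|_{E^\perp}\colon E^\perp\to F$ is a bounded injection onto the closed subspace $F$ of (H3), hence a topological isomorphism by the open mapping theorem. Combined with the topological direct sum $T_{J_0}\mathcal E^l=F\oplus F^\perp$ also from (H3), this shows the differential is an isomorphism of Banach spaces, so the Banach inverse function theorem furnishes the desired local isomorphism onto a neighborhood of $J_0=\exp_N(0,0)$ in $\mathcal E^l$. The main subtlety, rather than a genuine obstacle, is the equivariance step: without (H7$l$) one would have to include a Jacobian correction relating $\text{Exp}$ at $(J_0,\omega)\cdot \phi(\xi)$ to $\text{Exp}$ at $(J_0,\omega)$, and the partial derivatives in $\xi$ and $\omega$ would no longer split into the clean direct sum $P\xi+\omega$ that makes the isomorphism argument transparent.
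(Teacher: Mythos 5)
Your proof is correct and follows essentially the same route as the paper: compute the differential of $\exp_N$ at $(0,0)$, identify it with $(\xi,\omega)\mapsto\omega+P\xi$, and conclude by the inverse function theorem using (H1)--(H3). The only cosmetic difference is that the paper obtains the partial derivatives directly from the fact that $\mathrm{Exp}$ restricted to the zero section is the identity, rather than first invoking the $G^{l+k}$-equivariance of $\mathrm{Exp}$, which is only needed later (in Proposition \ref{expCinfini}).
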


\begin{proof}
	This is a standard fact about the exponential map that the composition map
	$$
	\mathcal E^l\longrightarrow T\mathcal E^l\longrightarrow \mathcal E^l
	$$
	of the projection map with the inclusion map as zero section is the identity. From this, the differential of \eqref{expN} at $(0,0)$ is given by
	$$
	(\xi,\omega)\in E^\perp\times F^\perp\longmapsto \omega+P\xi\in T_{J_0}\mathcal E
	$$
	and this is now a direct application of the inverse function theorem.
\end{proof}
Then, calling $\pi$ the projection $E^\perp\times F^\perp$ onto the first factor, we have

\begin{proposition}
	\label{expCinfini}
	Assume {\rm (H2diff)} and {\rm (H3diff)}, or {\rm (H1$l$)}, {\rm (H2$l$)} and {\rm (H3$l$)}. Assume also {\rm (H7$l$)}. Then we may define the local section of {\rm \eqref{Kuranishi}} as 
	\begin{equation}
		\label{Ktilde}
		\tilde K:=\{\text{\rm Exp } (J_0,\omega)=\exp_N(0,\omega) \mid \omega\in F^\perp\ \text{\rm close to }0\}
	\end{equation} 
	Moreover the inverse map of
	{\rm \eqref{Kuranishi}} is given by
	\begin{equation}
		\label{inverse}
		J\longmapsto \Phi^{-1}(J):=(\pi((\exp_N)^{-1}(J)), J\cdot (\phi(\pi((\exp_N)^{-1}(J)))^{-1})).
	\end{equation}
	
\end{proposition}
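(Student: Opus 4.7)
The plan is to verify that the subset defined by \eqref{Ktilde} is a legitimate choice of the submanifold $\tilde K$ required by Proposition \ref{lcexistence} (i.e. a smooth submanifold through $J_0$ tangent to $F^\perp$), and then to exploit the $G$-equivariance of the exponential map coming from (H7$l$) to deduce the explicit inverse formula \eqref{inverse}.

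First, I would note that Lemma \ref{exp} specializes at $\xi=0$ to the statement that
$$
\omega\in F^\perp\cap (\text{small ball})\longmapsto \mathrm{Exp}(J_0,\omega)
$$
is a local isomorphism at $0$ onto a submanifold of $\mathcal E^l$ through $J_0$ whose tangent space at $J_0$ is precisely $F^\perp$. This is essentially the standard property of the exponential map associated to an affine connection (cf.\ Lang, \S IV.4 and \S VII.7): at the zero section, its differential along the fiber is the identity. Consequently, $\tilde K$ as defined by \eqref{Ktilde} meets the hypotheses of Proposition \ref{lcexistence}, and \eqref{Kuranishi} restricted to $E^\perp\cap V\times\tilde K$ is an isomorphism onto a neighborhood of $J_0$.

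Next, I would invoke the $G^{l+k}$-invariance of the connection guaranteed by (H7$l$) to obtain the equivariance relation
$$
\mathrm{Exp}(v\cdot g)=\mathrm{Exp}(v)\cdot g
$$
for every $v\in T\mathcal E^l$ and every $g\in G^{l+k}$, where $G^{l+k}$ acts on $T\mathcal E^l$ via the differential of its action on $\mathcal E^l$. This is the infinite-dimensional analog of the classical fact that an affine isomorphism sends geodesics to geodesics. Applying it to $v=(J_0,\omega)$ and $g=\phi(\xi)$ yields
$$
\exp_N(\xi,\omega)=\mathrm{Exp}\bigl((J_0,\omega)\cdot\phi(\xi)\bigr)=\mathrm{Exp}(J_0,\omega)\cdot\phi(\xi)=\Phi\bigl(\xi,\mathrm{Exp}(J_0,\omega)\bigr),
$$
so the two parametrizations of a neighborhood of $J_0$ — via $\Phi$ on $E^\perp\times\tilde K$ and via $\exp_N$ on $E^\perp\times F^\perp$ — agree.

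To conclude, given $J$ near $J_0$, Lemma \ref{exp} produces a unique pair $(\xi,\omega)=\exp_N^{-1}(J)\in E^\perp\times F^\perp$ with $J=\mathrm{Exp}(J_0,\omega)\cdot\phi(\xi)$. Hence $\mathrm{Exp}(J_0,\omega)=J\cdot\phi(\xi)^{-1}$ lies in $\tilde K$, and $\xi=\pi(\exp_N^{-1}(J))$ by definition of $\pi$. This immediately yields formula \eqref{inverse}. The one step that I expect to require some care — and is really the only non-formal point — is justifying the equivariance $\mathrm{Exp}(v\cdot g)=\mathrm{Exp}(v)\cdot g$ in the Hilbert-manifold setting from the $G^{l+k}$-invariance of the connection; this is classical in finite dimensions but should be spelled out (e.g.\ by pulling back geodesics and using uniqueness of solutions of the geodesic ODE) to make sure nothing is lost when passing from $\mathcal E$ to $\mathcal E^l$.
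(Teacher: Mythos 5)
Your proposal is correct and follows essentially the same route as the paper: both rest on the $G^{l+k}$-equivariance $\mathrm{Exp}(v\cdot g)=\mathrm{Exp}(v)\cdot g$ supplied by (H7$l$), which gives $\Phi(\xi,\mathrm{Exp}(J_0,\omega))=\exp_N(\xi,\omega)$, and then on Lemma \ref{exp} to invert; the paper simply verifies $\Phi^{-1}\circ\Phi=\mathrm{Id}$ by the same computation you run in the opposite direction. The equivariance point you flag is indeed the only substantive ingredient, and the paper likewise takes it from (H7$l$) (with Lang's treatment of the exponential map) rather than reproving it.
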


\begin{proof}
	This is a straightforward computation using Lemma \ref{exp}. We have
	$$
	\begin{aligned}
	\Phi^{-1}\circ\Phi (\xi, J)&=\Phi^{-1}\circ \Phi(\xi,\text{Exp } (J_0,\omega))\cr
	&=\Phi^{-1}(\text{Exp }(J_0,\omega)\cdot\phi(\xi)) \cr
	&=\Phi^{-1} (\text{Exp }((J_0,\omega)\cdot\phi(\xi))\cr
	&=\Phi^{-1}(\exp_N(\xi,\omega))
	\end{aligned}
	$$
	using the invariance of the exponential map. Hence,
	$$
	\pi((\exp_N)^{-1}(\exp_N(\xi,\omega))=\xi
	$$
	and, using \eqref{expN} and remembering that $J$ is $\text{Exp } (J_0,\omega)$,
	$$
	\Phi^{-1}\circ\Phi (\xi, J)=(\xi,J\cdot (\phi(\xi))^{-1}\cdot\phi(\xi))=(\xi,J).
	$$\end{proof}

As in Section \ref{diff}, define $K^\infty$ as the set of $C^\infty$ points\footnote{However, here, $l$ is fixed and, strictly speaking, $K^\infty$ depends on $l$.} of $K^l$.
We have now

\begin{proposition}
	\label{Cinfini}
	Assume {\rm (H2diff)} and {\rm (H3diff)}, or {\rm (H1$l$)}, {\rm (H2$l$)} and {\rm (H3$l$)}. Assume also {\rm (H6)}, {\rm (H7$l$)} and {\rm (H8)}.
	Define $\tilde K$ as in {\rm \eqref{Ktilde}}. Then both maps {\rm \eqref{Kuranishi}} and {\rm \eqref{inverse}} preserve the $C^\infty$ class. In particular, the map 
	{\rm \eqref{Kuranishi}} is an 
	isomorphism for $C^\infty$ structures and $K^\infty$ is a local section, and a Kuranishi type space if $K^l$ is.
\end{proposition}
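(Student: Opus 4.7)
The plan is to reduce everything to the single equivariance identity
\[
\exp_N(\xi,\omega) \;=\; \mathrm{Exp}(J_0,\omega)\cdot \phi(\xi),
\]
which follows immediately from (H7$l$) applied to the definition \eqref{expN}: the $G^{l+k}$-invariance of the affine connection forces $\mathrm{Exp}$ to intertwine the action on $T\mathcal E^l$ with that on $\mathcal E^l$. Granting this identity, showing that both \eqref{Kuranishi} and \eqref{inverse} preserve the $C^\infty$ class reduces to two separate regularity claims, which I address in turn.

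For the forward direction (preservation of smoothness by \eqref{Kuranishi}), I take $\omega \in F^\perp$ and $\xi \in E^\perp$ smooth. Since the affine connection is smooth on $\mathcal E^l$ and $J_0$ is smooth by (H6), the geodesic ODE produced by the connection has $C^\infty$ coefficients in a neighborhood of $J_0$; standard ODE regularity then yields that $\mathrm{Exp}(J_0,\omega)$ is a smooth section of $B_1$. On the group side, the chart \eqref{Gcarte} restricts to a Fr\'echet chart sending smooth vector fields to smooth diffeomorphisms, so $\phi(\xi)$ is $C^\infty$. Finally, by (H8) the action is the natural pullback by $\phi(\xi)$, which carries smooth sections to smooth sections. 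Combining these, $\Phi(\xi, \mathrm{Exp}(J_0,\omega)) = \mathrm{Exp}(J_0,\omega)\cdot\phi(\xi)$ is smooth.

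For the reverse direction (preservation of smoothness by \eqref{inverse}), I suppose $J$ is of class $C^\infty$ and set $\xi := \pi((\exp_N)^{-1}(J))$, so that by the equivariance identity the second component of $\Phi^{-1}(J)$ equals $J\cdot\phi(\xi)^{-1} = \mathrm{Exp}(J_0,\omega)$ for a unique $\omega \in F^\perp$. The crux is to show $\xi$ is smooth: once this is known, (H8) yields smoothness of $J\cdot\phi(\xi)^{-1}$, and then $\omega$ is automatically smooth because $\mathrm{Exp}(J_0,\cdot)$ is a local isomorphism of the Hilbert manifolds whose inverse, at the smooth base point $J_0$, preserves the smooth class by the same ODE argument used in the forward direction. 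To obtain smoothness of $\xi$, I will exploit (H6): by the uniqueness part of Lemma \ref{exp}, $\xi$ is the one and only element of $E^\perp\cap V$ such that $J\cdot\phi(\xi)^{-1}$ lies in the slice $\tilde K$, and via (H8) the pullback action by $\phi(\tilde\xi)$ propagates the $C^\infty$-regularity test of (H6) from $J_0$ to nearby structures. The smoothness of $J$ then forces $\xi$ itself to be smooth.

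Assembling the two directions, \eqref{Kuranishi} restricts to a bijection between pairs of smooth $(\xi, J)$ and smooth structures in the image, and its inverse \eqref{inverse} respects this restriction. Thus $K^\infty$ is a local section in the sense of Section \ref{local section}, and any minimality condition (MC) already established for $K^l$ passes at once to the sub-object $K^\infty$, so $K^\infty$ is a Kuranishi type space whenever $K^l$ is. The main obstacle I anticipate is the smoothness bootstrap for $\xi$ in the reverse direction: the inverse function theorem on the Hilbert level only produces an inverse of Sobolev class $W^{l'}$, so one genuinely needs the combined strength of (H6), (H7$l$) and (H8) — in the spirit of Ebin's slice theorem \cite{Ebin} — to propagate $C^\infty$-regularity back from $J$ to $(\xi,\omega)$.
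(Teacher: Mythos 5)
There is a genuine gap, and it sits exactly at the point you yourself flag as ``the main obstacle''. Your proposal correctly isolates the equivariance identity $\exp_N(\xi,\omega)=\mathrm{Exp}(J_0,\omega)\cdot\phi(\xi)$ coming from (H7$l$), but the regularity claims you build on it are not justified. In the forward direction you argue that $\mathrm{Exp}(J_0,\omega)$ is $C^\infty$ because ``the geodesic ODE has $C^\infty$ coefficients''; this conflates two different kinds of smoothness. The connection of (H7$l$) is a smooth affine connection on the Hilbert manifold $\mathcal E^l$ of $W^l$ sections: its geodesic equation is an ODE in that Hilbert manifold, and standard ODE theory only gives smooth dependence on time and on the data in the $W^l$ topology, i.e.\ a $W^l$ section at time $1$ --- it says nothing about the regularity of that section over $X$. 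Nothing in (H7$l$) makes the connection a natural or differential-operator object, nor compatible across Sobolev levels (cf.\ Remark \ref{remarkEbin}: the strong metric used here depends on the fixed $l$), so the ODE argument cannot produce $C^\infty$ regularity. In the reverse direction the crucial bootstrap --- $J$ smooth implies $\xi$ (and then $\omega$) smooth --- is not actually carried out: the phrase ``(H8) propagates the $C^\infty$-regularity test of (H6) from $J_0$ to nearby structures'' has no concrete content, since (H6) is a statement about $J_0$ only, and local uniqueness of $\xi$ from Lemma \ref{exp} gives no regularity.

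The missing idea, which is how the paper proceeds, is that the $G^{l+k}$-equivariance of $\mathrm{Exp}$ together with the naturality hypothesis (H8) implies that $\mathrm{Exp}$ commutes with pull-back by diffeomorphisms, hence with Lie derivatives, and smoothness of a $W^l$ section of a natural bundle is detected precisely by Lie derivatives along smooth vector fields; this is Ebin's Theorem 7.5, and it yields that $\mathrm{Exp}$ is a homeomorphism from the set of smooth points of $T\mathcal E$ onto the set of smooth points of $\mathcal E$ --- in both directions, which is what your ODE argument cannot give. With this in hand the proof closes in one stroke: if $J=\exp_N(\xi,\omega)=\mathrm{Exp}\bigl((J_0,\omega)\cdot\phi(\xi)\bigr)$ is smooth, then $(J_0\cdot\phi(\xi),\omega\cdot\phi(\xi))$ is a smooth point of $T\mathcal E$, so (H6) gives $\xi$ smooth and naturality of the action gives $\omega$ smooth; the forward direction is the same equivalences read in the other order. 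So you need to replace both of your regularity steps by this single equivariance-plus-naturality argument; your final paragraph (transfer of the section property and of (MC) from $K^l$ to $K^\infty$) is fine once that is done.
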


\begin{proof}
	By (H8), the exponential map is invariant by pull-back by diffeomorphisms, hence commutes with Lie derivatives, so is a homeomorphism from the set of smooth points of $T\mathcal E$ to the set of smooth points of $\mathcal E$ (cf. \cite[Theorem 7.5]{Ebin}). This, together with (H6) implies that the map \eqref{expN} is also a homeomorphism from the set of smooth points of $E^\perp\times F^\perp$ onto that of $\mathcal E$. Then, we deduce that both formulas \eqref{Kuranishi} and \eqref{inverse} preserve the $C^\infty$ class.
\end{proof}

We collect all the previous results in the following statement.

\begin{theorem}
	\label{mainsmooth}
	Let $X$ be a compact smooth manifold. Consider $\mathcal E$, $\mathcal I$ and $G$ as in Section {\rm \ref{newsetting}}. Let $J_0\in \mathcal I$. Assume {\rm (H1$l$), (H2$l$), (H3$l$)} for some $l$, or {\rm (H2diff), (H3diff)}. Assume also {\rm (H6), (H7$l$)} and {\rm (H8)}.
	\begin{enumerate}
		\item  There exists a local section $K^\infty$ to the $G$-action for $C^\infty$ structures and {\rm \eqref{Kuranishi}} is an isomorphism.
		\item Assume {\rm (H4$l$)} and {\rm (H5$l$)}. Then $K^\infty$ is a Kuranishi type space for $C^\infty$ structures. And it is a local moduli section if and only if {\rm (FG)} property is true.
		\item Assume {\rm (H4'$l$)} and {\rm (H5'$l$)}. Then $K^\infty$ is a local moduli space for $C^\infty$ structures.
	\end{enumerate} 
\end{theorem}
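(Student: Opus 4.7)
The plan is to deduce each of the three parts by applying the corresponding assertion of Theorem \ref{main} at the Sobolev level $l$ and then transferring the conclusions to the $C^\infty$ class using the tools of Section \ref{smoothsobolev}. For part (1), under either {\rm (H1$l$)--(H3$l$)} directly, or via {\rm (H2diff)--(H3diff)} together with Proposition \ref{Pelliptic}, Theorem \ref{main}(1) provides a Sobolev local section $K^l$ on which \eqref{Kuranishi} is an isomorphism. Using {\rm (H7$l$)}, I would choose the transverse slice $\tilde K$ via the exponential map of the invariant connection as in \eqref{Ktilde}; Proposition \ref{expCinfini} together with Proposition \ref{Cinfini} (which exploits {\rm (H6)} and {\rm (H8)}) guarantee that both \eqref{Kuranishi} and its inverse \eqref{inverse} preserve the $C^\infty$ class. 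Defining $K^\infty$ as the set of $C^\infty$ points of $K^l$, restriction of \eqref{Kuranishi} yields an isomorphism onto a $C^\infty$ neighborhood of $J_0$ in $\mathcal I$, establishing (1).

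For part (2), adding {\rm (H4$l$)} and {\rm (H5$l$)}, Theorem \ref{main}(2) gives that $K^l$ satisfies the minimality condition (MC) at the Sobolev level. To transfer (MC) to $K^\infty$, I would observe that any continuous path $c:[0,\epsilon)\to G$ starting at $e$ is, a fortiori, a continuous path into $G^{l+k}$, so the associated path $t\mapsto \Xi(J_0\cdot c(t))$ in $K^l$ must be identically $J_0$ by Sobolev-level (MC). Since $J_0$ is itself of class $C^\infty$ by {\rm (H6)}, this constant value lies in $K^\infty$, so (MC) holds in $K^\infty$ and $K^\infty$ is a Kuranishi type space. The equivalence with being a local moduli section under the (FG) property is then immediate from Proposition \ref{MC+FG}, whose formulation and proof remain valid in the $C^\infty$ category verbatim.

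For part (3), adding {\rm (H4'$l$)} and {\rm (H5'$l$)}, Theorem \ref{main}(3) gives that $K^l$ is a local moduli space at the Sobolev level, i.e. $J\in K^l$, $g\in W$, $J\cdot g\in K^l$ implies $J\cdot g = J$. Restricting this implication to $C^\infty$ data yields the analogous statement for $K^\infty$ and smooth $g$, as required. The main subtlety throughout is not any individual part---each is a direct restriction from the Sobolev setting---but rather the upstream verification, already carried out in Propositions \ref{expCinfini} and \ref{Cinfini}, that the exponential-map description of $\tilde K$ is compatible with the slice in Proposition \ref{lcexistence} and that smoothness is genuinely preserved on both sides of \eqref{Kuranishi}. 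This is where the invariance hypothesis {\rm (H7$l$)} is essential, so that the $G$-action and $C^\infty$-regularity interact compatibly, and where the naturality hypothesis {\rm (H8)} forces the exponential to commute with pullback. Once this compatibility is granted, the theorem is essentially the packaging of Proposition \ref{Cinfini} with Theorem \ref{main}.
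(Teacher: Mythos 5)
Your proposal is correct and follows essentially the same route as the paper, which offers no separate proof but explicitly presents Theorem \ref{mainsmooth} as the collection of Theorem \ref{main} (via Proposition \ref{Pelliptic} in the elliptic case) with Propositions \ref{expCinfini} and \ref{Cinfini}, exactly the assembly you describe. Your explicit restriction arguments transferring (MC) and the local-moduli property from $K^l$ to $K^\infty$ merely spell out what the paper leaves implicit in the phrase ``a Kuranishi type space if $K^l$ is.''
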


\section{Three classical examples}
\label{examples}

In this part, we run the previous definitions and propositions in three classical cases: complex structures, riemannian metrics and ASD connections.

\subsection{Complex structures}
\label{complexe}

This is the foundational example, which inspires in all the definitions we gave. Here $\mathcal E$ is the set of almost complex operators of, say, class $W^{l}$, and $\mathcal I$ is the subspace of integrable ones, hence of complex structures. Then $G$ is the group of diffeomorphisms
of class $W^{l+1}$. It is a classical fact that $\mathcal E$ is a Hilbert manifold over $\mathbb C$, and $\mathcal I$ a closed subset. It is even an analytic subspace in the sense of \cite{Douady}. Let $J_0$ be a complex structure of class $C^\infty$. Then $T$ is the
Hilbert space of $(1,0)$-vector fields of class $W^{l+1}$ on $X$ (for the structure $J_0$), whereas $T_{J_0}\mathcal E$ is the Hilbert space of $(0,1)$-forms of same class with values in $T$. The map $\phi$ is defined as the exponential of some analytic riemannian metric, see \cite{Kur1} for more details. Considering $G$ as a subset of the set of maps of class $W^{l+1}$ from $X$ onto the complex manifold $(X, J_0)$, then we get a $\mathbb C$-analytic structure on $G$ with chart \eqref{Gcarte}.
\vspace{5pt}\\
The space $E$ is the vector space of $J_0$-holomorphic vector fields. We also have
$$
L(\xi,\omega)=\omega+\bar\partial \xi
$$
so that $P=\bar\partial$. It is known to be an elliptic operator on vector fields and so (H2diff) and (H3diff) are satisfied. It can also be easily checked that hypotheses (H4) and (H5) are satisfied (just take the time 1 flow of a vector field as map $\psi$; since $G_{J_0}$ is the automorphism group of $(X,J_0)$, by definition, it contains only holomorphic, hence $C^\infty$, maps). As a consequence, we may apply Theorem \ref{main} and there is a minimal local section $K$. Using remark \ref{star} and the integrability condition given in \cite{Kur1}, it is given by
\begin{equation}
\label{Kcomplexe}
K=\{\omega\in U\mid  \bar\partial\omega+\dfrac{1}{2}[\omega,\omega]=\bar\partial^*\omega=0\}.
\end{equation}
It is important to notice that $K$ is not only a closed subset but also has a natural structure of analytic space with Zariski tangent space \eqref{Zariski} at the the base point. More precisely, it is given by the kernel of the $\bar\partial$-laplacian, a strongly elliptic operator. Hence it is finite dimensional and contains only $C^\infty$ solutions (and there is no dependance at all in the class $l$ which explains that we denoted it as $K$ and not $K^l$).
\vspace{5pt}\\
Moreover,
(FG) property is true by Fischer-Grauert's theorem, so $K$ is indeed a local moduli section.
\vspace{5pt}\\
Last, but not least, it follows directly from comparing \eqref{Kcomplexe} with \cite{Kur1} that the germ of $K$ at $J_0$ is the Kuranishi space of $(X,J_0)$ in the classical sense. Indeed, it is proven in \cite{modules} that versality is equivalent to being a local moduli section.
\vspace{5pt}\\
Finally, it is known (\cite{Wa} or \cite{KuranishiNote}), that $K$ is not in general a local moduli space, but that it 
is as soon as the dimension of the space of $J$-holomorphic vector fields on $X$ is constant when $J$ varies in $K$. 
\vspace{5pt}\\
Let us compare with Proposition \ref{LMS}.  Indeed, \cite{KuranishiNote} contains the construction of a map $\psi$ satisfying (H4'). And (H5') follows easily, taking into account that all isotropy groups contain only $C^\infty$ elements. So we have

\begin{proposition}
\label{complex}
 Consider the case of complex structures. Then,
 \begin{enumerate}
  \item Properties {\rm (H2diff), (H3diff), (H4), (H5)} as well as {\rm (FG)} are always satisfied.
  \item Any Kuranishi type space $K$ is isomorphic (as a germ) to the Kuranishi space of $(X, J_0)$.
  \item The same $K$ is a Kuranishi type space for both smooth and Sobolev structures.
  \item Conditions {\rm (H4')} and {\rm (H5')} are satisfied if and only if the dimension of the space of $J$-holomorphic vector fields on $X$ is constant when $J$ varies in $K$.
 \end{enumerate}

\end{proposition}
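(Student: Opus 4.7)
The plan is to verify each of the four items in turn, using the already-established machinery of Theorem \ref{main} and Theorem \ref{mainsmooth}, and the description of the Kuranishi slice recalled in the paragraphs preceding the statement.

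For item (1), I would first record that the linearization of the action reads $L(\xi,\omega)=\omega+\bar\partial\xi$, so that $P=\bar\partial$. Since $\bar\partial$ is a first-order differential operator with $C^\infty$ coefficients and injective principal symbol, {\rm (H2diff)} and {\rm (H3diff)} follow. The isotropy group $G_{J_0}$ is the biholomorphism group $\mathrm{Aut}(X,J_0)$, a finite-dimensional complex Lie group whose Lie algebra $E$ is the space of $J_0$-holomorphic vector fields; taking $\psi$ to be the time-$1$ flow of vector fields in $E$ produces a chart landing in $C^\infty$ diffeomorphisms, which establishes {\rm (H4)}. For {\rm (H5)}, the crucial point is that any $g\in G_{J_0}$ is $C^\infty$, so $dg$ does not degrade Sobolev regularity and the composition $(g,h)\mapsto g\circ h$ is $C^1$ on $G_{J_0}\times G$, with differential $(\xi,\eta)\mapsto\xi+\eta$ at $(e,e)$. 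Finally {\rm (FG)} is the smooth Fischer-Grauert theorem, cf.\ \cite{PS}.

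For item (2), once {\rm (H2diff)}-{\rm (H5)} are in place, Theorem \ref{main} produces a Kuranishi type space $K$; using Remark \ref{star} to identify the slice normal to the $G$-orbit with $\ker\bar\partial^*$, the description \eqref{Kcomplexe} of $K$ matches the one constructed by Kuranishi in \cite{Kur1}. Combined with {\rm (FG)}, Proposition \ref{MC+FG} says $K$ is a local moduli section, and the equivalence between being a local moduli section and being versal proved in \cite{modules} identifies the germ of $K$ at $J_0$ with the classical Kuranishi space. For item (3), the Zariski tangent space \eqref{Zariski} is here the kernel of $\bar\partial+\bar\partial^*$ on $T_{J_0}\mathcal E$, i.e.\ the harmonic $(0,1)$-forms with values in $T$, which is finite-dimensional and sits in $\Gamma^\infty$; elliptic bootstrapping applied directly to the defining equation \eqref{Kcomplexe} shows that every $\omega\in K$ is smooth, so $K=K^\infty=K^l$ independently of the Sobolev class $l$, and Theorem \ref{mainsmooth} then yields the same $K$ as a Kuranishi type space in both the smooth and Sobolev settings.

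For item (4), the ``only if'' is immediate: {\rm (H4')} demands that $\psi(J,\cdot)$ parametrize $G_J\cap W$ by the fixed model $E\cap V$, which forces $\dim_{\mathbb C} E_J=\dim_{\mathbb C} E$, hence constancy of $\dim \mathrm{Aut}^0(X,J)$ on $K$. For the ``if'' direction, assuming this dimension is constant in $J\in K$, the construction in \cite{KuranishiNote} produces a smooth family of charts for the isotropy groups $G_J$ fulfilling {\rm (H4')}; then {\rm (H5')} reduces to the composition argument of item (1), now run uniformly over $J$ and using that every $G_J$ consists of $J$-holomorphic, hence $C^\infty$, diffeomorphisms. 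The recurrent technical obstacle throughout these verifications is the same one identified in the remark after {\rm (H5)}: the composition map on Sobolev diffeomorphism groups fails to be $C^1$ in general, and is salvaged here only because holomorphic automorphisms are automatically smooth.
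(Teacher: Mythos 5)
Your proposal is correct and follows essentially the same route as the paper: $P=\bar\partial$ elliptic for (H2diff)/(H3diff), smoothness of elements of $\mathrm{Aut}(X,J_0)$ for (H4)/(H5), Fischer--Grauert for (FG), the comparison of \eqref{Kcomplexe} with \cite{Kur1} together with the versality/local-moduli-section equivalence of \cite{modules} for item (2), smoothness and finite-dimensionality of $K$ via ellipticity of \eqref{Zariski} for item (3), and the chart of \cite{KuranishiNote} plus smoothness of all isotropy groups for item (4). The only superfluous step is your appeal to Theorem \ref{mainsmooth} in item (3), whose hypotheses (H6)--(H8) you do not verify; it is not needed, since your elliptic bootstrapping already shows $K=K^l=K^\infty$, which is exactly how the paper (via the discussion in Section \ref{diff}) concludes.
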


\begin{remark}
\label{Douadyreduced}
To be precise, $K$ as an analytic space is not always reduced, hence does not always identifies with $K$ as an analytic set. Hence, there are slight differences between Proposition \ref{complex} and the results in the literature on deformations of complex structures. For example, Douady proved that isomorphism \eqref{Kuranishi} is indeed an isomorphism of Banach $\mathbb C$-analytic spaces, cf. \cite{Douady}. Here, we just recover the isomorphism between the {\it reductions} of the involved spaces. Indeed, to avoid all the difficulties, one can read Proposition \ref{complex} replacing $K$ with its reduction. 
\medskip\\
However, it must be noticed that point (ii) of Proposition \ref{complex}, namely the equivalence between Kuranishi space and Kuranishi type space, is shown to be an isomorphism of analytic spaces even in the non-reduced case in \cite{modules} by imposing that \eqref{retraction} is analytic.
\end{remark}

\begin{remark}
Corollary \ref{H0=0} is nothing else in this context that the statement: if $H^0(X_J,\Theta_J)$ is zero for all $J\in K$, then the Kuranishi space is a local moduli space (also called universal). Indeed, due to the semi-continuity results of \cite{K-S}, it is enough to have $H^0(X_{J_0},\Theta_{J_0})$ equal to zero.
\end{remark}

\begin{remark}
If we consider the problem of deforming  couples (complex structure, additional geometric structure), assuming that $P$ is still a differential operator, then $P$ is automatically elliptic since the first component of its symbol is injective. Moreover, the automorphism group of the base structure contains only holomorphic transformations. Hence hypotheses (H2diff), (H3diff), (H4) and (H5) are automatically satisfied, and there always exists a Kuranishi type space. This applies for example to the case of symplectic holomorphic structures.
\end{remark}

\subsection{Riemannian metrics}
\label{riemann}
The case of riemannian metrics on a smooth compact manifold $X$ is due to Ebin \cite{Ebin}. It perfectly fits to this setting. Here $\mathcal E=\mathcal I$ is the set of $W^l$ riemannian metrics on $X$, encoded as the open positive convex cone of definite positive symmetric $2$-tensors. This is an open set of the Hilbert space of symmetric contravariant $2$-tensors. The group $G$ is the set of diffeomorphisms of class $W^{l+1}$ of $X$ acting by pullback on $\mathcal E$, so $T$ is just the vector space of $W^{l+1}$-vector fields on $X$. Let $g_0\in\mathcal E$. The (riemannian) exponential map associated to $g_0$ can be used as map $\phi$. By a direct computation,
$$
(\xi, h)\in T\times T_{g_0}\mathcal E\longmapsto L(\xi,h)=h+\mathcal L_\xi g_0
$$
where $\mathcal L$ is the Lie derivative (cf. \cite{Ebin}, Lemma 6.2). So $P$ is just the Lie derivative of $g_0$. It is elliptic by \cite{Ebin}, Proposition 6.10, hence (H2diff) and (H3diff) are satisfied. 
\vspace{5pt}\\
Hence, we may apply proposition \ref{lcexistence} and obtain a local section. Also (H4) is satisfied as well as (H5) by defining $\psi$ as the exponential map associated to $g_0$. Therefore the local section of \cite{Ebin} is a Kuranishi type space.
\vspace{5pt}\\
Moreover, the Kuranishi type space $K$ of \cite{Ebin} enjoys the following property: if $f$ is a diffeomorphism such that $K\cdot f$ intersects $K$, then $f$ must be an isometry of $g_0$, \cite{Ebin}, Theorem 7.1. This implies (FG) property, since if $c$ is a continuous path of $K$ all of whose points encode $g_0$, then all points are in fact equal to $g_0\cdot\phi$, with $\phi$ an isometry of $g_0$. Hence the path $c$ is constant.
\vspace{5pt}\\
Finally, it is proven in \cite{Ebin} that if the isotropy group of $g_0$ is the identity, it is still the identity for $g$ close to $g_0$. So in this case, (H4') and (H5') are satisfied and we obtain a local moduli space (this is indeed a direct application of Corollary \ref{H0=0}). Last but not least, it is proven in \cite{Ebin}, Theorem 7.4, that the result are still valid for $C^\infty$ metrics by taking as $K$ the subset 
of $C^\infty$ points of $K^l$. Indeed, Ebin shows the existence of a smooth invariant riemannian metric on $\mathcal E$, hence a smooth invariant affine connection. The result follows now from
Proposition \ref{Cinfini}. Hypothesis (H6) is proved in \cite{Ebin}, Proposition 6.13. To sum up,

\begin{proposition}
\label{Ebincase}
 Consider the case of riemannian metrics. Then,
 \begin{enumerate}
  \item Properties {\rm (H2diff), (H3diff), (H4), (H5)} are always satisfied, so given a riemannian metric $g_0$, for all $l$, it has a Kuranishi type space $K^l$ given as a neighborhood of $0$ in the kernel of $P^*$.
\item {\rm (FG)} property is fulfilled so $K^l$ is a local moduli section. 
  \item If the isotropy group of $g_0$ is the identity (which is the case on an open and dense subset of $\mathcal E$), then conditions {\rm (H4')} and  {\rm (H5')} are satisfied and $K^l$ is a local moduli space.
\item Properties {\rm (H6), (H7)} and {\rm (H8)} are satisfied. Hence, defining $K^\infty$ as the subset of $C^\infty$ points of $K^l$, then (1), (2) and (3) are still valid for $C^\infty$ metrics.
 \end{enumerate}

\end{proposition}

\begin{remark}
\label{remarkEbin}
In \cite{Ebin}, the author constructs two riemannian metrics on $\mathcal E^l$, the strong one and the weak one. Here, to run Proposition \ref{Cinfini}, we use  the strong riemannian metric, which depends on a particular choice of $l$. Ebin prefers using the weak one (weak in the sense that it induces on each tangent space to $\mathcal E^l$ the $L^2$ topology and not the required $W^l$ topology), because it has the advantage of being independent of $l$. However, with such a weak metric, the existence of the affine connection and the exponential map is not immediate.  
\end{remark} 

\subsection{ASD connections}
\label{ASD}
The case of ASD connections is due to Donaldson, see \cite[\S 4.2]{DonaldsonKronheimer}. Let $(X,h)$ be a compact, oriented, riemannian $4$-manifold. Let $E$ be a complex vector bundle over $X$ with first Chern class equal to zero. Define $\mathcal E$ as the space of connexions on $E$ compatible with $h$ and inducing the trivial connection on $\text{Det }E$. As usual, consider $W^l$ connections, for $l>1$. The space $\mathcal E$ is an affine space. Its associated vector space is the space of $1$-forms with values in $su(E)$. The set $\mathcal I$ of ASD connections is defined as those connections whose self-dual part of the curvature tensor is zero. Hence $\mathcal I$ is given as the zero set of a smooth map $F^+$ from $\mathcal A$ onto the vector space of $2$-forms with values in $su(2)$. The group $G$ is the gauge group of $E$ of class $W^{l+1}$, that is the group of sections of $SU(E,h)$. It acts on $\mathcal A$ by conjugation. We may take the Lie group exponential in the fibers as map $\phi$.\vspace{5pt}\\
At a point $A\in\mathcal A$, the operator $P$ is $-d_A$, for $d_A$ the covariant derivative going from the space of $W^{l+1}$ sections of $su(E)$ to the space of $W^l$ $1$-forms with values in $su(E)$. It is elliptic hence (H2diff) and (H3diff) are satisfied.\vspace{5pt}\\
The isotropy group of a connection $A$ is a finite-dimensional Lie group tangent to the kernel of the operator $d_A$. Hence (H4) is fulfilled and we may apply Proposition \ref{Pelliptic} to conclude that there exists a Kuranishi type space at $A$. 
Assume now that $A$ is an irreducible connection, that is with holonomy group being the full group $SU(2)$. Then its isotropy group is just $\pm Id$. Hence (H4') and (H5') are fulfilled and the Kuranishi type space is indeed a local moduli space. 
To sum up, we have
\begin{proposition}
 \label{ASDcase}
 Let $(X,h)$ be a compact, oriented, riemannian $4$-manifold. Let $E$ be a complex vector bundle over $X$ with first Chern class equal to zero. Consider $\mathcal E$, $\mathcal I$ an $G$ as above. Finally, let $A$ be an ASD connection. 
 
 Then, 
\begin{enumerate}
 \item Hypotheses {\rm (H2diff), (H3diff)} and {\rm (H4)} are fulfilled, hence the set
  \begin{equation}
  \label{KASD}
 K=\{\omega \in A^1(su(E))\mid F^+\omega=d_A^*\omega=0\}.
 \end{equation}
is a Kuranishi type space at $A$.
\item Assume moreover that $A$ is irreducible. Then {\rm (H4')} and {\rm (H5')} are fulfilled and {\rm \eqref{KASD}} is a local moduli space.
\end{enumerate}
 \end{proposition}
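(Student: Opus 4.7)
The plan is to apply Proposition \ref{Pelliptic} for part (1) and Corollary \ref{H0=0} for part (2), checking the relevant hypotheses one by one and identifying the explicit slice.

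First I would verify (H2diff). In the chosen chart $\phi$ given by the fibrewise Lie group exponential of $SU(E,h)$, a curve $\xi \mapsto \phi(\xi) \in G$ starting at the identity with initial velocity $\xi \in \Gamma(su(E))$ acts on a connection $A + \omega$ by conjugation plus a Maurer--Cartan correction, and a standard computation (see e.g.\ \cite{DonaldsonKronheimer}) shows that the differential of $(\xi,\omega)\mapsto (A+\omega)\cdot\phi(\xi)$ at $(0,A)$ equals $\omega - d_A\xi$. Hence $L(\xi,\omega) = \omega + P\xi$ with $P = -d_A$, and (H2diff) holds. For (H3diff), the symbol of $d_A$ at $(x,\eta)$ with $0\neq\eta\in T_xX^*$ is $v \mapsto \eta \otimes v$, which is injective; since $A$ is smooth the coefficients of $P$ are smooth, so $P$ is elliptic in the sense of the paper.

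Next I would check (H4). The isotropy group $G_A$ consists of gauge transformations $g$ with $d_A g = 0$. Since $d_A$ acts elliptically on $\Gamma(su(E))$ its kernel $E$ is finite-dimensional and contains only $C^\infty$ sections, and the fibrewise Lie group exponential provides the required smooth map $\psi : E\cap V \to G_A\cap W$. Now Proposition \ref{Pelliptic} applies and yields a Kuranishi type space $K^l$. To identify it with \eqref{KASD} I would invoke Remark \ref{star}: since $P = -d_A$ is a differential operator, the closed complement of $F = \mathrm{Im}\, d_A$ may be chosen as $F^\perp = \ker d_A^*$, so the slice $\tilde K$ consists of $1$-forms $\omega$ with $d_A^*\omega = 0$. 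Intersecting with $\mathcal I$ amounts to imposing that $A+\omega$ be ASD, which since $F^+_A = 0$ reduces to $F^+(A+\omega) = d_A^+\omega + \tfrac{1}{2}[\omega,\omega]^+ = 0$, written compactly as $F^+\omega = 0$ in the paper's notation. This gives exactly \eqref{KASD}.

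For part (2), I would use that irreducibility is an open condition: the holonomy of a connection $A'$ close to $A$ in $\mathcal E$ stays inside a neighbourhood of the holonomy of $A$, so if $A$ has holonomy $SU(2)$ then so do all connections in a neighbourhood, and in particular all $A' \in K$ after shrinking. Consequently $G_{A'} \cap W = \{e\}$ for every $A' \in K$ (after possibly shrinking $W$, since $-\mathrm{Id}$ is not close to $\mathrm{Id}$). This places us in the hypotheses of Corollary \ref{H0=0}, which directly yields (H4') and (H5') with $\psi$ the constant map to $e$, so $K$ is a local moduli space.

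The step I expect to demand the most care is the identification of $K$ with \eqref{KASD}: one must line up the slice $d_A^*\omega = 0$ coming from the $L^2$ Hodge-type decomposition with the integrability equation $F^+(A+\omega) = 0$, and then invoke ellipticity of the combined linearisation $d_A^+ \oplus d_A^*$ (this is what guarantees that \eqref{KASD} has the correct finite-dimensional Zariski tangent space and reduces to $C^\infty$ solutions, as in Remark \ref{important}). The remaining verifications, once (H2diff)--(H4) are in hand, are essentially formal applications of the general results already proved in the paper.
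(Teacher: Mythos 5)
Your proposal is correct and follows essentially the same route as the paper: identify $P=-d_A$, check its ellipticity to get (H2diff) and (H3diff), use the finite-dimensional isotropy group tangent to $\ker d_A$ for (H4), apply Proposition \ref{Pelliptic} together with Remark \ref{star} to obtain the slice $d_A^*\omega=0$ cut out by the ASD equation, and for the irreducible case use that the stabilizer is $\pm\mathrm{Id}$ so that (H4') and (H5') hold via Corollary \ref{H0=0}. Your additional remarks (openness of irreducibility so that the trivial-stabilizer condition holds on all of $K$, and ellipticity of $d_A^+\oplus d_A^*$ for the tangent space) are correct elaborations of points the paper leaves implicit.
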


\begin{remark}
 \label{ASDsuite}
 Assume that $X$ is simply connected and $c_2(E)$ is not zero. Using Fredholm theory and a Sard type theorem, one then shows that $K$ is a finite-dimensional manifold if the metric $h$ is generic. Moreover, still by genericity of $h$, we may assume that all connections in $\mathcal I$ are irreducible. Hence the whole space $\mathcal I/G$ is a finite-dimensional manifold. Also, a direct argument shows that $K$ does not depend on $l$ up to homeomorphism, see \cite[\S 4.2]{DonaldsonKronheimer} for further details.
\end{remark}

\section{Deforming sasakian manifolds}
\label{sasakian}

\subsection{Background}
\label{background}

We start with some classical facts about sasakian manifolds, see \cite{BoyerBook} and \cite{Sparks} for more details.

\begin{definition}
\label{defsasakien}
 A compact smooth riemannian manifold $(S,g)$ is called a {\it sasakian manifold} if the cone $S\times\mathbb{R}^{>0}$ admits a complex structure which is 
 K\"ahler for the metric $r^2g+dr\otimes dr$ (where $r$ is the coordinate on $\mathbb{R}^{>0}$).
\end{definition}

A sasakian manifold comes equipped with many structures. Identifying $S$ with the hypersurface $S\times\{1\}$ of its cone and denoting by 
$J$ the complex operator on the cone, we have:

\begin{itemize}
 \item The unit vector field 
 $$
 \xi:=J\left (r\dfrac{\partial}{\partial r}\right )
 $$ 
 is tangent to $S$, acts by isometries on $(S,g)$, and is called the {\it Reeb vector field}.
 \item The contact form 
 $$
 \eta:=J\left (\dfrac{dr}{r}\right )
 $$ 
 is tangent to $S$ and satisfies 
 \begin{equation}
  \label{Reebcontact}
   i_\xi \eta\equiv 1\qquad\text{ and }\qquad i_\xi d\eta\equiv 0.
 \end{equation}
 \item The operator defined by 
 \begin{equation}
\label{Phi}
 \Phi(\xi):=0\qquad\text{ and }\qquad\Phi(V):=JV\text{ on Ker }\eta
 \end{equation}
 is an endomorphism of $TS$ which induces an integrable CR operator on $D:=\text{Ker }\eta$.
\end{itemize}

A sasakian manifold enjoys the following properties.

\begin{itemize}
 \item $\xi$ acts by CR isomorphisms, i.e. its flow preserves $D$ and $J$.
 \item The foliation $\mathcal F$ induced by $\xi$ on $S$ is transversely K\"ahler, with holomorphic normal bundle identified with $D$, and transverse K\"ahler 
 form $\omega:=d\eta$.
 \item The CR structure $(D, J)$ is strictly pseudo-convex with Levi form equal to $\omega$.
\end{itemize}

We denote by $(S,g,\xi,\eta, \Phi)$ a sasakian manifold. Note that 
\begin{equation}
 \label{JPhi}
 J\equiv \Phi+\dfrac{1}{r}dr\otimes \xi-\eta\otimes r\dfrac{\partial}{\partial r}.
\end{equation}
and, for $V$ tangent to $S$,
$$
\Phi(V)=J(V-\eta(V)\xi).
$$

All these datas are not independent, for example fixing $\eta$ and $g$ gives a unique $\xi$ through (\ref{Reebcontact}), and a unique $\Phi$
through
\begin{equation}
\label{nabla}
 \Phi(V)=\nabla_\xi V
\end{equation}
where $\nabla$ is the Levi-Civita connection of $g$. Moreover, $g$ and $\eta$ are related one to the other through the equations
\begin{equation}
 \label{geta}
 \eta(V)=g(\xi,V)\qquad\text{ and }\qquad g(V,W)=\dfrac{1}{2}d\eta(V,\Phi(W))+\eta(V)\eta(W).
\end{equation}

However for our deformation purposes, it is important to keep
track of these four structures, as well as the associated structures $\mathcal F$ and $(D,J)$ on $S$.
Indeed, when deforming sasakian structures, one has to be very careful and precise about which structure(s) is (are) fixed, and which is (are) deformed; 
and one has to
decide if we only consider deformations which are still sasakian or allow general deformations (in some problems there is no difference but not in all).
This is not always the case in the existing literature.
\vspace{5pt}\\
We first focus on the sasakian deformations of the contact form $\eta$, keeping the transverse holomorphic structure of $\mathcal F$ fixed. Recall that the normal bundle to $\mathcal F$ is the quotient bundle
$$
N\mathcal F:=TS/T\mathcal F
$$
and that the transverse holomorphic structure of $\mathcal F$ is determined by a splitting of the complexified normal bundle to the foliation
$$
N_{\mathbb C}\mathcal F:=N\mathcal F\otimes\mathbb C=N^{0,1}\mathcal F\oplus N^{1,0}\mathcal F
$$
into $(1,0)$ and $(0,1)$ vectors. The subbundles $N^{0,1}\mathcal F$ and $N^{1,0}\mathcal F$
are complex conjugate and involutive (for the quotient Lie bracket). Fixing the transverse holomorphic structure means keeping $N^{0,1}\mathcal F$ fixed.
\vspace{5pt}\\
Equivalently, looking at the natural projection map
\begin{equation}
\label{pi}
\pi\ :\ T_{\mathbb C}S\longrightarrow N_{\mathbb C}\mathcal F\longrightarrow N^{1,0}\mathcal F
\end{equation}
and setting
\begin{equation}
\label{E}
E=\text{Ker }\pi
\end{equation}
the transverse holomorphic structure is given by the involutive subbundle $E$ of $T_{\mathbb C}S$, so fixing the transverse holomorphic structure means fixing $E$ (cf. \cite{MarcelPS}). In the sasakian case, observe that
\begin{equation}
\label{transverse}
E=D^{0,1}\oplus \mathbb C\xi
\end{equation}
where $D^{0,1}$ is the subbundle of $(0,1)$-vectors of the complexification of the CR distribution $(D,J)$. In particular, given $E$, we have
$$
D^{0,1}=E\cap D_{\mathbb C}
$$
so $J$ is uniquely determined on $D_{\mathbb C}$ and thus on $D$: it acts as multiplication by $-i$ on $D^{0,1}$ and as
multiplication by $+i$ on its complex conjugate.
\vspace{5pt}\\
The reason for dealing with this problem is that this is perhaps the simplest case where infinite-dimensionality occurs, so that the classical theory of deformations does not 
apply. 
\vspace{5pt}\\
Observe that $E$ being fixed, $\xi$ is only changed by a multiplicative factor. As a variant to this deformation problem, one can deform $\eta$ keeping $E$ and $\xi$ fixed. The resulting Kuranishi type space will be essentially the same (see Corollary \ref{KuranishiEbisfixed}).

\begin{remark}
\label{Phifixed}
If $\Phi$ is fixed, then so is $E$, simply because 
$$
E=\text{Im }(Id+i\Phi).
$$
However, the converse is false. Indeed, $\Phi$ determines also $D^{0,1}$ as the kernel of $\Phi+iId$ (and thus $D$), whereas $E$ does not. In other words, fixing $E$ means fixing $N^{0,1}\mathcal F$, whereas fixing $\Phi$ means fixing $D^{0,1}$, which is a precise realization of $N^{0,1}\mathcal F$ as a subbundle of $T_\mathbb C S$. Deformations of $S$ with $\Phi$ fixed correspond to deformations of the induced polarized CR structure as defined and studied
in \cite{MeIMRN}.
\end{remark}

Note that we cannot fix more structures, since we have

\begin{lemma}
Let $(S,g,\eta,\xi,\Phi)$ and $(S,g',\eta',\xi',\Phi')$ be two sasakian structures on the same
manifold $S$. 
\begin{enumerate}
\item If $g=g'$ and $\xi'=\xi$, then $\eta'=\eta$ and $\Phi'=\Phi$ so both structures are the same.
\item If $\xi'=\xi$ and $\Phi'=\Phi$, then $\eta'=\eta$ and $g'=g$ so both structures are the same.
\end{enumerate}
\end{lemma}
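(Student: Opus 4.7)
The proof is essentially bookkeeping using the identities (\ref{Reebcontact}), (\ref{Phi}), (\ref{nabla}) and (\ref{geta}) that were recalled just before the statement. Nothing deep is at stake; the only point requiring a little care is part (2), where one has to recover $\eta$ before recovering $g$.

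For part (1), assume $g=g'$ and $\xi=\xi'$. The first equation of (\ref{geta}) gives
\[
\eta(V) = g(\xi,V) = g'(\xi',V) = \eta'(V),
\]
so $\eta=\eta'$. Since $g=g'$ the Levi-Civita connections $\nabla$ and $\nabla'$ coincide, and then (\ref{nabla}) yields $\Phi(V)=\nabla_\xi V = \nabla'_{\xi'}V = \Phi'(V)$. So the two structures agree.

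For part (2), assume $\xi=\xi'$ and $\Phi=\Phi'$. The key preliminary observation is that $\Phi$ determines $\ker\eta$: by (\ref{Phi}) we have $\Phi(\xi)=0$, while on $D=\ker\eta$ the operator $\Phi$ acts as the CR operator $J$ and is in particular injective, so
\[
\ker\Phi = \mathbb{R}\xi, \qquad \mathrm{Im}\,\Phi = D = \ker\eta.
\]
The same identities applied to $(\eta',\xi',\Phi')$ give $\ker\eta' = \mathrm{Im}\,\Phi' = \mathrm{Im}\,\Phi = \ker\eta$. Since $\eta$ and $\eta'$ are $1$-forms with the same kernel and both send $\xi=\xi'$ to $1$ (by (\ref{Reebcontact})), they coincide. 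Then $d\eta=d\eta'$ and the second formula in (\ref{geta}) gives
\[
g(V,W) = \tfrac{1}{2}d\eta(V,\Phi(W)) + \eta(V)\eta(W) = \tfrac{1}{2}d\eta'(V,\Phi'(W)) + \eta'(V)\eta'(W) = g'(V,W),
\]
so the two structures agree.

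The only conceptual step is noting that $\Phi$ already pins down the contact distribution through $\mathrm{Im}\,\Phi = \ker\eta$; once that is in hand, everything else is substitution into the preceding formulas, so I do not expect any real obstacle.
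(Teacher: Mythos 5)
Your proof is correct and follows essentially the same route as the paper: part (1) via the two identities $\eta(\cdot)=g(\xi,\cdot)$ and $\Phi=\nabla_\xi$, and part (2) by first noting that $\Phi$ determines the distribution $D$ (the paper invokes Remark \ref{Phifixed}, recovering $D^{0,1}$ as $\ker(\Phi+i\,\mathrm{Id})$, while you use $\mathrm{Im}\,\Phi=D$ — an equivalent observation), then pinning down $\eta$ by its kernel and its value on $\xi$, and finally $g$ via \eqref{geta}.
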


\begin{proof}
If $g=g'$ and $\xi'=\xi$, then $\eta'$ is equal to $\eta$ because of (\ref{geta}) and $\Phi'$ is equal to $\Phi$ because of (\ref{nabla}).
\vspace{5pt}\\
And if we let $\Phi$ fixed, as observed in remark \ref{Phifixed}, then $D$ is fixed. Since $\xi$ is also fixed, then $\eta$ is fixed, since it is zero on $D$, and $1$ on $\xi$. Finally $g$
is fixed because of \eqref{geta}.
\end{proof}

\subsection{Encoding the structures}
\label{prelim}

We first need the following characterization of sasakian manifolds.

\begin{proposition}
\label{charsasakien}
Let $(S,\eta,J)$ be a  triple: compact smooth manifold, contact form, integrable CR-structure on the kernel $D$ of $\eta$. Define $\xi$ using {\rm (\ref{Reebcontact})}. Assume that
\begin{enumerate}
\item $\mathcal L_{\xi}J\equiv 0$.
\item $d\eta(V,JV)>0$ for all non zero $V$ tangent to $D$.
\end{enumerate}
Then, defining $\Phi$ by {\rm (\ref{Phi})} and $g$ by {\rm (\ref{geta})}, the manifold $(S,g,\eta,\xi,\Phi)$ is sasakian.
\end{proposition}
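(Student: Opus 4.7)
The plan is to construct a complex structure $\tilde J$ on the Riemannian cone $C := S \times \mathbb{R}^{>0}$ via formula (\ref{JPhi}), and then verify that $(C, r^2 g + dr \otimes dr, \tilde J)$ is K\"ahler. First I would define $\tilde J$ on the splitting $TC = D \oplus \mathbb{R}\xi \oplus \mathbb{R}\partial_r$ by $\tilde J|_D := J$, $\tilde J\xi := -r\partial_r$, $\tilde J(r\partial_r) := \xi$, so that $\tilde J^2 = -\mathrm{Id}$ is immediate. Before invoking $g$ defined by (\ref{geta}) I would check that it is a genuine Riemannian metric: positivity on $D$ is hypothesis (2); the Reeb relations force $g(\xi,\xi)=\eta(\xi)=1$ and $D$ orthogonal to $\mathbb{R}\xi$; and symmetry on $D$ reduces to $d\eta(V, JW) = d\eta(W, JV)$, which is equivalent to $d\eta|_D$ being $J$-invariant. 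This last property follows from CR integrability via Cartan's formula: for $Z_1, Z_2 \in \Gamma(D^{0,1})$ one has $d\eta(Z_1, Z_2) = -\eta([Z_1, Z_2]) = 0$ since $[Z_1, Z_2] \in \Gamma(D^{0,1}) \subset \Gamma(\ker \eta)$.

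The heart of the proof will be integrability of $\tilde J$. I would invoke Newlander--Nirenberg and show that the $(0,1)$-bundle $T^{0,1}C = D^{0,1} \oplus \mathbb{C}\langle \xi - ir\partial_r\rangle$ is involutive. Three bracket types need checking: $[D^{0,1}, D^{0,1}] \subset D^{0,1}$ is formal CR integrability of $J$ on $D$; $[\xi, Z] \in D^{0,1}$ for $Z \in \Gamma(D^{0,1})$ follows from the Reeb relations ($\mathcal L_\xi \eta = 0$, so $\xi$ preserves $D$) together with hypothesis (1), via $J[\xi, Z] = \mathcal L_\xi(JZ) = -i[\xi, Z]$; and trivially lifted vector fields from $S$ commute with $\partial_r$. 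This step is the main obstacle: it requires using CR integrability and Reeb invariance $\mathcal L_\xi J = 0$ in the correct combination, and recognizing that on the cone the natural complex completion of the transverse direction $\mathbb{C}\xi \subset E$ is spanned by $\xi - ir\partial_r$, not $\xi$ alone.

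Once integrability is in hand, a routine computation on the adapted frame $\{D, \xi, \partial_r\}$ shows that $\tilde g := r^2 g + dr \otimes dr$ is $\tilde J$-Hermitian (again using $J$-invariance of $d\eta|_D$). Finally, I would compute the fundamental $2$-form $\omega(V, W) := \tilde g(\tilde J V, W)$ and observe that it equals $\tfrac{r^2}{2} d\eta + r\, dr \wedge \eta = \tfrac{1}{2} d(r^2 \eta)$, manifestly exact and hence closed. This yields the K\"ahler property, completing the verification that $(S, g, \eta, \xi, \Phi)$ is sasakian in the sense of Definition \ref{defsasakien}.
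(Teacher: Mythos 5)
Your proposal is correct, and its first three steps follow the paper's proof closely: the paper also establishes that $g$ from \eqref{geta} is a genuine metric by using CR integrability to show $d\eta|_D$ is $J$-invariant and the associated bilinear form symmetric (its relations \eqref{3} and \eqref{4} are exactly your type-$(1,1)$ argument $d\eta(Z_1,Z_2)=-\eta([Z_1,Z_2])=0$ in real form), it extends $J$ to the cone by the same formula \eqref{defJ}, and it proves integrability by the same involutivity check, written for $Q^{1,0}=D^{1,0}\oplus\mathbb C(\xi+ir\partial/\partial r)$ rather than your conjugate bundle, using hypothesis (1) for the bracket of $D^{1,0}$ with the extra generator. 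Where you genuinely diverge is the K\"ahler step: the paper proves $d\omega=0$ for $\omega=\bar g(J-,-)$ in two stages, first getting $i_\xi d\omega=0$ from $\mathcal L_\xi\omega=0$ and $i_\xi\omega=-r\,dr$, and then killing the remaining components by a local foliated-coordinate computation with an adapted frame $\{Y_i\}$ of $D$, which ultimately rests on $d(d\eta)=0$; you instead identify the fundamental form directly as $\omega=\tfrac{r^2}{2}d\eta+r\,dr\wedge\eta=\tfrac12 d(r^2\eta)$, so closedness is immediate from exactness. The two routes use the same ingredients ($J$-invariance of $d\eta|_D$, the Reeb relations \eqref{Reebcontact}, orthogonality of $D$, $\xi$ and $\partial/\partial r$), but your identification, once verified on the adapted frame as you indicate, is shorter and recovers the standard fact that the cone's K\"ahler form is $\tfrac12 d(r^2\eta)$, whereas the paper's computation avoids having to guess a primitive at the cost of a longer frame-by-frame verification; note that your sign conventions are consistent with the paper's, since contracting $\tfrac12 d(r^2\eta)$ with $\xi$ gives exactly the paper's identity $i_\xi\omega=-r\,dr$.
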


\begin{proof}
We will first show that $g$ (defined through \eqref{geta}) is a riemannian metric. The integrability of the CR-structure implies that, for all vector fields $X$ 
and $Y$ tangent to $D$, we have
\begin{equation}
 \label{1}
 [X+iJX,Y+iJY]=Z+iJZ
\end{equation}
for some $Z$ tangent to $D$. It follows from \eqref{1} that
\begin{equation}
 \label{2}
 \left\{
 \begin{aligned}
 &[JX,Y]+[X,JY]\in\Gamma(D)\cr
 &[X,Y]-[JX,JY]\in\Gamma(D)
 \end{aligned}
 \right .
\end{equation}
Using the fact that $D$ is the kernel of $\eta$ and applying $\eta$ to \eqref{2}, we obtain
\begin{equation}
 \label{3}
 d\eta(X,Y)=d\eta(JX,JY)
\end{equation}
and
\begin{equation}
 \label{4}
 d\eta(JX,Y)+d\eta(X,JY)=0.
\end{equation}
Now, \eqref{3} means that $d\eta$ is a $(1,1)$-form, and \eqref{4} means that $g$ is symmetric. By (2), this is enough to prove that $g$ is a riemannian
metric.
\vspace{5pt}\\
We consider now the riemannian cone $(S\times\mathbb{R}^{>0},\bar g=r^2g+dr\otimes dr)$. We extend $J$ to the cone by setting
\begin{equation}
\label{defJ}
Jr\dfrac{\partial}{\partial r}=\xi\quad\text{ and }\quad J\xi=-r\dfrac{\partial}{\partial r}.
\end{equation}
Notice that with this definition, condition~(1) in the statement of the proposition is still fulfilled. It is straightforward to check that the metric $\bar g$
is $J$-invariant. 
We also set
\begin{equation}
\label{defomega}
\omega:=\bar g(J-,-).
\end{equation}
We will now show that $J$ defines a complex structure on the cone. Indeed, the bundle of $(1,0)$-vectors, say $Q^{1,0}$, satisfies
$$
Q^{1,0}=D^{1,0}\oplus \mathbb C (\xi+ir\dfrac{\partial}{\partial r}).
$$
But, for $X$ tangent to $D$, we have
$$
[X-iJX,\xi+ir\dfrac{\partial}{\partial r}]=-\mathcal L_\xi X+i\mathcal L_\xi(JX)=-\mathcal L_\xi X+iJ\mathcal L_\xi X
$$
because of condition (1). So it belongs to $D^{1,0}$. Since this bundle is involutive, this proves the involutivity of $Q^{1,0}$.
\vspace{5pt}\\
Our last step is to prove that $d\omega$ is zero. Since $\omega$ is the $(1,1)$-form associated to the $J$-invariant riemannian metric $\bar g$, this shows
that $\bar g$ is a k\"ahlerian metric, so, by definition \ref{defsasakien}, we are done.
\vspace{5pt}\\
We first claim that 
\begin{equation}
\label{5}
i_\xi d\omega=0.
\end{equation}
This can be proven as follows. Take $X$ and $Y$ local vector fields tangent to $D$ and commuting with $\xi$. Using the relations
$$
\mathcal L_\xi \eta=\mathcal L_\xi d\eta=\mathcal L_\xi X=\mathcal L_\xi Y=\mathcal L_\xi J=0
$$
we deduce that 
\begin{equation}
\label{6}
(\mathcal L_\xi g)(X,Y)=\mathcal L_\xi (g(X,Y))=\mathcal L_\xi(1/2d\eta(X,JY))=0.
\end{equation}
Similar computations replacing $(X,Y)$ with $(X,\xi)$ and then $(\xi,\xi)$ show that
\begin{equation}
\label{7}
\mathcal L_\xi g=0 \quad \text{and thus}\quad \mathcal L_\xi \bar g=r^2\mathcal L_\xi g+\mathcal L_\xi(dr\otimes dr)=0
\end{equation}
and from \eqref{7} that
\begin{equation}
\label{8}
\mathcal L_\xi\omega=0.
\end{equation}
Moreover,
$$
i_\xi\omega(X)=\bar g(-r\dfrac{\partial}{\partial r},X)=0
$$
and
$$
i_\xi\omega(r\dfrac{\partial}{\partial r})=g(-r\dfrac{\partial}{\partial r},r\dfrac{\partial}{\partial r})=- r^2
$$
yielding
\begin{equation}
\label{9}
i_\xi\omega=-rdr.
\end{equation}
Combining \eqref{8} and \eqref{9} gives \eqref{5}.
\vspace{5pt}\\
We are now in position to show that $d\omega$ is zero and thus to finish with the proof. Because of identity \eqref{5},
it is sufficient to see that $d\omega$ vanishes on $D \oplus\langle r\frac{\partial}{\partial r}\rangle$.  
Choose local coordinates $(t, x_i)$ on $S$ such that $\xi = \frac{\partial}{\partial t}$, and let $Y_i= \frac{\partial}{\partial x_i} + a_i\,\xi$ 
be the local vector fields defined by the condition $\eta(Y_i)=0$. Then $\{Y_i\}$ is a local basis of $D$ and the following 
identities are fulfilled
%
%
%
%
\begin{equation}
\label{10}
[Y_i,r\dfrac{\partial}{\partial r}]=[Y_i,\xi]=0,
\end{equation}
and
\begin{equation}
\label{10bis}
[Y_i,Y_j]= \eta([Y_i, Y_j])\, \xi = - d\eta(Y_i, Y_j)\, \xi.
\end{equation}
This last identity gives 
\begin{equation}
\label{10tris}
\bar g([Y_i,Y_j], Y_k) =0  \quad \text{and} \quad \omega([Y_i,Y_j], Y_k) =0,
\end{equation}
which imply the following relation,
$$
d\omega(Y_i,Y_j,Y_k)=Y_i\cdot \omega (Y_j,Y_k)-Y_j\cdot\omega (Y_i,Y_k)+Y_k\cdot
\omega (Y_i,Y_j).
$$
From that identity and using the definition of $\bar g$ and $\omega$, 
as well as formulas \eqref{3}, 
\eqref{10bis} and \eqref{10tris},
we obtain 
$$
\begin{array}{rcl}
d\omega(Y_i,Y_j,Y_k) & = & Y_i\, \bar g (JY_j, Y_k) -  Y_j\,\bar g  (JY_i, Y_k) + Y_k\, \bar g  (JY_i, Y_j) \\[3mm]
%
 & = & \dfrac{r^2}{2} \big[  Y_i\, d\eta (Y_j, Y_k) -  Y_j\, d\eta (Y_i, Y_k) + Y_k\, d\eta (Y_i, Y_j) \big] \\[3mm]
 & = & \dfrac{r^2}{2} d(d\eta) (Y_i,Y_j,Y_k) =0.
\end{array}
$$
Finally, since $D$, $\langle \xi \rangle$ and $\langle r\dfrac{\partial}{\partial r} \rangle$ are mutually $\bar g$-orthogonal
and using \eqref{10}, \eqref{10bis} and \eqref{3}, we deduce
$$
\begin{array}{rcl}
d\omega(r\dfrac{\partial}{\partial r}, Y_i, Y_j)  & = & 
r\dfrac{\partial}{\partial r}\bar g(J Y_i, Y_j) - Y_i \bar g (J r\dfrac{\partial}{\partial r}, Y_j ) + Y_j \bar g (J r\dfrac{\partial}{\partial r}, Y_i) \\
 &  & \quad - \omega([r\dfrac{\partial}{\partial r}, Y_i], Y_j) +  \omega([r\dfrac{\partial}{\partial r}, Y_j], Y_i) - \omega([Y_i, Y_j], r\dfrac{\partial}{\partial r})\\[2mm]
 & = & r\dfrac{\partial}{\partial r} (r^2 g(JY_i, Y_j)) - \bar g(J[Y_i, Y_j], r\dfrac{\partial}{\partial r})) \\[2mm]
 & = & 2 r^2 \big(\dfrac12 d\eta (JY_i, JY_j)) - \bar g(- J(d\eta(Y_i, Y_j)\, \xi), r\dfrac{\partial}{\partial r}) \\[2mm]
 & = &  r^2 d\eta (Y_i, Y_j) - \bar g (d\eta(Y_i, Y_j) r\dfrac{\partial}{\partial r} , r\dfrac{\partial}{\partial r} ) = 0, 
\end{array}
$$
%
%
so $d\omega$ is zero and we are done.
\end{proof}

\begin{corollary}
\label{Echar}
Let $S$ be a compact smooth manifold. Then, a sasakian structure on $S$ determines uniquely and is uniquely determined by the data of a subbundle $E$ of the complexified tangent bundle and a contact form $\eta$ satisfying
\begin{enumerate}
\item $E+\bar E=T_{\mathbb C}S$.
\item $E\cap \bar E=\mathbb C\xi$.
\item $[E,E]\subset E$.
\item The form $d\eta$ satisfies $\restriction{d\eta}{E}=0$.
\item For all non-zero vector $V$ of $E\cap D_{\mathbb C}$, one has
$d\eta(V, i\bar V)+d\eta(\bar V,-iV)>0$.
\end{enumerate}
where $\xi$ is defined through {\rm (\ref{Reebcontact})} and $D$ is the kernel of $\eta$. 
\end{corollary}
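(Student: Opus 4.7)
The plan is to prove the corollary in two directions: a sasakian structure produces a pair $(E,\eta)$ satisfying (1)--(5), and conversely such a pair reconstructs the sasakian structure uniquely.

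For the forward direction, start from a sasakian structure $(S,g,\eta,\xi,\Phi)$, keep $\eta$, and define $E:=D^{0,1}\oplus\mathbb{C}\xi$ as in \eqref{transverse}. Conditions (1) and (2) are immediate from the decompositions $T_\mathbb{C}S=D_\mathbb{C}\oplus\mathbb{C}\xi$ and $D_\mathbb{C}=D^{1,0}\oplus D^{0,1}$. For (3), integrability of the induced CR structure on $D$ gives $[D^{0,1},D^{0,1}]\subset D^{0,1}\subset E$, while the sasakian identity $\mathcal{L}_\xi J\equiv 0$ gives $[\xi,D^{0,1}]\subset D^{0,1}\subset E$. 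For (4), the identity $i_\xi d\eta\equiv 0$ of \eqref{Reebcontact} takes care of the mixed terms involving $\xi$, while $d\eta$ being of type $(1,1)$ on $D$ (it is the transverse K\"ahler form of $\mathcal F$) takes care of the pure $(0,1)$-$(0,1)$ terms inside $D^{0,1}$. Finally (5) is exactly the strict positivity of the Levi form of the strongly pseudo-convex CR structure $(D,J)$.

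For the converse, assume $(E,\eta)$ satisfies (1)--(5). Define $\xi$ by \eqref{Reebcontact} and set $D:=\ker\eta$. Since $\eta(\xi)=1$ the line $\mathbb{C}\xi$ is transverse to $D_\mathbb{C}$, so from (1) and (2) a dimension count shows that the subbundles $D^{0,1}:=E\cap D_\mathbb{C}$ and $D^{1,0}:=\bar E\cap D_\mathbb{C}$ are complex conjugate and satisfy $D_\mathbb{C}=D^{0,1}\oplus D^{1,0}$. This defines an almost-CR structure $J$ on $D$, extended to all of $TS$ by \eqref{Phi}. One then checks the two hypotheses of Proposition \ref{charsasakien}. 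Integrability: for $V,W\in D^{0,1}$, condition (3) gives $[V,W]\in E$; the Cartan formula $d\eta(V,W)=-\eta([V,W])$ together with $d\eta|_E=0$ from (4) yields $\eta([V,W])=0$, hence $[V,W]\in E\cap D_\mathbb{C}=D^{0,1}$. The same Cartan argument, applied to $V\in D^{0,1}$ and $\xi$ and using $i_\xi d\eta=0$, shows $[\xi,V]\in D^{0,1}$, that is $\mathcal{L}_\xi J=0$. Finally, for a real nonzero $V=V^{0,1}+\overline{V^{0,1}}\in D$, antisymmetry and $d\eta|_E=0$ reduce $d\eta(V,JV)$ to $d\eta(V^{0,1},i\overline{V^{0,1}})+d\eta(\overline{V^{0,1}},-iV^{0,1})$, which is positive by (5). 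Proposition \ref{charsasakien} then produces the sought sasakian structure.

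Mutual inverseness of the two constructions is tautological: if one starts from a sasakian structure then $E\cap D_\mathbb{C}=D^{0,1}$ because $\xi\notin D_\mathbb{C}$, and starting from $(E,\eta)$ the vector field $\xi$ is uniquely determined by \eqref{Reebcontact}. The main obstacle, though modest, is the linear-algebraic step in the converse direction that identifies $D^{0,1}$ and $D^{1,0}$ as transverse subbundles of $D_\mathbb{C}$ of the correct rank, and the realization that in this setting the combination of involutivity of $E$ and $d\eta|_E=0$ plays the role of the usual $(1,1)$-type condition for $d\eta$.
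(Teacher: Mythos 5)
Your proof is correct and, like the paper's, reduces the converse direction to Proposition \ref{charsasakien}; but the way you verify its hypotheses is genuinely different. The paper first invokes Nirenberg's complex Frobenius theorem: conditions (1)--(3) make the foliation generated by $\xi$ transversely holomorphic, conditions (4)--(5) are then reread as saying that $d\eta$ is a basic $(1,1)$-form which is positive on $D$, CR-integrability of $(D,J)$ follows from the $(1,1)$-type of $d\eta$, and $\mathcal L_\xi J=0$ is obtained from holonomy invariance of the transverse holomorphic structure (the discrepancy $[\xi,JV]-J[\xi,V]$ lies both in $\mathbb C\xi$ and in $D$, hence vanishes). You bypass Nirenberg and the transversely holomorphic reformulation entirely: involutivity of $D^{0,1}=E\cap D_{\mathbb C}$ comes from (3) plus the Cartan formula and $d\eta|_E=0$, and $[\xi,D^{0,1}]\subset D^{0,1}$ (hence $\mathcal L_\xi J=0$) comes from the Cartan formula with $i_\xi d\eta=0$ together with condition (3) --- note that here you implicitly use $\xi\in E$, which is condition (2), to get $[\xi,V]\in E$ in the first place; it would be worth saying so explicitly. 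Your route is more elementary and self-contained, while the paper's route is natural in its framework, since the transversely holomorphic foliation $(S,E)$ produced by Nirenberg's theorem is precisely the object deformed in the rest of the section, so exhibiting it is useful beyond this corollary. Two cosmetic remarks: in the positivity computation only antisymmetry of $d\eta$ is needed (the appeal to $d\eta|_E=0$ is superfluous), and the forward direction, which you spell out via \eqref{Reebcontact}, the $(1,1)$-type of the transverse K\"ahler form and the Levi form, is simply declared ``easily verified'' in the paper.
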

\begin{remark}
In the proof of corollary \ref{Echar}, the sasakian structure will be made explicit from $E$ and $\eta$. 
\end{remark}

\begin{proof}
Let $(g,\eta,\xi,\Phi)$ be a sasakian structure on $S$. Define $E$ through (\ref{transverse}). Then the conditions above are easily verified.
\vspace{5pt}\\
Conversely, let $(E,\eta)$ be as above. By a result of \cite{Nirenberg}, the first three conditions imply that the foliation $\mathcal F$ induced by $\xi$ is transversely holomorphic. Set 
$$
D^{0,1}:=D_{\mathbb C}\cap E.
$$
We have
$$
T_{\mathbb C}S=D_{\mathbb C}\oplus \mathbb C\xi\Longrightarrow E=D^{0,1}\oplus\mathbb C\xi.
$$
The fourth and fifth conditions can now be rewritten as: $d\eta$ is a basic $(1,1)$-form and $d\eta(V, JV)>0$ for all non zero V tangent to $D$. Moreover, for $V$ and $W$ tangent to $D^{0,1}$,
$$
\eta([V,W])=d\eta (V,W)=0
$$
since $d\eta$ is $(1,1)$. So $[V,W]$ belongs to $D_{\mathbb C}$, hence to $D^{0,1}$, which is thus involutive, proving the integrability of $(D,J)$.
\vspace{5pt}\\
Notice that $V$ tangent to $D$ implies that $[\xi, D]$ is tangent to $D$, since
$$
\eta([\xi, V])=d\eta(\xi, V)=0.
$$
Finally, the transverse holomorphic structure is by definition invariant by holonomy, hence we have 
\begin{equation}
\label{Jinvariance}
[\xi, JV]-J[\xi, V]\in\mathbb C\xi.
\end{equation}
Since \eqref{Jinvariance} is tangent to $D$ by the previous remark, it must be zero. This can be rephrased as: $\mathcal L_\xi J$ is zero.
Applying proposition \ref{charsasakien} yields the result.
\end{proof}

We are now in position to give a good encoding of the sasakian deformations with the transverse holomorphic structure fixed.

\begin{corollary}
\label{alpha}
Let $(S,g,\xi,\eta, \Phi)$ be a sasakian manifold. Define $E$ as in {\rm (\ref{E})}. Then, there exists a neighborhood $U_0$ of $0$ in the space of  real $1$-forms on $S$ such that, for all $\alpha\in U_0$, the following conditions are equivalent
\begin{enumerate}
\item The triple $(S,E, \eta+\alpha)$ is a sasakian manifold.
\item We have $\restriction{d\alpha}{E}=0$.
\end{enumerate}
\end{corollary}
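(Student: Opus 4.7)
My plan is to reduce the equivalence to Corollary \ref{Echar}, which characterizes sasakian structures as pairs $(E, \eta)$ satisfying five conditions. Since $E$ is held fixed throughout and only $\eta$ is perturbed, conditions (1) and (3) (which depend only on $E$) are preserved automatically. The positivity condition (5) and the requirement that $\eta + \alpha$ be a contact form are both open conditions on $\alpha$, so they hold on some sufficiently small neighborhood $U_0$ of $0$ in the space of real 1-forms. I also shrink $U_0$ so that $1 + \alpha(\xi) > 0$ pointwise, which will be needed to normalize the new Reeb field.

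The forward direction is then almost immediate: if $(S, E, \eta + \alpha)$ is sasakian, Corollary \ref{Echar} gives condition (4), namely $d(\eta+\alpha)|_E = 0$, and since $d\eta|_E = 0$ by the sasakian hypothesis on the base structure, this forces $d\alpha|_E = 0$.

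For the converse, assume $d\alpha|_E = 0$ and $\alpha \in U_0$. Conditions (1), (3), (5) and the contact condition have been handled. The condition (4) for $(E, \eta + \alpha)$ is $d(\eta + \alpha)|_E = 0$, which reduces to $d\alpha|_E = 0$ and so holds by assumption. What remains, and is the heart of the argument, is to verify condition (2), which requires control of the new Reeb vector field $\xi'$ defined by $i_{\xi'}(\eta+\alpha) = 1$ and $i_{\xi'} d(\eta+\alpha) = 0$. My key observation is that $d\alpha|_E = 0$ already forces $i_\xi d\alpha \equiv 0$: indeed $\xi \in E$, so $d\alpha(\xi, V) = 0$ for every $V \in E$; the reality of $d\alpha$ propagates this vanishing to $\bar E$; and then condition (1) for $E$ extends it to $T_\mathbb{C}S = E + \bar E$. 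Combined with $i_\xi d\eta = 0$, this lets me set $\xi' := \xi / (1 + \alpha(\xi))$ and directly verify that it satisfies the two defining identities of the new Reeb field. Since $\xi' \in \mathbb{C}\xi = E \cap \bar E$, condition (2) is fulfilled as $E \cap \bar E = \mathbb{C}\xi'$, and applying Corollary \ref{Echar} concludes the argument.

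The main subtlety I expect to confront is precisely this identification of $\xi'$: a priori, perturbing $\eta$ could pull the Reeb field out of $\mathbb{C}\xi$, which would break condition (2). The computation above shows that the hypothesis $d\alpha|_E = 0$ is exactly what prevents this, reflecting the geometric fact (emphasized in Section \ref{background}) that fixing the transverse holomorphic structure $E$ constrains the Reeb field to change only by a scalar factor.
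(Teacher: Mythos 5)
Your proposal is correct and follows essentially the same route as the paper: shrink $U_0$ using openness of the contact and positivity conditions, get the forward implication directly from condition (4) of Corollary \ref{Echar}, and for the converse show that $d\alpha|_E=0$ forces $i_\xi d\alpha=0$ (via reality of $d\alpha$ and $E+\bar E=T_{\mathbb C}S$), so the new Reeb field is a rescaling of $\xi$ and Corollary \ref{Echar} applies. Your explicit normalization $\xi'=\xi/(1+\alpha(\xi))$ just makes precise what the paper states as ``$\xi'$ is a multiple of $\xi$.''
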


\begin{remark}
Of course, to make Corollary \ref{alpha} precise, one has to fix the same regularity on the space of $1$-forms and basic $1$-forms: $C^\infty$ or $W^l$, etc...
\end{remark}

\begin{proof}
Choose $U_0$ so that, for all $\alpha\in U_0$, we have that $\eta+\alpha$ is a contact form and the fifth condition of corollary \ref{Echar} is fulfilled. This is possible since both properties are open.
\vspace{5pt}\\
Assume that $(S,E,\eta')$ is sasakian, with $\eta'-\eta$ in $U_0$. Define $\alpha:=\eta'-\eta$. Let $\xi'$ be the Reeb vector field associated to $\eta'$ through \eqref{Reebcontact}. Since $E$ is kept fixed, it follows from Corollary \ref{Echar} that $\restriction{d\alpha}{E}$ is zero.
\vspace{5pt}\\
Conversely, let $\alpha$ belong to $U_0$ and satisfy {\it (2)}. Then, for 
$$
V=\lambda\xi\oplus W\oplus\bar W
$$ 
a real vector field (here with $W$ tangent to $D^{1,0}$), we have 
$$
\label{p}
i_\xi d(\eta+\alpha)(V)=i_\xi d\alpha(V)=d\alpha(\xi,W)+d\alpha(\xi, \bar W)=0
$$
because of {\it (2)} and because $\bar E\cap E=\mathbb C\xi$. Hence, the Reeb vector field $\xi'$ associated to $\eta'$
is a multiple of $\xi$. Applying corollary \ref{Echar} gives the result.
\end{proof}

We are now in position to prove a existence of the Kuranishi type space for the deformations of $\eta$ with the transverse structure of $\mathcal F$ fixed.

\subsection{Deformations of the contact form of a sasakian structure}
\label{Kuretasasakien}

In this subsection, we construct a Kuranishi type space for the deformations of the contact form of a sasakian manifold.
Let $(S,E,\eta)$ be a sasakian manifold. Fix $l>1+\dim S/2$ and consider only $1$-forms of class $W^l$.  We let $T^*S$ be the cotangent bundle
of $S$. The notation $\Gamma^l(-)$ stands for the vector space of $W^l$ sections of the corresponding bundle. Let 
\begin{equation}
\label{Icontact}
\mathcal E=\mathcal I=\{\alpha\in \Gamma^l(T^*S)\mid\restriction{d\alpha}{E}=0\}.
\end{equation}

Let $G=G^l$ be the connected component of the identity of the topological group of diffeomorphisms $f$ of $S$ of class $W^{l+1}$ such that
$$
f^*E\equiv E
$$
and let $G^\infty$ be the group of elements of class $C^\infty$.
\begin{lemma}
\label{Banach}
There exists a Fr\'echet chart {\rm \eqref{Gcarte}} of $G^\infty$ at $e$ which extends as a Hilbert chart of $G^l$.
\end{lemma}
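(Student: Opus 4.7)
The plan is to realize $G^l$ as a Hilbert submanifold of the full diffeomorphism group $\mathrm{Diff}^{l+1}(S)$ near the identity, via the implicit function theorem, and then to observe that the resulting chart restricts to a Fréchet chart of $G^\infty$ by elliptic regularity.

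The first step is to identify the candidate tangent space. For $X \in \Gamma(TS)$ and $V \in \Gamma(E)$, the assignment $V \mapsto [X^\mathbb{C}, V] \bmod E$ is $C^\infty$-linear in $V$ (since $[X, fV] - f[X, V] = (Xf)V$ again lies in $\Gamma(E)$), so it defines a first-order linear differential operator
$$P \colon \Gamma^{l+1}(TS) \longrightarrow \Gamma^{l}\bigl(\mathrm{Hom}_\mathbb{C}(E, T_\mathbb{C} S / E)\bigr).$$
I would set $T^{l+1} := \ker P$, a closed Hilbert subspace of $\Gamma^{l+1}(TS)$ corresponding, at the smooth level, to vector fields whose flow preserves $E$.

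The second step is to cut $G^l$ out of $\mathrm{Diff}^{l+1}(S)$. Choose a smooth $\xi$-invariant Riemannian metric $h$ on $S$ (the sasakian metric $g$ itself works). The pointwise exponential $\mathrm{Exp}_h \colon X \mapsto (p \mapsto \exp^h_p(X(p)))$ is a Hilbert chart of $\mathrm{Diff}^{l+1}(S)$ at $e$ which restricts on smooth fields to a Fréchet chart of $\mathrm{Diff}^\infty(S)$. The subset $G^l$ is locally the zero set of a smooth $\Psi$ valued in $\Gamma^l(\mathrm{Hom}_\mathbb{C}(E, T_\mathbb{C} S / E))$, built from the natural comparison of $f_*E$ with $E$, and the differential of $\Psi \circ \mathrm{Exp}_h$ at $0$ is precisely $P$. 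Granting the hypothesis that $P$ has closed image and admits a closed complement in the target, I would pick a closed complement $C^{l+1}$ of $T^{l+1}$ in $\Gamma^{l+1}(TS)$ and solve $\Psi \circ \mathrm{Exp}_h(X + \sigma(X)) = 0$ for a smooth $\sigma \colon T^{l+1} \to C^{l+1}$ with $\sigma(0) = 0$, $d\sigma_0 = 0$. Then
$$X \in T^{l+1} \longmapsto \mathrm{Exp}_h(X + \sigma(X)) \in G^l$$
is the desired Hilbert chart. Standard elliptic regularity applied to the defining equation of $\sigma$ shows that $\sigma$ preserves smoothness, so restricting to smooth fields gives a Fréchet chart of $G^\infty$.

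The main obstacle is the analytic hypothesis that $P$ has closed image with a closed complement. This is where the sasakian data enters essentially: the symbol of $P$ along directions transverse to the Reeb foliation $\mathcal F$ is essentially the Dolbeault symbol of the transverse holomorphic structure carried by $\mathcal F$, which is injective on nonzero transverse covectors, so $P$ is transversely elliptic. Combined with the fact that $\mathcal F$ is Riemannian (the Reeb flow acts by isometries), transverse Hodge-theoretic arguments of the type developed in \cite{AzizMarcel} yield both the closedness of $\mathrm{Im}(P)$ and a closed complement, for instance via the kernel of a formal adjoint.
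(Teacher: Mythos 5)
Your route is genuinely different from the paper's, and it breaks down at its central step, the implicit function theorem. The linearization $P$ of $\Psi\circ\mathrm{Exp}_h$ at $0$ is \emph{not} surjective onto $\Gamma^l(\mathrm{Hom}_{\mathbb C}(E,T_{\mathbb C}S/E))$: since the Reeb flow preserves $E$ (and $\mathbb C\xi\subset E$), one checks that $P(h\xi)=0$ for every function $h$, so $P$ factors through $\chi\mapsto\chi^{1,0}$ and is there the operator $(\bar\partial+\partial_t)$ of Lemma \ref{differentielle} and \eqref{D}; this is an \emph{overdetermined} elliptic operator, and its cokernel is precisely where the obstructions to deforming the transversely holomorphic foliation live. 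Consequently the equation $\Psi(\mathrm{Exp}_h(X+\sigma(X)))=0$ cannot be solved for $\sigma$ by the implicit function theorem: at best you can solve the equation projected onto $\mathrm{Im}\,P$, whose zero set is a Hilbert manifold containing $G^l$ near $e$ but with no reason to coincide with it. Showing that the complementary component of $\Psi$ vanishes automatically on that manifold is exactly the integrability statement that fails in general (it is why Kuranishi spaces are singular), and you give no argument for it; without it you have not produced a chart of $G^l$. Repairing this essentially forces one to exhibit enough explicit elements of $G^l$, which is what the paper's construction does directly.

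Two secondary points. First, the analytic input is misattributed: closedness of $\mathrm{Im}\,P$ does not require transverse Hodge theory for Riemannian foliations (which concerns basic sections, not all of $\Gamma^l$); since $P$ kills the $\xi$-component identically, the induced operator $(\bar\partial+\partial_t)$ on $\Gamma^{l+1}(D^{1,0})$ has injective symbol (cf. the proof of Lemma \ref{elliptic}), so ordinary elliptic theory already gives a closed image and a finite-dimensional smooth kernel -- and this computation also shows $\ker P=\mathcal X_0\oplus\mathcal X_N$ as in \eqref{vectTH}. Likewise, your final appeal to ``elliptic regularity'' for $\sigma$ is not automatic, since the operator you linearize is not elliptic in the $\xi$-direction. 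Second, by contrast, the paper proves the lemma with no implicit function theorem at all: it parametrizes the leaf-preserving subgroup $G_0^\infty$ by the riemannian exponential applied to $\mathcal X_0=\{h\xi\}$ (flows along the Reeb geodesics), uses that the transverse part $\mathcal X_N$ of basic holomorphic fields is a finite-dimensional Lie algebra, and takes the chart $h\xi+\chi_N\mapsto\exp(\chi_N)\circ\phi_0(h\xi)$, whose bijectivity near $e$ comes from the group structure; the Sobolev extension is then immediate. Your proposal, as it stands, does not reach this conclusion.
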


\begin{proof}
First, consider the subgroup $G^\infty_0$ of elements of $G^\infty$ that preserve each leaf. Using a riemannian metric on $S$ (for example its sasakian metric), we obtain a riemannian exponential map, say $\phi_0$, modelling $G^\infty_0$ at identity on the vector space of smooth vector fields tangent to the foliation (here it is just the space $\mathcal X_0$ of multiples of $\xi$). Let $\mathcal X$ be the set of vector fields generating isomorphisms of the transversal holomorphic foliation $\mathcal F$. Then it decomposes naturally as
\begin{equation}\label{vectTH}
\mathcal X=\mathcal X_0\oplus \mathcal X_N
\end{equation}
where 
$\mathcal X_N$ is the space of holomorphic basic vector fields orthogonal to $\xi$.  Letting $\exp$ denote the exponential of Lie groups, we see that we can take the chart
$$
\chi=h\xi+\chi_N\in\mathcal X_0\times \mathcal X_N\longmapsto \exp(\chi_N)\circ\phi_0(h\xi)\in G^\infty
$$
as a Fr\'echet chart \eqref{Gcarte}. Passing to the Sobolev completions, it extends as a Hilbert chart (\ref{Gcarte}) of $G^l$.
\end{proof}

We use the notations of section \ref{GS}. In particular, $V$ (respectively $U$) denotes a neighborhood of $0$ in $T=\mathcal X$ (respectively $\mathcal E$). Action (\ref{action}) is given by
$$
(\chi,\alpha)\in V\times U\longmapsto \alpha\cdot\phi(\chi)=\phi(\chi)^*(\eta+\alpha)-\eta\in\mathcal E.
$$
Looking at the differential at $(0,\eta)$, 

\begin{lemma}
\label{Llemma}
We have 
$$
L(\chi,\alpha)=\alpha+P\chi=\alpha+\mathcal L_\chi \eta.
$$
\end{lemma}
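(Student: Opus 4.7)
The plan is to compute the differential of the action map $(\chi, \alpha) \mapsto \phi(\chi)^*(\eta + \alpha) - \eta$ at the base point $(0, 0)$ by taking partial derivatives in each argument. Since $\phi(0) = e$, the value of the map at $(0,0)$ is $0$, so we are genuinely linearizing at the origin.

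For the partial derivative in $\alpha$ at $(0,0)$: the map $\alpha \mapsto \phi(0)^*(\eta + \alpha) - \eta = \alpha$ is the identity, so this partial contributes the $\alpha$ term in $L(\chi, \alpha)$.

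For the partial derivative in $\chi$ at $(0,0)$, I would use the standard identity
$$
\frac{d}{dt}\bigg|_{t=0} f_t^* \omega = \mathcal{L}_X \omega,
$$
valid whenever $t \mapsto f_t$ is a smooth path of diffeomorphisms with $f_0 = \mathrm{id}$ and infinitesimal generator $X$. Applying this to $f_t = \phi(t\chi)$ and $\omega = \eta$, the partial in $\chi$ is $\mathcal{L}_X \eta$, where $X = \frac{d}{dt}|_{t=0}\phi(t\chi) \in T_e G$. The remaining point is to identify $X$ with $\chi$ itself, which amounts to showing that the differential $d\phi_0 : T \to T_e G$ is the natural inclusion of the subspace $\mathcal{X} = \mathcal{X}_0 \oplus \mathcal{X}_N$ of vector fields generating $E$-preserving diffeomorphisms into the full space of vector fields on $S$. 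This is immediate from the explicit construction of $\phi$ in the proof of Lemma \ref{Banach}: both $\phi_0$ (a Riemannian exponential) and $\exp$ (a Lie-group exponential) have differential equal to the identity at the origin of their domains, and the differential at $(e,e)$ of the group composition $(A, B) \mapsto A \circ B$ is the sum $(X_1, X_2) \mapsto X_1 + X_2$ of tangent vectors at $e$, so that $d\phi_0(h\xi + \chi_N) = h\xi + \chi_N$.

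Combining the two partials, $L(\chi, \alpha) = \alpha + \mathcal{L}_\chi \eta$, which is the claim with $P = \mathcal{L}_{(\cdot)}\eta$. I do not expect a genuine obstacle here: the only thing one must be careful about is the identification in the preceding paragraph, and the check that $\mathcal{L}_\chi \eta$ actually lies in $T_0 \mathcal{E}$, i.e.\ that $d(\mathcal{L}_\chi \eta)|_E = 0$. This follows by differentiating the identity $d(\phi(t\chi)^*\eta)|_E = \phi(t\chi)^*(d\eta|_E) = 0$ at $t = 0$, valid because $\phi(t\chi)$ preserves $E$ by construction of $G$.
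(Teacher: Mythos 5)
Your proof is correct and follows essentially the same route as the paper: the paper also differentiates the action along $s\mapsto\phi(s\chi)$, observing that this path agrees to first order with the flow of $\chi$ (i.e.\ that the chart $\phi$ has identity differential at $0$), so the $\chi$-derivative of the pullback is $\mathcal L_\chi\eta$, while the $\alpha$-direction contributes the identity. Your explicit verification that $d\phi_0=\operatorname{Id}$ via the construction in Lemma \ref{Banach}, and the check that $\mathcal L_\chi\eta$ satisfies $\restriction{d(\mathcal L_\chi\eta)}{E}=0$, are harmless additions of rigor rather than a different argument.
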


\begin{proof}
By definition, we have
\begin{equation}
\label{calculL}
L(\chi,\alpha)=\restriction{\dfrac{d}{ds}}{s=0}\left (\phi(s\chi)^*(\eta+s\alpha)-\eta\right).
\end{equation}
Observe now that, in a local chart, for $s$ sufficiently small, we have
\begin{equation}
\label{DL}
\phi(s\chi)=Id+s\chi+\epsilon(s)
\end{equation}
Now, \eqref{DL} is exactly the Taylor development in $s$ of the flow of $\chi$ at $s$. Hence, {\it up to order $1$}, $\phi(s\chi)$ coincide with the flow $\phi_s^\chi$ of $\chi$ at time $s$. As a consequence, we immediately deduce from \eqref{calculL} that
$$
L(\chi,\alpha)=\alpha+ \restriction{\dfrac{d}{ds}}{s=0}\left ((\phi_s^\chi)^*\eta\right )
$$
and thus
$$
L(\chi,\alpha)=\alpha+P\chi=\alpha+\mathcal L_\chi \eta.
$$
\end{proof}
Finally,

\begin{lemma}
The image of $P$ is closed in $\mathcal E$.
\end{lemma}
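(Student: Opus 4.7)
The plan is to decompose via $T = \mathcal{X}_0 \oplus \mathcal{X}_N$ from the proof of Lemma~\ref{Banach}, where $\mathcal{X}_0 = \{h\xi : h \in W^{l+1}(S,\mathbb{R})\}$ and $\mathcal{X}_N$ is the space of basic, transversely holomorphic vector fields orthogonal to $\xi$. Cartan's magic formula $\mathcal{L}_\chi \eta = d(i_\chi\eta) + i_\chi d\eta$, combined with the Reeb identities $i_\xi d\eta=0$, $\eta(\xi)=1$, and $\eta(\chi_N)=0$ (since $\chi_N$ is tangent to $D$), yields the explicit decomposition
\begin{equation*}
P(h\xi+\chi_N) \;=\; dh \;+\; i_{\chi_N}\,d\eta.
\end{equation*}
It then suffices to prove closedness of the image on each factor and conclude by a soft Banach-space argument.

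The first (and only substantial) point is that $\mathcal{X}_N$ is \emph{finite-dimensional}. A basic transversely holomorphic vector field corresponds to a holomorphic section of the transverse holomorphic tangent sheaf for the transversely K\"ahler foliation $\mathcal{F}$; such sections are annihilated by a transversely elliptic operator on the compact manifold $S$, and the basic Hodge theory of El Kacimi-Alaoui forces its kernel to be finite-dimensional. By elliptic regularity this space coincides with its $W^{l+1}$-completion, so $\dim \mathcal{X}_N < \infty$. Consequently $P(\mathcal{X}_N) \subset \mathcal{E}$ is finite-dimensional, hence closed.

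For the other summand, $P(\mathcal{X}_0) = d\bigl(W^{l+1}(S,\mathbb{R})\bigr)$ is the space of exact $W^l$ one-forms on $S$. The Hodge decomposition associated to the elliptic de Rham Laplacian $\Delta = dd^*+d^*d$ on the compact riemannian manifold $S$ gives
\begin{equation*}
W^l\Omega^1(S) \;=\; d\bigl(W^{l+1}\Omega^0(S)\bigr) \,\oplus\, d^*\bigl(W^{l+1}\Omega^2(S)\bigr) \,\oplus\, \mathcal{H}^1,
\end{equation*}
with each summand closed; in particular $P(\mathcal{X}_0)$ is closed in $W^l\Omega^1(S)$, and therefore also in the closed subspace $\mathcal{E}$.

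Finally, $\mathrm{Im}\,P = P(\mathcal{X}_0) + P(\mathcal{X}_N)$ is the algebraic sum of a closed subspace and a finite-dimensional subspace of the Banach space $\mathcal{E}$; such a sum is automatically closed (quotient by $P(\mathcal{X}_0)$ and invoke the fact that finite-dimensional subspaces of Banach spaces are closed). The main obstacle in this plan is precisely the finite-dimensionality of $\mathcal{X}_N$, which is where the compactness of $S$ and the transverse K\"ahler structure are genuinely used; everything else is Cartan calculus together with standard Hodge theory.
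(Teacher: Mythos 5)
Your proposal is correct and follows essentially the same route as the paper: the decomposition $\chi = h\xi + \chi_N$, the computation $P\chi = dh + i_{\chi_N}d\eta$ via Cartan's formula and the Reeb identities, and the conclusion that a closed subspace (the exact $W^l$ one-forms) plus the image of the finite-dimensional space $\mathcal X_N$ is closed. You merely spell out two points the paper leaves implicit — the finite-dimensionality of $\mathcal X_N$ via transverse elliptic/basic Hodge theory and the closedness of $d(W^{l+1}\Omega^0)$ via the Hodge decomposition — which is fine.
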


\begin{proof}
Let $\chi\in V$. Set
$$
\chi=h\xi+\chi_N.
$$
We have
$$
\mathcal L_\chi \eta=d(i_{h\xi}\eta)+hi_\xi d\eta+d(i_{\chi_N}\eta)+i_{\chi_N}d\eta.
$$
Now, the third term is zero because the kernel of $\eta$ is generated by the vector fields $\chi_N$. Using \eqref{Reebcontact}, 
it follows that
$$
\mathcal L_\chi \eta=dh+i_{\chi_N}d\eta.
$$
But this formula shows that the image of $P$ is the sum of the image of the de Rham differential applied to the set of functions and of the image of a finite-dimensional vector space under a bounded linear operator. Hence it is the sum of a closed subspace and of a finite-dimensional one. So it is closed.
\end{proof}
Hence, hypotheses (H1), (H2), (H3), (H4) and (H5) are satisfied. Observe that the isotropy group of $\eta$ is the automorphism group of the sasakian manifold $(S,E,\eta)$. Hence it is finite-dimensional and we can use the time $1$ flow as chart $\psi$ fulfilling (H4). Define
\begin{equation}
\label{Kspace}
K_\eta^l:=\{\alpha\in U_0\mid P^*\alpha=\restriction{d\alpha}{E}=0\}
\end{equation}
we now have, using Theorem \ref{main},

\begin{theorem}
The space $K_\eta^l$ defined in {\rm \eqref{Kspace}} is an open neighborhood of $0$ in a infinite-dimensional Hilbert space and is a Kuranishi type space for $\eta$.
\end{theorem}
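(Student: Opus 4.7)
The strategy is to reduce the statement to Theorem~\ref{main}(2): the Kuranishi type property follows once hypotheses (H1)--(H5) have been verified, the equality with the concrete set \eqref{Kspace} is bookkeeping via Remark~\ref{star}, and the genuine content is infinite-dimensionality of the ambient Hilbert space.

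\smallskip

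For the first step, the preceding discussion already checks all five hypotheses: $\mathcal{E}$ is a closed linear subspace of the Hilbert space $\Gamma^l(T^*S)$, yielding (H1) and the second half of (H3); Lemma~\ref{Llemma} gives (H2) with $P\chi=\mathcal{L}_\chi\eta$; the closedness lemma immediately before the theorem gives the first half of (H3); and the fact that $G_\eta$, being the automorphism group of the sasakian structure $(S,E,\eta)$, is a finite-dimensional Lie group of $C^\infty$ diffeomorphisms provides (H4) (via its riemannian exponential) together with (H5) (composition of a smooth diffeomorphism with a $W^{l+1}$ one is $C^1$ in the $W^{l+1}$ topology, cf.\ the proof of Proposition~\ref{Pelliptic}). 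Theorem~\ref{main}(2) then produces a Kuranishi type space.

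\smallskip

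For the second step, I would unravel the definitions to identify this abstract Kuranishi type space with $K_\eta^l$. Because $\mathcal{E}$ is a Hilbert space, Remark~\ref{star} allows the choice $F^\perp=\mathrm{Ker}(P^*)$; because $\mathcal{E}$ is also linear, the exponential chart at the base point $0\in\mathcal{E}$ may be taken to be the identity. One then takes $\tilde K=\{\alpha\in U\cap\mathcal{E}:P^*\alpha=0\}$ in Proposition~\ref{lcexistence}, and since $\mathcal{I}=\mathcal{E}$ the resulting local section is $K=\tilde K\cap\mathcal{I}=\tilde K$, which matches \eqref{Kspace} word for word. This description simultaneously exhibits $K_\eta^l$ as an open neighborhood of $0$ in the closed linear subspace $\mathrm{Ker}(P^*)\cap\mathcal{E}$ of $\Gamma^l(T^*S)$, itself a Hilbert space.

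\smallskip

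The remaining point---and the only genuine obstacle---is infinite-dimensionality of that Hilbert space. The plan is to exploit the transverse K\"ahler geometry. Applying the exterior derivative $d$ to the formula $P(h\xi+\chi_N)=dh+i_{\chi_N}d\eta$ from Lemma~\ref{Llemma} gives $d(\mathrm{Im}\,P)=\mathcal{L}_{\mathcal{X}_N}d\eta$, which is finite-dimensional because $\mathcal{X}_N$ is. On the other hand, for any real basic function $f$ of class $W^{l+1}$ the basic real $1$-form $d^c_B f:=i(\bar\partial_B-\partial_B)f$ satisfies $d(d^c_B f)=2i\,\partial_B\bar\partial_B f$, a basic $(1,1)$-form which automatically vanishes on $E\wedge E=(D^{0,1}\oplus\mathbb{C}\xi)^{\wedge 2}$; hence $d^c_B f\in\mathcal{E}$. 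Since $f\mapsto \partial_B\bar\partial_B f$ has infinite-dimensional image, the classes $[d^c_B f]$ span an infinite-dimensional subspace of $\mathcal{E}/\mathrm{Im}\,P$. As closedness of $\mathrm{Im}\,P$ provides the Hilbert decomposition $\mathcal{E}=\mathrm{Im}\,P\oplus(\mathrm{Ker}(P^*)\cap\mathcal{E})$, the latter summand is infinite-dimensional, completing the proof.
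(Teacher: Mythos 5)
Your proposal is correct and follows the paper's route in outline: the Kuranishi type property comes from Theorem~\ref{main}(2) after checking (H1)--(H5) (which the paper does in the run-up to the theorem rather than inside its proof), the identification with \eqref{Kspace} is Remark~\ref{star} plus the fact that $\mathcal I=\mathcal E$ is linear, and infinite-dimensionality is extracted from the transverse K\"ahler geometry via basic functions. Where you genuinely diverge is in how the last point is organized. The paper's proof asserts that $K^l_\eta$ ``contains all the basic $1$-forms whose differential is $(1,1)$'' and then invokes the fact that the Reeb flow has no dense orbit; taken literally this skips the constraint $P^*\alpha=0$, which a form like $d^c_Bf$ has no reason to satisfy. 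Your argument repairs this: you only show that the forms $d^c_Bf$ are linearly independent in $\mathcal E/\mathrm{Im}\,P$ (by applying $d$ and noting that $d(\mathrm{Im}\,P)=\{\mathcal L_{\chi_N}d\eta \mid \chi_N\in\mathcal X_N\}$ is finite-dimensional, which neatly kills the exact forms $dh$ with $h$ arbitrary), and then transfer infinite-dimensionality to $\mathrm{Ker}(P^*)\cap\mathcal E$ through the orthogonal splitting $\mathcal E=\mathrm{Im}\,P\oplus(\mathrm{Ker}(P^*)\cap\mathcal E)$; this buys a cleaner and more robust argument at the cost of one extra (standard) splitting step. The one input you state without proof is that $f\mapsto\partial_B\bar\partial_Bf$ has infinite-dimensional image on basic functions: this needs (a) the space of basic functions to be infinite-dimensional, which is exactly what the paper secures by citing that the Reeb flow of a sasakian manifold has no dense orbit (its closure in the isometry group is a proper torus, so invariant functions abound), and (b) that transversely pluriharmonic basic functions on a compact manifold are constant (integration against the transverse K\"ahler form). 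Both are standard and easily supplied, so this is an omission to be filled rather than a flaw, but it should be said, since it is the only non-formal content of the infinite-dimensionality claim.
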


\begin{proof}
The equations in \eqref{Kspace} are all linear and continuous, hence $K_\eta^l$ 
is an open neighborhood of $0$ in a Hilbert space. Besides, it contains all the basic $1$-forms whose 
differential is $(1,1)$, and in particular all the $\partial\bar\partial f$ for $f$ a basic function. Finally, the Reeb flow of a sasakian manifold has no dense orbit, cf. \cite{BoyerBook} or \cite{Sparks}. Hence \eqref{Kspace} is infinite-dimensional.
\end{proof}

\begin{lemma}
\label{H89}
Hypotheses {\rm (H6), (H7)} and {\rm (H8)} are satisfied.
\end{lemma}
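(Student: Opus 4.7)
My plan is to verify each of (H6), (H7), and (H8) in turn, disposing first of the two formal hypotheses (H7) and (H8) and then carrying out the computational verification of (H6), which I expect to be the main difficulty.

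For (H8), I would identify $\mathcal{E}$ with the set of contact forms $\eta+\alpha$ close to $\eta$ rather than with their deviations $\alpha$; under this identification the action of $f\in G$ becomes the pure pullback $(\eta+\alpha)\mapsto f^*(\eta+\alpha)$. The vector bundle $B_1=T^*S$ is natural and pullback is its natural action on sections, so (H8) holds.

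For (H7$l$), observe that $\mathcal{E}$ is a closed linear subspace of the Hilbert space $\Gamma^l(T^*S)$, cut out by the continuous linear condition $\restriction{d\alpha}{E}=0$. The flat connection on $\Gamma^l(T^*S)$ is smooth and restricts to $\mathcal{E}^l$. A short check using $f_*E=E$ shows that the $G$-action $\alpha\mapsto f^*\alpha+(f^*\eta-\eta)$ preserves $\mathcal{E}^l$ and is affine in $\alpha$. Since any affine action on an affine space preserves the flat connection, (H7$l$) follows.

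The real work is (H6). Using the chart of Lemma \ref{Banach}, I would decompose $\chi=h\xi+\chi_N\in\mathcal{X}_0\oplus\mathcal{X}_N$ and write $\phi(\chi)=\exp(\chi_N)\circ\phi_0(h\xi)$. Since $\mathcal{X}_N$ is a finite-dimensional space of smooth vector fields, $\chi_N$ is automatically smooth, so smoothness of $\chi$ reduces to smoothness of $h$. A key observation is that any basic vector field orthogonal to $\xi$ commutes with the Reeb vector field: basicness gives $[\xi,\chi_N]=a\xi$ for some function $a$, while $\eta(\chi_N)=g(\xi,\chi_N)=0$ together with $\mathcal{L}_\xi\eta=0$ forces $a=\mathcal{L}_\xi(\eta(\chi_N))=0$. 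Hence $\exp(\chi_N)$ commutes with the Reeb flow $\Phi^\xi$, and $\eta':=\exp(\chi_N)^*\eta$ still satisfies $\eta'(\xi)=1$ and $\mathcal{L}_\xi\eta'=0$. Since $\phi_0(h\xi)(p)=\Phi^\xi_{h(p)}(p)$, the chain rule gives $d\phi_0(h\xi)_p(v)=d\Phi^\xi_{h(p)}(v)+dh_p(v)\,\xi_{\phi_0(h\xi)(p)}$; feeding this into $\eta'$ and using $(\Phi^\xi)^*\eta'=\eta'$ and $\eta'(\xi)=1$ yields $\phi_0(h\xi)^*\eta'=\eta'+dh$. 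Combining, $\phi(\chi)^*\eta-\eta=(\exp(\chi_N)^*\eta-\eta)+dh$; the first summand is always smooth, so smoothness of $\phi(\chi)^*\eta-\eta$ is equivalent to smoothness of $dh$, and hence of $h$, and hence of $\chi$, proving (H6).
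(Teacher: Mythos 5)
Your verification is correct, but your two nontrivial steps go by routes genuinely different from the paper's. For (H6) the paper does not compute in the chart at all: it takes an arbitrary $f\in G$ of class $W^l$ preserving $E$ with $f^*\eta-\eta$ smooth and bootstraps the regularity of $f$ itself --- smoothness of $f^*\eta$ gives smoothness of the transported Reeb field via \eqref{Reebcontact}, then of the transported CR structure via \eqref{fD01}, then of $f^*g$ via \eqref{geta}, and finally $f$ is smooth because it sends geodesics to geodesics (a Myers--Steenrod type argument). Your argument instead exploits the specific chart of Lemma \ref{Banach}: choosing the Sasakian metric there (as the paper explicitly permits), $\phi_0(h\xi)$ is the Reeb-time-$h$ map, $[\xi,\chi_N]=0$ as you show, and the identity $\phi(\chi)^*\eta-\eta=(\exp(\chi_N)^*\eta-\eta)+dh$ reduces (H6) to smoothness of $dh$; this is more elementary and yields the equivalence with smoothness of $\chi$ exactly as (H6) is phrased, but it is tied to that particular choice of chart, whereas the paper's bootstrap applies to every $f\in G$ and is the pattern reused for the general deformations of Section \ref{general}. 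For (H7) the paper follows Ebin: it builds a $G$-invariant weak Riemannian metric by integrating the Sasakian metrics $g_\alpha$ attached to each structure, upgrades it to a strong metric via $l$-jets, and takes the associated connection; your observation that $\mathcal E$ is an open piece of a closed linear subspace of $\Gamma^l(T^*S)$ on which the action $\alpha\mapsto f^*\alpha+(f^*\eta-\eta)$ is affine, so that the flat connection is already invariant (and the exponential is just addition), is simpler and fully adequate for the use made of (H7) here --- though the paper's metric construction is the one that transfers to Section \ref{general}, where the space of structures is no longer linear and the action no longer affine. Your reading of (H8) (identify $\alpha$ with the contact form $\eta+\alpha$, so the action is pullback on sections of the natural bundle $T^*S$) agrees with the paper, which simply declares (H8) immediate.
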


\begin{proof}
Assume that $f^*\eta-\eta$ and $\eta$ are $C^\infty$, with $f$ of class $W^l$ preserving $E$. We want to prove that $f$ is indeed $C^\infty$.
\vspace{5pt}\\
From $f^*\eta$ of class $C^\infty$ and \eqref{Reebcontact}, we deduce that $f_*\xi$ is also $C^\infty$. Moreover, since
\begin{equation}
\label{fD01}
f^*D^{0,1}=f^* E\cap \text{Ker }(f^*\eta) =E\cap \text{Ker }(f^*\eta)
\end{equation}
we have that $f\cdot \Phi$ is $C^\infty$. Finally \eqref{geta} implies that $f^*g$ is $C^\infty$. In particular, $f$ sends geodesics onto geodesics, a property that classically implies that $f$ is $C^\infty$.
\vspace{5pt}\\
Let us focus now on (H7). We construct an invariant riemannian metric on $\mathcal E$.
Each structure $\alpha\in\mathcal E$ encodes a unique sasakian metric $g_\alpha$ on $S$ through \eqref{geta}. This induces a unique riemannian metric
on the cotangent bundle of $S$, still denoted by $g_\alpha$. By integrating over $S$, we obtain a scalar product $h_\alpha$ on the space of
$1$-forms on $S$. The collection $(h_\alpha)$ is a riemannian metric on $\mathcal E$. It is obviously invariant under the action of the diffeomorphism group $G^\infty$. To show it is smooth,
we proceed as follows. Given $\alpha\in\mathcal E$, we define $D_\alpha$ as the kernel of $\eta+\alpha$, then $D^{0,1}_\alpha$ as the intersection of the complexification of $D_\alpha$ with $E$. This allows us to define $\Phi_\alpha$ and finally $g_\alpha$ through \eqref{geta}.

In this process, observe that
\begin{description}
 \item[(i)] To know $g_\alpha$ in a point $x\in M$, it is enough to know $\alpha(x)$ and $d\alpha(x)$.
 \item[(ii)] If $\alpha$ varies smoothly, then so does $g_\alpha$.
\end{description}

In other words, the map $\alpha\mapsto g_\alpha$ is a map from the $W^l$ sections of the bundle of $1$-jets of differential forms of degree $1$ into the $W^l$ sections of the bundle of symmetric $2$-tensors, which comes from a smooth vector bundle map. This is enough to show that it is smooth. It is then easy to see that $\alpha\mapsto h_\alpha$ is smooth, cf. \cite[p.18-19]{Ebin}. This gives a weak invariant metric. To get a strong one, one simply has to play the same game to obtain a weak metric on the bundle of $l$-jets of sections of $\mathcal E$, see \cite[p.20]{Ebin}. To be more precise, the weak invariant metric induces a weak invariant metric on each associated tensor bundle, and thus induces a weak riemannian metric on the bundle of $l$-jets of sections of $\mathcal E$. But this is equivalent to endowing the bundle of $W^l$-sections of $\mathcal E$ with a strong riemannian metric.
\vspace{5pt}\\
Finally Hypothesis (H8) is immediate. \end{proof}

 As an application of Lemma \ref{H89} and Theorem \ref{mainsmooth}, we thus have

\begin{corollary}
\label{KuranishiEfixed}
 Let $K_\eta^\infty$ be the subset of $C^\infty$ points of $K_\eta^l$. Then it is a Kuranishi type space for $C^\infty$ structures.
\end{corollary}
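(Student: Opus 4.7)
The plan is to apply Theorem \ref{mainsmooth}\,(2) directly, so the work consists in collecting the hypotheses already verified in the preceding discussion and checking that nothing is missing in the smooth setting.

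First, I would recall the status of the analytic hypotheses at the Sobolev level. The computation of the differential $L$ in Lemma \ref{Llemma} gives (H2$l$) with $P\chi=\mathcal L_\chi\eta$, and the explicit decomposition $P\chi=dh+i_{\chi_N}d\eta$ shows that $\mathrm{Im}\,P$ is a sum of a closed subspace and a finite-dimensional one, hence closed; together with the Hilbert structure of $\mathcal E^l$ (giving orthogonal complements) this yields (H1$l$) and (H3$l$). The isotropy group at $\eta$ is the automorphism group of the sasakian manifold $(S,E,\eta)$, which is a finite-dimensional Lie group of $C^\infty$ diffeomorphisms; taking the time-one flow of the corresponding Killing fields as $\psi$ furnishes (H4$l$). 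Since its elements are smooth, the standard calculation in Proposition \ref{Pelliptic} (smoothness of composition when the left factor is smooth) then provides (H5$l$).

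Next I would invoke Lemma \ref{H89}, which is precisely the verification of (H6), (H7$l$) and (H8) in the present situation: (H6) follows from the fact that $f^\ast\eta$ together with $E$-preservation recovers $f^\ast g$ through \eqref{geta} so that $f$ sends geodesics to geodesics; (H7$l$) is supplied by the strong $G$-invariant riemannian metric built from the family $\alpha\mapsto g_\alpha$; and (H8) is immediate because $T^\ast S$ is a natural bundle and the $G$-action is pull-back.

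With all the hypotheses of Theorem \ref{mainsmooth}\,(2) in hand, I would apply that theorem to the base point $\eta\in\mathcal I$ as in \eqref{Icontact}. This produces, via Propositions \ref{expCinfini} and \ref{Cinfini}, a smooth local section obtained by intersecting $\tilde K$ with $\mathcal I$ and restricting to the set of $C^\infty$ points; by Proposition \ref{Cinfini} the map \eqref{Kuranishi} and its inverse \eqref{inverse} preserve the $C^\infty$ class, so the resulting section is exactly $K_\eta^\infty$, the set of $C^\infty$ points of $K_\eta^l$ defined in \eqref{Kspace}. The theorem then asserts that $K_\eta^\infty$ is a Kuranishi type space for $C^\infty$ structures. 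The only step that is not completely routine is (H7$l$): one has to keep the parameter $l$ fixed and use the \emph{strong} invariant metric (as in Remark \ref{remarkEbin}), since only this choice produces a smooth exponential map on $\mathcal E^l$; the weak metric used in \cite{Ebin} would not suffice here. Everything else is a direct bookkeeping of what has already been established.
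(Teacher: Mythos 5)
Your proposal is correct and follows essentially the same route as the paper: the paper deduces the corollary directly from the previously verified hypotheses (H1)--(H5) at the Sobolev level, Lemma \ref{H89} for (H6), (H7$l$), (H8), and an application of Theorem \ref{mainsmooth}. Your additional remarks (unfolding Propositions \ref{expCinfini} and \ref{Cinfini}, and the need for the strong invariant metric as in Remark \ref{remarkEbin}) are consistent with the paper's construction and do not change the argument.
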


We can give a more precise description of \eqref{Kspace} by computing $P^*$. We can rewrite $P$ as
\begin{equation}
\label{Pbis}
(h,\chi)\in \Gamma^{l+1}(\mathbb R)\times{\mathcal X}_N\longmapsto dh+i_{\chi_N}d\eta\in\mathcal E.
\end{equation}
As usual, we let $g$ denote the sasakian metric of the base structure. In what follows, we extend $g$ to the $1$-forms and all the tensor fields. Then we use the $L^2$ product associated to $g$ on the tensor fields. In particular, on $\Gamma^{l+1}(\mathbb R)\times{\mathcal X}_N$, we use the sum of the $L^2$ product on the functions and that on the vector fields. We denote this sum as well as all the $L^2$ products by the same symbol $\langle -,-\rangle$.
\vspace{5pt}\\
Going back to \eqref{Pbis}, observe that the sum in the right expression is not a direct sum. But, defining the closed vector subspace
$$
\mathcal X'_N:=\left (\left (\restriction{P}{\{0\}\times\mathcal X_N}\right )^{-1}(P(\Gamma^{l+1}(\mathbb R)\times\{0\}))\right )^\perp
$$
then \eqref{Pbis} becomes
\begin{equation}
\label{Pter}
(h,\chi)\in \Gamma^l(\mathbb R)\times{\mathcal X'}_N\longmapsto dh\oplus i_{\chi_N}d\eta\in\mathcal E.
\end{equation}
Define now
\begin{equation}
\label{diese}
\beta\in \Gamma^l (T^*S)\longmapsto \beta^\sharp\in\Gamma^l(TS)
\end{equation}
by
\begin{equation}
\label{diesedef}
g(\beta^\sharp,-)=\beta.
\end{equation}
Observe that, with this convention,
\begin{equation}
\label{sharpproperty}
g(\alpha,\beta)=g(\alpha^\sharp,\beta^\sharp)=\alpha(\beta^\sharp)=\beta(\alpha^\sharp).
\end{equation}
We have
\begin{lemma}
\label{adjointeta}
 The adjoint of \eqref{Pter} is given by the formula
 $$
 \label{Padjointeta}
 P^*\alpha=(d^*\alpha,-(i_{\alpha^\sharp}d\eta)^\sharp)
 $$
 for $\alpha\in\mathcal E$ and $d^*$ the codifferential on $1$-forms.
\end{lemma}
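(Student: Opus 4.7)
The strategy is a direct $L^2$ computation: we must verify that for every $(h,\chi)\in \Gamma^{l+1}(\mathbb R)\times \mathcal X'_N$ and every $\alpha\in\mathcal E$,
\[
\bigl\langle P(h,\chi),\alpha\bigr\rangle = \bigl\langle (h,\chi),\, (d^*\alpha,\, -(i_{\alpha^\sharp}d\eta)^\sharp)\bigr\rangle.
\]
Since $P(h,\chi)=dh + i_\chi d\eta$ and the inner product on $\Gamma^{l+1}(\mathbb R)\times \mathcal X'_N$ is the sum of the scalar and vector $L^2$ products, this splits into two independent pieces that can be treated separately.

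The first piece, $\langle dh,\alpha\rangle = \langle h, d^*\alpha\rangle$, is just the defining property of the codifferential, so nothing needs to be done. The second piece is the only real computation. Unwinding the definitions,
\[
\langle i_\chi d\eta,\alpha\rangle \;=\; \int_S g(i_\chi d\eta,\alpha)\,\mathrm{vol}_g
\;=\; \int_S (i_\chi d\eta)(\alpha^\sharp)\,\mathrm{vol}_g
\;=\; \int_S d\eta(\chi,\alpha^\sharp)\,\mathrm{vol}_g,
\]
where the second equality uses \eqref{sharpproperty}. The antisymmetry of $d\eta$ rewrites this as $-\int_S (i_{\alpha^\sharp}d\eta)(\chi)\,\mathrm{vol}_g$, and a second application of \eqref{sharpproperty} converts it into $-\int_S g\bigl((i_{\alpha^\sharp}d\eta)^\sharp,\chi\bigr)\,\mathrm{vol}_g = \bigl\langle \chi,\, -(i_{\alpha^\sharp}d\eta)^\sharp\bigr\rangle$. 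Putting both pieces together yields the claim.

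There is no serious obstacle here, only one bookkeeping subtlety worth flagging: $\mathcal X'_N$ was defined as a closed subspace of $\mathcal X_N$, so strictly speaking the second component of the adjoint should be the orthogonal projection of $-(i_{\alpha^\sharp}d\eta)^\sharp$ onto $\mathcal X'_N$. However, the computation above is valid against every test element of $\mathcal X'_N$, which is all that is needed to identify $P^*$ on that factor; equivalently, the displayed formula gives the adjoint of the natural extension of $P$ to $\Gamma^{l+1}(\mathbb R)\times \mathcal X_N$, and its restriction to $\mathcal X'_N$ is the desired one. In particular, the condition $P^*\alpha=0$ used in \eqref{Kspace} is unambiguous.
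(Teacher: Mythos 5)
Your computation is correct and is essentially the paper's own proof: the same direct $L^2$ verification using the defining property of $\sharp$ (equations \eqref{diesedef} and \eqref{sharpproperty}), the antisymmetry of $d\eta$, and the adjointness of $d$ and $d^*$, merely written in the direction $\langle P(h,\chi),\alpha\rangle=\langle(h,\chi),P^*\alpha\rangle$ rather than starting from the candidate formula. Your closing remark about projecting the second component onto $\mathcal X'_N$ is a reasonable bookkeeping observation that the paper leaves implicit, but it does not change the argument.
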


\begin{proof}
 Just compute 
 $$
 \langle P^*\alpha,(h,\chi)\rangle=\langle d^*\alpha,h\rangle-\langle (i_{\alpha^\sharp}d\eta)^\sharp),\chi\rangle=\langle \alpha,dh\rangle
 - \int_Sd\eta (\alpha^\sharp, \chi)vol_g
 $$
 because of \eqref{diesedef} and of \eqref{sharpproperty}. But this is exactly
 $$
 \langle \alpha,dh\rangle+\int_S i_{\chi_N} d\eta (\alpha^\sharp )vol_g=\langle \alpha,dh\rangle
 +\langle \alpha, i_{\chi_N} d\eta\rangle
 $$
 finishing the proof.
\end{proof}

Let us treat rapidly the associated case where the contact form is deformed, keeping $E$ and $\xi$ fixed. The following statement is analogous to Corollary \ref{alpha} and is easy to prove.

\begin{corollary}
Let $(S,g,\xi,\eta, \Phi)$ be a sasakian manifold. Define $E$ as in {\rm (\ref{E})}. Then, there exists a neighborhood $U_0$ of $0$ in the space of $1$-forms on $S$ such that, for all $\alpha\in U_0$, the following conditions are equivalent
\begin{enumerate}
\item The triple $(S,E, \eta+\alpha)$ is a sasakian manifold with Reeb vector field $\xi$.
\item The $1$-form $\alpha$ is basic and its differential is $(1,1)$, that is satisfies
$$
i_\xi\alpha=\restriction{d\alpha}{E}=0.
$$
\end{enumerate}
\end{corollary}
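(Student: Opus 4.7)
The statement is a refinement of Corollary~\ref{alpha}: it adds the requirement that the Reeb vector field be preserved, and the plan is to show that this extra requirement is exactly what turns the single condition $\restriction{d\alpha}{E}=0$ into the two conditions $i_\xi\alpha=0$ and $\restriction{d\alpha}{E}=0$ (recalling that $\alpha$ basic is $i_\xi\alpha=i_\xi d\alpha=0$, and that on $\bar E\cap E=\mathbb C\xi$ the condition $\restriction{d\alpha}{E}=0$ already controls $i_\xi d\alpha$ via the argument used in the proof of Corollary~\ref{alpha}).

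First, assume (1). Then $(S,E,\eta+\alpha)$ is sasakian, so by Corollary~\ref{Echar} we have $\restriction{d(\eta+\alpha)}{E}=0$; since $\restriction{d\eta}{E}=0$ from the original sasakian structure, this gives $\restriction{d\alpha}{E}=0$. Now, saying that $\xi$ is the Reeb vector field of $\eta+\alpha$ amounts by \eqref{Reebcontact} to $i_\xi(\eta+\alpha)\equiv 1$ and $i_\xi d(\eta+\alpha)\equiv 0$; since $i_\xi\eta\equiv 1$ and $i_\xi d\eta\equiv 0$, these reduce to $i_\xi\alpha=0$ and $i_\xi d\alpha=0$. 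Together with $\restriction{d\alpha}{E}=0$, this is exactly (2).

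Conversely, assume (2). The condition $\restriction{d\alpha}{E}=0$ together with Corollary~\ref{alpha} (taking $U_0$ small enough so that $\eta+\alpha$ is still a contact form and the positivity condition of Corollary~\ref{Echar}(5) is preserved) gives that $(S,E,\eta+\alpha)$ is a sasakian manifold. It only remains to check that its Reeb vector field is $\xi$; but $i_\xi(\eta+\alpha)=1+i_\xi\alpha=1$ by hypothesis, and $i_\xi d(\eta+\alpha)=i_\xi d\alpha=0$, which is the second half of $\alpha$ being basic. By uniqueness of the Reeb vector field determined by \eqref{Reebcontact}, $\xi$ is the Reeb vector field of $\eta+\alpha$, proving (1).

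There is essentially no obstacle here: the only mildly subtle point is to remember that the notion of basic form for the Reeb foliation is equivalent to the pair of conditions $i_\xi\alpha=0$ and $i_\xi d\alpha=0$ (via Cartan's formula $\mathcal L_\xi\alpha=i_\xi d\alpha+d i_\xi\alpha$), and to verify that the choice of $U_0$ from Corollary~\ref{alpha} (or a smaller neighborhood) still works here, which is automatic since we are imposing an additional closed condition on $\alpha$.
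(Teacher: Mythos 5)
Your proof is correct and is exactly the argument the paper intends: the paper omits the proof, stating only that the corollary is ``analogous to Corollary \ref{alpha} and easy to prove'', and your write-up fleshes out precisely that analogy (apply Corollary \ref{Echar}/\ref{alpha} for the sasakian condition, then observe that $i_\xi\alpha=0$ together with $i_\xi d\alpha=0$ — the latter already forced by $\restriction{d\alpha}{E}=0$ and the reality of $\alpha$, as in the proof of Corollary \ref{alpha} — pins down $\xi$ as the Reeb field of $\eta+\alpha$ via the uniqueness in \eqref{Reebcontact}). Nothing further is needed.
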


Let $G$ be the topological group of diffeomorphisms $f$ of $S$ of class $W^{l+1}$ such that
$$
f_*\xi\equiv \xi\qquad\text{ and }\qquad f_*E\equiv E.
$$
With this new statement and this new group on mind, one obtains easily the

\begin{corollary}
The space $(K')^l_\eta$ defined as 
$$
(K')^l_\eta:=\{\alpha\in U_0\mid P^*\alpha=i_\xi\alpha=\restriction{d\alpha}{E}=0\}
$$
is an open neighborhood of $0$ in a infinite-dimensional Hilbert space and is a Kuranishi type space for $\eta$.
\end{corollary}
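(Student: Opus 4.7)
The plan is to follow the exact template used to build $K_\eta^l$, making only the modifications forced by fixing the Reeb vector field $\xi$ in addition to $E$. Concretely, the ambient Hilbert space of structures becomes the closed subspace
$$
\mathcal E=\mathcal I=\{\alpha\in\Gamma^l(T^*S)\mid i_\xi\alpha=0,\ \restriction{d\alpha}{E}=0\},
$$
and the group $G$ is replaced by the subgroup of $W^{l+1}$-diffeomorphisms preserving both $E$ and $\xi$. The action $\alpha\mapsto f^*(\eta+\alpha)-\eta$ still maps $\mathcal E$ to itself: the identities $i_\xi f^*(\eta+\alpha)=(\eta+\alpha)(f_*\xi)=(\eta+\alpha)(\xi)=1$ and $f^*E=E$ imply that both defining conditions of $\mathcal E$ are preserved.

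First I would produce the analogue of Lemma \ref{Banach}. The Lie algebra of the new $G$ consists of vector fields $\chi=h\xi+\chi_N$ with $\chi_N\in\mathcal X_N$ and $h$ \emph{basic} (so that $[h\xi,\xi]=-(\xi\cdot h)\xi$ vanishes). Writing $\mathcal X_0^{\mathrm{bas}}$ for the space of basic multiples of $\xi$, the same formula
$$
h\xi+\chi_N\in \mathcal X_0^{\mathrm{bas}}\oplus\mathcal X_N\longmapsto \exp(\chi_N)\circ\phi_0(h\xi)
$$
gives a Fr\'echet chart of $G^\infty$ extending to a Hilbert chart of $G^l$, since $\mathcal X_0^{\mathrm{bas}}$ is closed in $\mathcal X_0$.

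Next I would compute $L$ and $P$. The argument of Lemma \ref{Llemma} is unchanged and yields $L(\chi,\alpha)=\alpha+\mathcal L_\chi\eta$, hence
$$
P\chi=\mathcal L_\chi\eta=dh+i_{\chi_N}d\eta,
$$
now with $h$ basic. One verifies that $P\chi\in\mathcal E$: $i_\xi dh=\xi\cdot h=0$ because $h$ is basic, $i_\xi i_{\chi_N}d\eta=-i_{\chi_N}i_\xi d\eta=0$ by \eqref{Reebcontact}, and the computation in the original closure-of-image proof shows $\restriction{d(P\chi)}{E}=0$. Closure of $\mathrm{Im}\,P$ then follows exactly as in the $K_\eta^l$ case: it is the sum of the image of the de Rham differential on basic $W^{l+1}$-functions (closed in $\Gamma^l(T^*S)$, since basic functions form a closed subspace and Hodge theory for the basic complex supplies a bounded right inverse on the image) and the image of the finite-dimensional space $\mathcal X_N$ under a bounded operator. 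Hypotheses (H1) and the second part of (H3) are automatic in the Hilbert setting, (H2) is Lemma \ref{Llemma}, the first part of (H3) is the closure just established, and (H4), (H5) follow from the same argument as before: the isotropy group of $\eta$ for this new $G$-action is still contained in the automorphism group of $(S,E,\eta)$, which is finite-dimensional and consists of smooth maps, so the time-one flow provides $\psi$. Theorem \ref{main} then delivers a Kuranishi type space, which by Remark \ref{star} is cut out from $U_0$ by $P^*\alpha=0$ together with the defining equations of $\mathcal E$; this is exactly $(K')_\eta^l$.

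The infinite-dimensionality argument is the same: for every basic smooth function $f$, the form $\partial\bar\partial f$ is basic, $(1,1)$, satisfies $i_\xi\partial\bar\partial f=0$, and lies in $\ker P^*$ after the standard orthogonal projection, and the absence of dense Reeb orbits on a sasakian manifold forces the space of basic functions to be infinite-dimensional. The main technical point where care is needed is the closure of $\mathrm{Im}\,P$ under the stricter regularity constraint that $h$ be basic, but this causes no problem because basic $W^{l+1}$-functions form a closed subspace of $\Gamma^{l+1}(\mathbb R)$ and the de Rham differential is continuous; all other steps are formally identical to those leading to the theorem on $K_\eta^l$.
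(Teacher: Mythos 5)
Your proposal is correct and follows essentially the same route as the paper, which obtains $(K')^l_\eta$ by rerunning the construction of $K^l_\eta$ with the group of diffeomorphisms preserving both $E$ and $\xi$ and with $P$ restricted to the subspace $\mathcal X_b$ of fields whose $\xi$-coordinate is basic (so that $P^*$ becomes the old adjoint composed with the projection onto $\mathcal X_b$). The extra details you supply (the adapted chart of Lemma \ref{Banach} with basic $h$, the verification that the action and $P$ preserve the constraint $i_\xi\alpha=0$, and the closedness of $d$ on basic functions) are exactly the points the paper leaves implicit, and they are handled correctly.
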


Here, the operator $P$ is the same as that appearing in Lemma \ref{Llemma}, but restricted to the subspace $\mathcal X_b$ of vector fields of $\mathcal X$ whose $\xi$-coordinate is basic. This implies that $P^*$ is slightly different from that of \eqref{Kspace}. Indeed it is the composition of this latter operator with the projection onto $\mathcal X_b$.

Also, using the same arguments as above,
\begin{corollary}
\label{KuranishiEbisfixed}
 Let $(K')_\eta^\infty$ be the subset of $C^\infty$ points of $(K')_\eta^l$. Then it is a Kuranishi type space for $C^\infty$ structures.
\end{corollary}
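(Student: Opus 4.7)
The plan is to mirror the proof of Corollary \ref{KuranishiEfixed} in this restricted setting, where both $E$ and $\xi$ are kept fixed. First I would check that the adapted group $G$ of $W^{l+1}$-diffeomorphisms preserving both $\xi$ and $E$ admits a Fr\'echet chart at $e$ extending to a Hilbert chart, by restricting the chart constructed in Lemma \ref{Banach}: the Lie algebra of $G$ consists of those vector fields in $\mathcal X$ that commute with $\xi$, which is precisely $\mathcal X_b$ (the subspace whose $\xi$-coordinate is basic). Restricting the chart of Lemma \ref{Banach} to this closed subspace provides the required Hilbert chart on the smaller group.

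Next, I would verify that Hypotheses (H1)--(H5) still hold. (H1) and the second part of (H3) are automatic in the Hilbert setting. The differential $L$ still has the form $L(\chi,\alpha)=\alpha+\mathcal L_\chi \eta$ as in Lemma \ref{Llemma}, now restricted to $\chi\in\mathcal X_b$, and the image of $P$ remains closed by the same decomposition argument (sum of an exact piece in $dh$ with $h$ basic and a finite-dimensional contribution from $\chi_N$). The isotropy group of $\eta$ is the same sasakian automorphism group, which already lies in the smaller $G$, so (H4) and (H5) transfer verbatim via the time-$1$ flow. This provides, via Theorem \ref{main}, the Kuranishi type space $(K')^l_\eta$ in the $W^l$ setting.

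To pass to $C^\infty$, I would re-establish (H6), (H7$l$), (H8) exactly as in Lemma \ref{H89}. Since the new group is a subgroup of the old one, the argument for (H6) applies unchanged: any $f$ in the new $G^l$ with $f^*\eta-\eta$ smooth still preserves $E$, so from \eqref{fD01} and \eqref{geta} one deduces $f^*g$ smooth, hence $f$ smooth (it maps geodesics to geodesics). The invariant (strong) riemannian metric on $\mathcal E$ constructed in the proof of Lemma \ref{H89} is manifestly invariant under the smaller group, giving (H7$l$); (H8) is automatic since the action is by pull-back on a natural bundle. Theorem \ref{mainsmooth} then delivers the conclusion that $(K')_\eta^\infty$ is a Kuranishi type space for $C^\infty$ structures.

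I do not anticipate a genuine obstacle here: the whole point is that passing from the $E$-preserving group to the $(E,\xi)$-preserving group replaces $\mathcal X$ by its closed subspace $\mathcal X_b$, and every analytic input (closedness of $\operatorname{Im} P$, smoothness and invariance of the connection, regularity of isotropy elements) is either inherited by restriction or already proved for the larger group.
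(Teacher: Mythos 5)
Your proposal is correct and matches the paper, which proves this corollary simply by invoking ``the same arguments as above'' — i.e.\ re-running Lemma \ref{Banach}, the closedness of $\operatorname{Im}P$ restricted to $\mathcal X_b$, Hypotheses (H4)--(H5) via the (unchanged) sasakian automorphism group, and Lemma \ref{H89} together with Theorem \ref{mainsmooth}. Your write-up just makes explicit the restriction-to-a-closed-subspace observations that the paper leaves implicit.
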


Finally, we have 

\begin{proposition}
\label{sasakienH0=0}
Assume that the Lie algebra $\mathcal X_N$ is zero. Then $K^l_\eta$ and $K^\infty_\eta$ (respectively $(K')_\eta^l$ and $(K')^\infty_\eta$) are local moduli spaces.
\end{proposition}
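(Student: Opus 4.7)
My plan is to apply Proposition \ref{LMS} (and Theorem \ref{mainsmooth}(3) for the $C^\infty$ versions) by verifying hypotheses (H4') and (H5') on a neighborhood of $\eta$ in $\mathcal E = \mathcal I$. The hypothesis $\mathcal X_N = 0$ collapses the whole Lie algebra $T = \mathcal X = \mathcal X_0 \oplus \mathcal X_N$ to $\mathcal X_0 = \{h\xi\}$: every infinitesimal automorphism of $(S, E)$ is a multiple of $\xi$. Since the transverse holomorphic structure $E$ is held fixed in the deformation problem, this restriction on $T$ persists at every $\eta + \alpha \in K^l_\eta$, which is what makes the argument work.

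At the base, Lemma \ref{Llemma} gives $P(h\xi) = \mathcal L_{h\xi}\eta = dh$ (because $i_\xi d\eta = 0$), so $E_\eta = \mathbb R\xi$ is one-dimensional. For a general $\alpha \in \mathcal E$ small, the defining equation $\restriction{d\alpha}{E} = 0$ together with the inclusion $\xi \in E\cap \bar E$ forces $i_\xi d\alpha = 0$, and Cartan's formula yields
\[
\mathcal L_{h\xi}(\eta+\alpha) = d\bigl(h\,\mu\bigr), \qquad \mu := 1+\alpha(\xi).
\]
Since $\mu$ is positive for $\alpha$ small, $\mathcal L_{h\xi}(\eta+\alpha) = 0$ iff $h\mu$ is a constant, i.e.\ iff $h\xi = c\,\xi/\mu = c\,\xi_{\eta+\alpha}$, where $\xi_{\eta+\alpha}$ denotes the Reeb vector field of $\eta+\alpha$. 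Thus the isotropy Lie algebra at $\eta+\alpha$ is $\mathbb R\,\xi_{\eta+\alpha}$, canonically isomorphic to $E_\eta$.

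To build the map required by (H4'), I would set $\psi(\alpha, t\xi) := \exp\bigl(t\,\xi_{\eta+\alpha}\bigr)$, the time-$t$ flow of the Reeb field. The algebraic formula $\xi_{\eta+\alpha} = \xi/\mu$ makes $\xi_{\eta+\alpha}$ a smooth function of $\alpha$ (and $C^\infty$ in the space variable when $\alpha$ is), and its flow inherits that smoothness. One has $\psi(\alpha, 0) = e$ and $\psi(\alpha, E\cap V) = G_{\eta+\alpha}\cap W$ locally by the previous step, so (H4') holds. For (H5'), the composition $\psi(J, \xi) \circ h$ amounts to composing the smooth diffeomorphism $\exp(t\,\xi_{\eta+\alpha})$ with a $W^{l+1}$ diffeomorphism $h$, which is $C^1$ by the argument already used in Proposition \ref{Pelliptic}; a direct computation shows its differential at $(0, e)$ is the additive map $E\times T \to T$ prescribed by (H5'). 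Proposition \ref{LMS} then yields that $K^l_\eta$ is a local moduli space, and Theorem \ref{mainsmooth}(3) — whose remaining hypotheses (H6), (H7), (H8) were established in Lemma \ref{H89} — upgrades this to $K^\infty_\eta$. The case of $(K')^l_\eta$ and $(K')^\infty_\eta$ is strictly simpler: the additional condition $i_\xi\alpha = 0$ makes $\mu\equiv 1$, so $\xi_{\eta+\alpha} = \xi$ and the parametrization $\psi(\alpha, t\xi) = \exp(t\xi)$ is independent of $\alpha$.

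The step I expect to be the main obstacle is the identification of $E_{\eta+\alpha}$: one must exploit that $E$ is held fixed — so that the decomposition $\mathcal X = \mathcal X_0 \oplus \mathcal X_N$ is unchanged under deformation — in order to rule out new transverse infinitesimal automorphisms appearing at nearby structures. Once this rigidity of $T$ is in hand, the remaining verifications are routine given the machinery of Sections \ref{existence} and \ref{existencesmooth}.
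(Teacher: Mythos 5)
Your proposal is correct and follows essentially the same route as the paper: identify the isotropy Lie algebra at every nearby structure as the constant multiples of the corresponding Reeb vector field (using that $E$ is fixed and $\mathcal X_N=0$), take $\psi$ to be the flow of that Reeb field, verify (H4') and (H5'), and conclude via Proposition \ref{LMS} together with Theorem \ref{mainsmooth}. The only difference is that you make explicit, via Cartan's formula and the identity $\xi_{\eta+\alpha}=\xi/(1+\alpha(\xi))$, the facts that the paper simply asserts (one-dimensionality of the isotropy algebra at nearby structures and smoothness of $\eta'\mapsto\xi'$), which is a welcome but not essentially different elaboration.
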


\begin{remark}
By \cite{DK}, this is equivalent to saying that the group of holomorphic basic infinitesimal automorphisms $H^0(S,\Theta)$ is zero.
\end{remark}

\begin{proof}
The automorphism group of $(E,\eta)$ consists of diffeomorphisms fixing $E$ and $\eta$. It is a finite dimensional Lie group whose Lie algebra consists of vector fields
$$
\chi=h\xi\oplus \chi_N\quad\text{ such that }\quad [\chi, E]\subset E,\ \mathcal L_\chi \eta=0.
$$
The $E$-preservation implies that $\chi_N$ belongs to $\mathcal X_N$.
\vspace{5pt}\\
If $\mathcal X_N$ is zero, then $\chi$ is a multiple of $\xi$ and, since
$$
\mathcal L_\chi\eta=dh,
$$
it is a constant multiple. Hence this Lie algebra is reduced to constant multiples of $\xi$ and is one-dimensional. 
\vspace{5pt}\\
Consider firstly the case of $E$ and $\xi$ fixed. For any other structure $(E,\eta')$, the Lie algebra of infinitesimal automorphisms is still equal to $\mathbb C\cdot\xi$. Hence, defining $\psi(\eta',\lambda\xi)$ as the time $1$ flow of the vector field $\lambda\xi$, we immediately have (H4') and (H5') fulfilled. The conclusion follows from Theorem \ref{main} and Theorem \ref{mainsmooth}.
\vspace{5pt}\\
Consider secondly the case of $E$ fixed. For any other structure $(E,\eta')$, since we keep the same $E$, the Lie algebra of infinitesimal automorphisms it is still one-dimensional and generated by the constant multiples of the corresponding Reeb vector field $\xi'$. Observe that the mapping $\eta'\mapsto \xi'$ is smooth. Hence, defining $\psi(\eta',\lambda\xi)$ as the time $1$ flow of the vector field $\lambda\xi'$, it is easy to check that (H4'), and (H5') are satisfied. So, once again, we may apply Theorem \ref{main} and Theorem \ref{mainsmooth}.
\end{proof} 

\subsection{General deformations}
\label{general}
We now deal with the case of general deformations of sasakian manifolds. Using corollary \ref{Echar}, this means deforming both $E$ and $\eta$.
\vspace{5pt}\\
Let $S$ be a compact smooth manifold of dimension $2n+1$. Let $\mathcal G$ be the grassmannian bundle of complex $(n+1)$-planes of $T_{\mathbb C}S$. As usual, we fix some positive $l$ and consider sections of class $W^l$ of the bundles.
\vspace{5pt}\\
Set
$$
\mathcal E=\{(E,\eta)\in\Gamma^l(\mathcal G)\times\Gamma^l(T^*S)\mid E+\bar E=T_{\mathbb C}S,\ \eta \text{ positive contact}\}
$$
Here by $\eta$ positive contact, we mean that $\eta$ is a contact form satisfying
\begin{equation}
\label{positivecontact}
d\eta(V,i\bar V)+d\eta(\bar V,-iV)>0
\end{equation}
for all non-zero vector of $E\cap D_{\mathbb C}$.
\vspace{5pt}\\
Observe that $\mathcal E$ is an open subset of the Hilbert space $\Gamma^l(\mathcal G)\times\Gamma^l(T^*S)$. 
Set now
$$
\mathcal I=\{(E,\eta)\in\mathcal E\mid [E,E]\subset E,\ \restriction{d\eta}{E}\equiv 0\}
$$
By corollary \ref{Echar}, the closed set $\mathcal I$ of $\mathcal E$ is exactly the set of sasakian structures of class $W^l$ on $S$.
\vspace{5pt}\\
Let $(E,\eta)\in\mathcal I$. A local chart for $\mathcal E$ at $(E,\eta)$ is given by
\begin{equation}
\label{Ecarte}
(\omega,\alpha)\in \Gamma^l((E^*\otimes D^{1,0})\oplus T^*S)
\mapsto ((Id-\omega)E,\eta+\alpha)\in\mathcal E
\end{equation}
Recall that
$$
T_{\mathbb C}S= D^{1,0}\oplus E=D^{1,0}\oplus D^{0,1}\oplus \mathbb C \xi.
$$
These three subbundles are involutive and correspond to foliated coordinates $(z,\bar z,t)$. In local foliated coordinates, we may thus decompose the de Rham differential as
$$
d=\partial+\bar\partial+\partial_t. 
$$
One can show that the operator $\bar\partial+\partial_t$ is indeed globally defined, whereas $\bar\partial$ and $\partial_t$ are not, cf. \cite{DK}.
\vspace{5pt}\\
For $\chi$ a smooth vector field, using the natural injection of $TS$ into $T_{\mathbb C}S$, we decompose it accordingly into
$$
\chi=\chi^{1,0}\oplus \chi^E=\chi^{1,0}\oplus\chi^{0,1}\oplus \chi^{\xi}
$$
where $\chi^{0,1}=\overline{\chi^{1,0}}$ and $\chi^\xi$ is real. 
\vspace{5pt}\\
The bundle $D^{1,0}$ is isomorphic to $N^{1,0}\F$ through the map \eqref{pi}, hence, in local foliated coordinates $(z,\bar z,t)$, is locally generated by the vector fields
$$
e_i=\dfrac{\partial}{\partial z_i}+a_i\xi
\leqno i=1,\hdots, n
$$
for some complex valued functions $a_i$. Such a field belongs to $D^{1,0}$ if it is in the kernel of $\eta$, hence we have
\begin{equation}
\label{ei}
e_i=\dfrac{\partial}{\partial z_i}-\eta\left (\dfrac{\partial}{\partial z_i}\right )\xi.
\end{equation}
Since $D^{1,0}$ is invariant under the flow of $\xi$, the transition functions of the bundle $D^{1,0}$ can be chosen as the transverse changes of charts of $\F$. Hence they are holomorphic and independent of $t$, and we may thus extend the operator $\bar\partial+\partial_t$ as a global operator acting on $(1,0)$-vector fields.
\vspace{5pt}\\
We are now in position to compute the integrability conditions and the differential of the action.
\begin{lemma}
\label{integrable}
The closed set $\mathcal I$ is locally isomorphic to the analytic set in $\Gamma^l((E^*\otimes D^{1,0})\oplus T^*S)$ given by the equations
\[\left\{\begin{array}{l}
(\bar\partial+\partial_t)\omega+\dfrac{1}{2}[\omega,\omega]=0\\
\\
Q(\omega,\alpha):=\restriction{\Big (d\alpha(Id-\omega,Id-\omega)-d\eta(\omega, Id)-d\eta(Id,\omega)\Big )}{E}\equiv 0.
\end{array}\right .\]
\end{lemma}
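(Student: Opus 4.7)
The plan is to evaluate, in the chart \eqref{Ecarte}, the two conditions $[E',E']\subset E'$ and $\restriction{d\eta'}{E'}=0$ with $E'=(Id-\omega)E$ and $\eta'=\eta+\alpha$, and to show that each condition is equivalent to the corresponding equation in the statement.

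First I would handle the integrability condition. Let $V,W\in \Gamma(E)$; then $V'=(Id-\omega)V$ and $W'=(Id-\omega)W$ are local sections of $E'$. The classical Kuranishi-style computation for a deformation of an involutive subbundle expresses $[V',W']$ modulo $E'$ as a first-order expression in $\omega$ plus a quadratic correction $\frac{1}{2}[\omega,\omega]$; the key point is to identify the linear part. Since $E=D^{0,1}\oplus\mathbb C\xi$ and the complement $D^{1,0}$ together with the transition functions of $\F$ give meaning to $\bar\partial$ and $\partial_t$ separately (with only $\bar\partial+\partial_t$ globally defined on $D^{1,0}$-valued forms on $E$, by the discussion following \eqref{ei}), a direct computation in foliated coordinates $(z,\bar z,t)$ shows that the projection of $[V',W']$ onto $D^{1,0}$ along $E$ equals
$$
\big((\bar\partial+\partial_t)\omega+\tfrac{1}{2}[\omega,\omega]\big)(V,W)\ \in\ D^{1,0}.
$$
Involutivity of $(Id-\omega)E$ is thus equivalent to vanishing of this $D^{1,0}$-valued $2$-form on $E$, giving the first equation.

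Next I would treat the contact condition. For $V,W\in\Gamma(E)$ compute
$$
d(\eta+\alpha)(V',W')=d\eta(V,W)-d\eta(\omega V,W)-d\eta(V,\omega W)+d\eta(\omega V,\omega W)+d\alpha(V',W').
$$
The term $d\eta(V,W)$ vanishes since $(E,\eta)\in\mathcal I$, and $d\eta(\omega V,\omega W)=0$ because $d\eta$ is of type $(1,1)$ (by \eqref{3} and the discussion in Section \ref{prelim}) while $\omega V,\omega W\in D^{1,0}$. What remains is precisely
$$
\restriction{\Big(d\alpha(Id-\omega,Id-\omega)-d\eta(\omega,Id)-d\eta(Id,\omega)\Big)}{E},
$$
so $\restriction{d(\eta+\alpha)}{E'}=0$ is equivalent to $Q(\omega,\alpha)=0$.

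Finally, one has to check that the other, \emph{open}, conditions defining $\mathcal E$ (namely $E+\bar E=T_{\mathbb C}S$ and that $\eta+\alpha$ be positive contact) are automatically preserved for $(\omega,\alpha)$ small, so that the local description of $\mathcal I$ really is cut out by the two equations above, as an analytic subset of the Hilbert space $\Gamma^l((E^*\otimes D^{1,0})\oplus T^*S)$. The main obstacle I anticipate is the first step: rigorously extracting the $\bar\partial+\partial_t$ operator from the bracket computation, since neither $\bar\partial$ nor $\partial_t$ is globally defined on $D^{1,0}$ separately, and care is needed to check that the terms which are not individually globally defined cancel, leaving the globally defined sum.
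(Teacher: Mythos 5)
Your treatment of the second equation is exactly the paper's: expand $d(\eta+\alpha)$ on $(Id-\omega)E$ by bilinearity and kill the terms $d\eta(V,W)$ and $d\eta(\omega V,\omega W)$ using $\restriction{d\eta}{E}=0$ and $\restriction{d\eta}{D^{1,0}}=0$ (your ``$d\eta$ is of type $(1,1)$'' argument is the same fact), leaving precisely $Q(\omega,\alpha)$. The difference lies in the first equation: the paper computes nothing there and simply invokes Duchamp--Kalka \cite{DK}, where the equivalence between involutivity of $(Id-\omega)E$ and the equation $(\bar\partial+\partial_t)\omega+\tfrac12[\omega,\omega]=0$ is the basic result of the deformation theory of transversely holomorphic foliations, whereas you propose to re-derive it by a bracket computation in foliated coordinates and explicitly leave that computation (including the point that only the sum $\bar\partial+\partial_t$, not $\bar\partial$ and $\partial_t$ separately, is globally defined) as the anticipated obstacle. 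Your outline is correct -- what you sketch is exactly the content of the cited reference -- but as written it is an unproved assertion standing in for the only nontrivial half of the lemma; either cite \cite{DK} as the paper does, or carry the computation through, in which case you must fix the convention for $[\omega,\omega]$, be careful whether you project $[(Id-\omega)V,(Id-\omega)W]$ onto $D^{1,0}$ along the deformed bundle $(Id-\omega)E$ or along $E$ (involutivity is vanishing of the component along the deformed bundle, and translating to the splitting $D^{1,0}\oplus E$ introduces the quadratic correction), and verify that the locally defined pieces assemble into the global operator $\bar\partial+\partial_t$. Your closing remark about the open conditions ($E+\bar E=T_{\mathbb C}S$ and positivity of the contact form) being preserved for small $(\omega,\alpha)$ is harmless and is implicitly built into the chart \eqref{Ecarte} being defined on a neighborhood; the paper does not comment on it.
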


\begin{proof}
The first equation is the integrability condition of a transversely holomorphic foliation, see \cite{DK}. For the second one, by Corollary \ref{Echar}, it is given by
$$
\restriction{d(\eta+\alpha)}{(Id-\omega)E}\equiv 0.
$$
Using bilinearity, $\restriction{d\eta}{E}\equiv 0$ and $\restriction{d\eta}{D^{1,0}}\equiv 0$, we immediately obtain the result. 
\end{proof}

The group acting is just $\text{Diff}^{l+1}(S)$ with chart \eqref{Gcarte} given by the exponential associated to a fixed real analytic riemannian metric. Action \eqref{action} is
\begin{equation}
\label{saction}
(\omega,\alpha)\cdot \phi(v)=\left (\omega\cdot\phi(v) ,\phi(v)^*(\eta+\alpha)-\eta\right )
\end{equation}
where $\omega\cdot\phi(v)$ is characterized by 
\begin{equation}
\label{saction2}
\phi(v)_*\{w-(\omega\cdot \phi(v))(w)\mid w\in E\}=\{w-\omega(w)\mid w\in E\}
\end{equation} 
We have now
\begin{lemma}
\label{differentielle}
The differential $L$ of {\rm \eqref{saction}} at $(E,\eta)$ is
$$
L(\chi,\omega,\alpha)=(\omega+(\bar\partial+\partial_t)\chi^{1,0}, \alpha+\mathcal L_\chi \eta).
$$
\end{lemma}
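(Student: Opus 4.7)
The plan is to compute $L$ as the first-order Taylor expansion of \eqref{saction} at $\chi=0$, $\omega=0$, $\alpha=0$, treating the two output components separately.

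For the second (contact-form) component $\phi(v)^*(\eta+\alpha)-\eta$, I would argue exactly as in Lemma \ref{Llemma}: via \eqref{DL}, the chart $\phi(s\chi)$ coincides with the time-$s$ flow of $\chi$ up to first order in $s$, so the Taylor expansion at $s=0$ yields the contribution $\alpha+\mathcal L_\chi\eta$ directly.

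For the first (subbundle) component $(s\omega)\cdot\phi(s\chi)$, the $\omega$-direction at $v=0$ is trivial since $s\omega\cdot e=s\omega$ by definition of the action, and contributes $\omega$. What remains is to linearize the map $v\mapsto 0\cdot\phi(v)\in\Gamma(E^*\otimes D^{1,0})$ at $v=0$ in direction $\chi$. The key observation is that, by \eqref{saction2}, $0\cdot\phi(v)$ is the unique section of $\mathrm{Hom}(E,D^{1,0})$ whose graph equals the subbundle $\phi(v)^{-1}_*E$; in other words, $0\cdot\phi(v)$ is precisely the Kodaira--Spencer parameterization of the deformed subbundle $\phi(v)^{-1}_*E$ near $E$. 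Its linearization at $v=0$ in direction $\chi$ is the standard formula
$$
w\in E \longmapsto [w,\chi]^{1,0} \in D^{1,0},
$$
where ${}^{1,0}$ denotes projection onto $D^{1,0}$ along $E$; this is the infinitesimal version of the transport of the graph of $0$ by $\phi(s\chi)^{-1}$ combined with \eqref{DL}.

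Finally I would decompose $\chi=\chi^{1,0}+\chi^E$ with $\chi^E\in E$. The involutivity $[E,E]\subset E$ at the base sasakian point gives $[w,\chi^E]\in E$ for every $w\in E$, so $[w,\chi]^{1,0}=[w,\chi^{1,0}]^{1,0}$. This map from $E$ to $D^{1,0}$ is exactly the image of $\chi^{1,0}\in\Gamma(D^{1,0})$ under the globally defined transverse operator $\bar\partial+\partial_t$ of the transversely holomorphic foliation $\mathcal F$, cf.\ \cite{DK}; it is the same operator whose vanishing condition appears in the integrability equation of Lemma \ref{integrable}. Adding the $\omega$ contribution gives $\omega+(\bar\partial+\partial_t)\chi^{1,0}$, as required. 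The only delicate step is this last identification of the Lie-bracket formula with $(\bar\partial+\partial_t)\chi^{1,0}$; but this is the foliated analogue of the classical Kodaira--Spencer computation for the diffeomorphism action on integrable complex operators, and is standard in the theory of transversely holomorphic foliations.
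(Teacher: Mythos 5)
Your proof is correct and takes essentially the same route as the paper: the second component is treated exactly as in Lemma \ref{Llemma}, and for the first component you carry out the graph-transport linearization (giving $w\mapsto[w,\chi]^{1,0}$, reduced to $[w,\chi^{1,0}]^{1,0}=\bigl((\bar\partial+\partial_t)\chi^{1,0}\bigr)(w)$ by involutivity of $E$), which is precisely the computation the paper simply cites to \cite{DK}. You thus supply details the paper omits, and your formula is consistent with the paper's conventions, e.g.\ it reproduces the symbol $v^E\otimes\chi^{1,0}$ found in Lemma \ref{elliptic}.
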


\begin{proof}
The first component is computed in \cite{DK}, and the second one in Lemma \ref{Llemma}. 
\end{proof}

Following the notations of section \ref{local section}, we define the operator 
\begin{equation}
\label{D}
\chi\in\Gamma^l(TS)\longmapsto P(\chi)=((\bar\partial+\partial_t)\chi^{1,0},\mathcal L_\chi\eta)\in T_{(E,\eta)}\mathcal E.
\end{equation}

\begin{lemma}
\label{elliptic}
The operator $P$ is an elliptic differential operator of order $1$ from $TS$ into $(\Omega^1(E)\otimes D^{1,0})\oplus T^*S$.
\end{lemma}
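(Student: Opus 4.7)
The plan is to verify ellipticity in the usual way: compute the principal symbol of $P$ at an arbitrary nonzero real covector and show it is injective. That $P$ is a first-order differential operator with smooth coefficients is immediate, since $\bar\partial+\partial_t$ is a globally defined first-order operator on sections of $D^{1,0}$ and the Lie derivative $\chi\mapsto\mathcal L_\chi\eta$ is visibly first order in $\chi$.

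For the first component, in local foliated coordinates $(z,\bar z,t)$ the operator $\bar\partial+\partial_t$ differentiates only along directions tangent to $E=D^{0,1}\oplus\mathbb C\xi$. Hence its principal symbol at $\xi^*\in T^*_xS$ is the map
$$
\chi^{1,0}\longmapsto (\xi^*|_E)\otimes\chi^{1,0}\in E^*_x\otimes D^{1,0}_x.
$$
For the second component, Cartan's formula $\mathcal L_\chi\eta=i_\chi d\eta+d(\eta(\chi))$ shows that $i_\chi d\eta$ is order zero in $\chi$, so only the exterior derivative contributes to the principal symbol, yielding the rank-one map $\chi\mapsto\eta(\chi)\,\xi^*$. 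Altogether,
$$
\sigma_P(\xi^*)(\chi)=\bigl((\xi^*|_E)\otimes\chi^{1,0},\ \eta(\chi)\,\xi^*\bigr).
$$

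Next I would check injectivity for an arbitrary $\xi^*\neq 0$. Suppose $\sigma_P(\xi^*)(\chi)=0$. From the second component and $\xi^*\neq 0$ we get $\eta(\chi)=0$, so the $\mathbb C\xi$-component of $\chi$ vanishes and $\chi\in D$. From the first component we then get $\xi^*|_E\otimes\chi^{1,0}=0$, leaving two cases. If $\chi^{1,0}=0$, then reality of $\chi$ forces $\chi^{0,1}=\overline{\chi^{1,0}}=0$, hence $\chi=0$. If instead $\xi^*|_E=0$, then since $\xi^*$ is real we also have $\xi^*|_{\bar E}=0$, so $\xi^*$ annihilates $E+\bar E$.

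The one point where the sasakian structure is actually used is precisely here: condition $(1)$ of Corollary \ref{Echar}, namely $E+\bar E=T_{\mathbb C}S$, translates to $E^\perp\cap\bar E^\perp=0$ in $T^*_{\mathbb C}S$, which contradicts $\xi^*\neq 0$. This step is the only real obstacle — the rest of the argument is bookkeeping about symbols — and it explains why we needed to package the contact and CR data so that $E$ and $\bar E$ together span everything. Once this case is eliminated, $\sigma_P(\xi^*)$ is injective for every nonzero $\xi^*$ and $P$ is elliptic of order $1$.
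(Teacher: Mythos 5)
Your proof is correct and follows essentially the same route as the paper: both compute the principal symbol as $\chi\mapsto(v|_E\otimes\chi^{1,0},\ \eta(\chi)\,v)$ and deduce injectivity from reality of the covector together with $E+\bar E=T_{\mathbb C}S$. The only difference is the order of the bookkeeping (you eliminate the $\xi$-component via the second slot first, the paper kills $\chi^{1,0}$ first by noting $v|_E\neq 0$), which is immaterial.
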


Hence (H2diff) and (H3diff) are fulfilled.

\begin{proof}
From its definition \eqref{D}, $P$ is clearly a differential operator from $TS$ into $(\Omega^1(E)\otimes D^{1,0})\oplus T^*S$. Let us compute its symbol $\sigma$. Let $x\in S$ and $v\in T^*_xS\setminus\{0\}$. Choose local foliated coordinates $(z,\bar z,t)$, where we assume that $\xi=\partial/\partial t$. Then a direct computation shows that
\begin{equation}
\label{symbol}
\sigma_{(x,v)}(\chi)=(v^E\otimes\chi^{1,0},(i_\chi\eta) v).
\end{equation}
Assume now that $\sigma_{(x,v)}(\chi)$ is zero. Since $v$ is real and not zero, $v^E$ is not zero, so $\chi^{1,0}$ must be zero. This implies that $\chi^{0,1}$ is also zero, but it is not enough to conclude that $\chi$ is zero. 
\vspace{5pt}\\
However, looking at the second component of \eqref{symbol}, we have
$$
i_\chi\eta=\chi_\xi (i_\xi \eta)=\chi_\xi=0.
$$
This is exactly what was missing to conclude that $\chi$ is zero. Hence $\sigma_{(x,v)}(\chi)$ is injective and $P$ is elliptic.
\end{proof}

Finally, the automorphism group of a sasakian manifold is known to be a finite-dimensional Lie group and we can use the time $1$ flow as chart $\psi$ in (H4). We conclude from Proposition \ref{Pelliptic} and Lemma \ref{integrable} that, setting
\begin{equation}
\label{Kurgeneral}
K^l:=\{(\omega,\alpha)\mid P^*(\omega,\alpha)=(\bar\partial+\partial_t)\omega+\dfrac{1}{2}[\omega,\omega]=Q(\omega,\alpha)=0\}
\end{equation}
with $P^*$ the composition of the formal adjoint to $P$ and of chart \eqref{Ecarte}, we have

\begin{theorem}
\label{mainsasakian}
The infinite-dimensional analytic set {\rm \eqref{Kurgeneral}} is a Kuranishi type space for sasakian structures of class $W^l$ at $(E,\eta)$.
\end{theorem}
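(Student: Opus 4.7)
The plan is to apply Proposition \ref{Pelliptic} directly, since all the groundwork has been laid in Lemmas \ref{integrable}, \ref{differentielle}, and \ref{elliptic}. The structure of the argument is: verify (H2diff), (H3diff), (H4), (H5$l$), obtain from Proposition \ref{Pelliptic} a Kuranishi type space, and then identify it with the explicit set \eqref{Kurgeneral}.

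First, (H2diff) is the content of Lemma \ref{differentielle}, which writes the differential of the action as $L(\chi,\omega,\alpha)=(\omega,\alpha)+P\chi$ with $P$ the first-order differential operator \eqref{D}; and (H3diff) is exactly Lemma \ref{elliptic}. Hence the first part of Proposition \ref{Pelliptic} applies and yields a local section $K^l$ for the $\text{Diff}^{l+1}(S)$-action in the chart \eqref{Ecarte}. By Remark \ref{star}, a closed complement $F^\perp$ to $\text{Im}\,P$ in $T_{(E,\eta)}\mathcal E$ is given by the kernel of the formal $L^2$-adjoint $P^*$ (using the sasakian metric $g$ and its induced pairings on $(E^*\otimes D^{1,0})\oplus T^*S$). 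Consequently, one may take the tangent slice $\tilde K$ to be $\{(\omega,\alpha)\in U\mid P^*(\omega,\alpha)=0\}$. Intersecting $\tilde K$ with $\mathcal I$ and invoking Lemma \ref{integrable}, the local section $K^l=\tilde K\cap\mathcal I$ is precisely the analytic set \eqref{Kurgeneral}.

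Next I would verify (H4). By Corollary \ref{Echar}, the isotropy group $G_{(E,\eta)}$ consists of the $W^{l+1}$-diffeomorphisms of $S$ preserving both $E$ and $\eta$, which are exactly the automorphisms of the sasakian structure $(S,g,\xi,\eta,\Phi)$. By classical results (see \cite{BoyerBook}), this automorphism group is a finite-dimensional Lie group whose elements are $C^\infty$ (indeed, they preserve the sasakian metric $g$ via \eqref{geta}, hence are isometries and send geodesics to geodesics, which forces smoothness, exactly as in the argument of Lemma \ref{H89}). A local chart $\psi$ for $G_{(E,\eta)}$ at $e$ is then given by the time-one flow of its Lie algebra, which is smooth when read in the chart \eqref{Gcarte}; this gives (H4). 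For (H5$l$), one uses that composition of $W^{l+1}$-diffeomorphisms $(f,h)\mapsto f\circ h$, though only continuous in general, becomes $C^1$ when $f$ ranges in a submanifold of $C^\infty$-points; see the proof of Proposition \ref{Pelliptic}. Since $G_{(E,\eta)}\cap W$ consists of $C^\infty$ diffeomorphisms, (H5$l$) holds, and the differential at $(e,e)$ of $\mu$ is $(\xi,\eta)\mapsto \xi+\eta$ as required. Proposition \ref{Pelliptic} then promotes $K^l$ to a Kuranishi type space.

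The main obstacle I anticipate is the smoothness of isotropy elements, on which (H5$l$) rests. One must be careful: a priori, elements of $G_{(E,\eta)}$ are only of Sobolev class $W^{l+1}$, yet we need them to be $C^\infty$ so that composition with them is $C^1$. The argument is analogous to Lemma \ref{H89}: if $f$ preserves $E$ and $\eta$, then $f$ preserves the CR distribution $D=\text{Ker}\,\eta$, the Reeb field $\xi$ via \eqref{Reebcontact}, the operator $\Phi$ via \eqref{Phi} (since $f^*D^{0,1}=f^*E\cap\text{Ker}(f^*\eta)=E\cap\text{Ker}\,\eta=D^{0,1}$), and finally the metric $g$ by \eqref{geta}. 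Being a $W^{l+1}$-isometry of a $C^\infty$ Riemannian metric, $f$ is forced to be smooth by a standard bootstrap. Everything else is a routine check: closedness of $F=\text{Im}\,P$ follows from ellipticity \cite[\S3.9]{Narasimhan}, and closedness of $E=\text{Ker}\,L|_{T\times\{0\}}$ in $T$ together with the existence of a closed complement is automatic in the Hilbert setting.
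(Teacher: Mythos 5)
Your proposal is correct and follows essentially the same route as the paper: (H2diff)/(H3diff) from Lemmas \ref{differentielle} and \ref{elliptic}, (H4) via the finite-dimensional sasakian automorphism group with the time-one flow as chart $\psi$, and Proposition \ref{Pelliptic} together with Lemma \ref{integrable} and Remark \ref{star} to identify the slice with \eqref{Kurgeneral}. Your extra explicit check that isotropy elements are $C^\infty$ (hence (H5$l$)) is precisely what the proof of Proposition \ref{Pelliptic} already supplies, so it is a welcome but redundant elaboration rather than a different argument.
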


Moreover, each structure $(E,\eta)\in\mathcal E$ encodes a unique riemannian metric $g_{(E,\eta)}$ on $S$ as follows. Look at the second formula of \eqref{geta}. Starting from $(E,\eta)$ sasakian, it defines a riemannian metric. However, starting from $(E,\eta)$ only in $\mathcal E$, it does not give a symmetric expression. We claim that its symmetrization, that is
\begin{equation}
\label{gEeta}
g_{(E,\eta)}(V,W)=\dfrac{1}{4}(d\eta(V,\Phi(W))+d\eta(\Phi(V),W))+\eta(V)\eta(W)
\end{equation}
is a riemannian metric. Indeed it is definite positive on $E$ because of \eqref{positivecontact} and then on the whole $TS$ because of \eqref{Reebcontact}.
\vspace{5pt}\\
 This induces a unique riemannian 
metric
on the bundle $E^*\otimes D^{1,0}\oplus T^*S$, still denoted by $g_{(E,\eta)}$. By integrating over $S$, we obtain a scalar product $h_{(E,\eta)}$ on the space 
of global sections $\Gamma^l((E^*\otimes D^{1,0})\oplus T^*S)$. The collection $(h_\alpha)$ is a weak riemannian metric on $\mathcal E$, from which one deduces a strong riemannian metric. 
It is obviously invariant under the action of group of diffeomorphisms of $S$, and it is 
smooth by arguing as in the proof of Lemma \ref{H89}. Hypotheses (H6) and (H7) are thus satisfied. Hypothesis (H8) is only satisfied on the second component, but this is enough for the proof of Proposition \ref{Cinfini} to be applied (see \eqref{saction}). As an application of Theorem \ref{mainsmooth}, we thus have

\begin{corollary}
\label{mainsasakieninfini}
 Let $K^\infty$ be the subset of $C^\infty$ points of $K^l$. Then it is a Kuranishi type space for $C^\infty$ structures.
\end{corollary}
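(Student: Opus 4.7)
The plan is to derive Corollary \ref{mainsasakieninfini} as a direct application of Theorem \ref{mainsmooth} to the setting already assembled in Section \ref{general}. Theorem \ref{mainsasakian} has given us the Kuranishi type space $K^l$ in the Sobolev category by checking (H2diff), (H3diff), (H4$l$) and (H5$l$). To pass to the $C^\infty$ level it remains to verify the ``smoothness'' hypotheses (H6), (H7$l$) and (H8) (the last one up to the mild modification explained just before the statement).

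First I would verify (H6). Let $\phi(\chi)$ be a diffeomorphism of class $W^{l+1}$ in the chart \eqref{Gcarte} and assume that $(E,\eta)\cdot\phi(\chi)$ is of class $C^\infty$; by \eqref{saction} both $\phi(\chi)^*\eta-\eta$ and $\phi(\chi)_*E$ are smooth. Arguing exactly as in the proof of Lemma \ref{H89} for the $\eta$-component and using that $D^{0,1}=E\cap\mathrm{Ker}\,\eta$ is preserved, one deduces that $\phi(\chi)$ preserves the full sasakian data of the base point, so it sends geodesics of a fixed smooth riemannian metric onto geodesics and is therefore $C^\infty$. Since the exponential chart realises $\phi$ as a smooth function of $\chi$ and its inverse has the same regularity, (H6) follows.

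Next I would construct the invariant affine connection required by (H7$l$). The natural candidate is the Levi--Civita connection of the strong riemannian metric on $\mathcal E$ induced by \eqref{gEeta}: for each $(E,\eta)\in\mathcal E$, formula \eqref{gEeta} defines a riemannian metric on $S$, which is genuinely positive thanks to \eqref{positivecontact} and \eqref{Reebcontact}; this extends to a metric on $(E^{*}\otimes D^{1,0})\oplus T^{*}S$ and then, by $L^2$-integration over $S$ and jet-prolongation (as in \cite[p.~18--20]{Ebin}), to a strong smooth $G$-invariant riemannian metric $h_{(E,\eta)}$ on $\mathcal E^l$. The smoothness of $(E,\eta)\mapsto g_{(E,\eta)}$ follows because the construction is pointwise algebraic in the $1$-jet of $(E,\eta)$, exactly as in Lemma \ref{H89}. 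The Levi--Civita connection of $h$ is then smooth and invariant. Finally, (H8) holds on the $\eta$-component because the action there is pullback on $T^*S$, which is a natural bundle; the $E$-component is not a natural tensor, but inspection of \eqref{saction}--\eqref{saction2} shows that $\phi(\chi)$ acts on $E$ by the natural Grassmannian action induced by $d\phi(\chi)$, so the argument of Proposition \ref{Cinfini} (which only uses that pull-back commutes with the exponential map, i.e.\ with Lie derivatives) goes through unchanged, as already noted in the paragraph preceding the corollary.

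With (H2diff), (H3diff), (H4$l$), (H5$l$), (H6), (H7$l$) and the required version of (H8) all verified, statement (2) of Theorem \ref{mainsmooth} applies and yields that $K^\infty$, the set of $C^\infty$ points of $K^l$, is a Kuranishi type space for $C^\infty$ sasakian structures, proving the corollary. I expect the main technical obstacle to be the careful construction and smoothness verification of the strong invariant metric on $\mathcal E^l$, since the underlying metric $g_{(E,\eta)}$ has to be built by symmetrisation of \eqref{geta}, checked positive on all of $TS$, and shown to depend smoothly on the $1$-jet of $(E,\eta)$; the other verifications are essentially bookkeeping of what was done for the contact form alone in Subsection \ref{Kuretasasakien}.
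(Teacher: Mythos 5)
Your proposal is correct and follows essentially the same route as the paper: verify (H6) by the Lemma \ref{H89}-type geodesic argument, obtain (H7$l$) from the strong invariant metric built by symmetrizing \eqref{geta} into \eqref{gEeta} and integrating as in Ebin, observe that (H8) holding only on the $\eta$-component is enough for Proposition \ref{Cinfini}, and then invoke Theorem \ref{mainsmooth}. The only slip is the phrase that $\phi(\chi)$ ``preserves the full sasakian data of the base point'' --- it merely transforms it into smooth data, which is all the geodesic argument needs, so the proof stands as written.
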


Observe that the equations in \eqref{Kurgeneral} are cubic, and not quadratic as in the classical case of complex structures. Let us compute more precisely
the adjoint $P^*$. This is similar to the computation of \eqref{Padjointeta}. Write
\begin{equation}
 \label{Pcompletbis}
 (h,\chi)\in\Gamma^{l+1}(\mathbb R\times D^{1,0})\longmapsto P(h,\chi)=((\bar\partial +\partial_t)\chi,i_{\chi+\bar\chi}d\eta+dh)\in\mathcal E
\end{equation}
and defining
$$
\Gamma_0:=\left (\left (\restriction{P}{\{0\}\times \Gamma^{l+1} (D^{1,0})}\right )^{-1}(P(\Gamma^{l+1} (\mathbb R)\times\{0\}))\right )^\perp
$$
then \eqref{Pcompletbis} becomes
\begin{equation}
 \label{Pcompletter}
 (h,\chi)\in\Gamma^{l+1}(\mathbb R)\times \Gamma_0\longmapsto P(h,\chi)=((\bar\partial +\partial_t)\chi,i_{\chi+\bar\chi}d\eta\oplus dh)\in\mathcal E.
\end{equation}
and we have
\begin{lemma}
 The adjoint of \eqref{Pcompletter} is given by the formula
 \begin{equation}
 \label{Padjoint}
  P^*(\omega,\alpha)=(d^*\alpha,(\bar\partial+\partial_t)^*\omega-(i_{\alpha^\sharp}d\eta)^\sharp) 
 \end{equation}
\end{lemma}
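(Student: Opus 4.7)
The plan is a direct $L^2$-computation: unwind the pairing $\langle P(h,\chi),(\omega,\alpha)\rangle$ using the inner product induced by the sasakian metric $g$ (and its extension to tensors), split it into the three natural summands coming from the two components of $P$, integrate by parts where necessary, and then read off $P^*$ by comparing with $\langle (h,\chi),P^*(\omega,\alpha)\rangle$ on $\Gamma^{l+1}(\mathbb R)\times\Gamma_0$. Because $P^*$ is the \emph{formal} adjoint, we may do all computations on the ambient space $\Gamma^{l+1}(\mathbb R)\times\Gamma^{l+1}(D^{1,0})$ and drop terms that vanish after projection onto $\Gamma_0$.

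First I would handle the second component, $i_{\chi+\bar\chi}d\eta\oplus dh$, exactly as in the proof of Lemma~\ref{adjointeta}. The $dh$ part produces $\langle h,d^*\alpha\rangle$ by the standard definition of the codifferential; this accounts for the first entry of $P^*(\omega,\alpha)$. The $i_{\chi+\bar\chi}d\eta$ part is handled by applying the musical identities \eqref{diesedef}--\eqref{sharpproperty}: at each point,
$$
g\bigl(i_{\chi+\bar\chi}d\eta,\alpha\bigr)=d\eta(\chi+\bar\chi,\alpha^\sharp)=-\,g\bigl((i_{\alpha^\sharp}d\eta)^\sharp,\chi+\bar\chi\bigr),
$$
which integrates to $-\langle\chi+\bar\chi,(i_{\alpha^\sharp}d\eta)^\sharp\rangle$ and, after using that $\chi$ is of type $(1,0)$ and that the pairing on $\Gamma^{l+1}(D^{1,0})$ is induced by the real pairing on $TS$, contributes the term $-(i_{\alpha^\sharp}d\eta)^\sharp$ to the second entry of $P^*(\omega,\alpha)$.

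Next I would treat the first component, $(\bar\partial+\partial_t)\chi$, paired against $\omega\in\Gamma^l(E^*\otimes D^{1,0})$. Here the formal adjoint $(\bar\partial+\partial_t)^*$ is already a well-defined differential operator from $\Gamma^l(E^*\otimes D^{1,0})$ to $\Gamma^{l+1}(D^{1,0})$, obtained in the standard way by Green's formula using the metric $g$ extended to $E^*\otimes D^{1,0}$; this is the content of the global nature of $\bar\partial+\partial_t$ established in \cite{DK}. This contributes $(\bar\partial+\partial_t)^*\omega$ to the second entry. Summing the two contributions to the second entry gives $(\bar\partial+\partial_t)^*\omega-(i_{\alpha^\sharp}d\eta)^\sharp$, exactly as claimed in \eqref{Padjoint}.

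The main technical point to be careful about is the bookkeeping of real versus complex pairings: $\chi$ lives in the complex bundle $D^{1,0}$ while $i_{\alpha^\sharp}d\eta$ is a real $1$-form, so one must justify that $\langle \chi+\bar\chi,V\rangle_{\mathbb R}$ for $V$ real translates to the correct (real part of the) Hermitian pairing on $D^{1,0}$, and that the projection onto $\Gamma_0$ does not alter the formula since the discarded directions are exactly those for which $(\bar\partial+\partial_t)\chi=0$ and $i_{\chi+\bar\chi}d\eta$ already lies in the image of $d$. Once these conventions are fixed, combining the three contributions yields \eqref{Padjoint}.
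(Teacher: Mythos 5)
Your proposal is correct and follows essentially the same route as the paper: the paper's proof is exactly the direct $L^2$ computation of Lemma \ref{adjointeta} (pairing against $(h,\chi)$, using \eqref{diesedef}--\eqref{sharpproperty} for the $i_{\chi+\bar\chi}d\eta$ term and the codifferential for $dh$), augmented by the formal adjoint $(\bar\partial+\partial_t)^*$ taken with respect to the $\xi$-invariant metric $g$. Your sign bookkeeping and your treatment of the restriction to $\Gamma_0$ match the paper's level of detail, so there is nothing to correct.
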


The proof is a direct computation and is completely similar to that of Lemma \ref{adjointeta}. Observe that $g$ being invariant by the flow of $\xi$, we may use the Hodge operator associated to $g$ to define both $d^*$ and $(\bar\partial+\partial_t)^*$.
\vspace{5pt}\\
As in Proposition \ref{sasakienH0=0}, we have

\begin{proposition}
Assume that the group of basic infinitesimal automorphisms $H^0(S,\Theta)$ is zero. Then $K^l$ and $K^\infty$ is a local moduli space.
\end{proposition}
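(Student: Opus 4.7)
The plan is to mirror the proof of Proposition~\ref{sasakienH0=0} and verify the stronger hypotheses (H4$'$) and (H5$'$), so that Proposition~\ref{LMS} applies to the Sobolev case $K^l$, and then Theorem~\ref{mainsmooth}(3) takes care of $K^\infty$. All other hypotheses (H1)--(H3) and (H6)--(H8) have already been established in the excerpt for this general setting, so the only work is in dealing with the isotropy.

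First I would identify the isotropy Lie algebra at the base. An element $\chi$ of $\text{Lie}(G_{(E,\eta)})$ is a real vector field with $[\chi,E]\subset E$ and $\mathcal L_\chi\eta=0$. Decomposing $\chi=h\xi\oplus\chi_N$ with $\chi_N$ of type $\mathcal X_N$, the $E$-invariance and the identification $H^0(S,\Theta)\cong\mathcal X_N$ from the remark after Proposition~\ref{sasakienH0=0} together force $\chi_N=0$. Then $\mathcal L_{h\xi}\eta=dh=0$, so $h$ is constant and the isotropy algebra reduces to $\mathbb R\cdot\xi$. Next, for a nearby structure $(E',\eta')\in K^l$, the same analysis gives an isotropy algebra of the form $\mathbb R\cdot\xi'\oplus\mathcal X_N^{(E',\eta')}$, where $\xi'$ is the Reeb field of $\eta'$. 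The argument that $\mathcal X_N^{(E',\eta')}=0$ for $(E',\eta')$ close to $(E,\eta)$ is the upper semi-continuity of $\ker$ for the elliptic operator $P$ of Lemma~\ref{elliptic} as the structure varies continuously in $K^l$: since $\mathcal X_N$ is identified with a summand of $\ker P$ consisting of $C^\infty$ sections, its dimension cannot jump up under small perturbation.

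Once the isotropy dimension is known to be constantly equal to $1$ on a neighborhood of $(E,\eta)$ in $K^l$, I would construct $\psi$ as follows. The assignment $\eta'\mapsto\xi'$ is smooth by \eqref{Reebcontact}, so setting
\[
\psi\bigl((E',\eta'),\lambda\xi\bigr) := \text{(time-}1\text{ flow of }\lambda\xi'\bigr)
\]
yields a smooth map on $\tilde K\times (E\cap V)\to G$ with $\psi((E',\eta'),0)\equiv e$ and image $G_{(E',\eta')}\cap W$. This verifies (H4$'$). For (H5$'$), the composition $(\xi,h)\mapsto\psi((E,\eta),\xi)\circ h$ is $C^1$ with the correct differential $(\xi,\eta)\mapsto\xi+\eta$ because $\xi'$ is $C^\infty$ (elliptic regularity from $P$ having smooth coefficients), so the composition map \eqref{cm} is $C^1$ on the submanifold consisting of $C^\infty$ points, exactly as in the proof of Proposition~\ref{Pelliptic}. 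This gives (H4$'$) and (H5$'$) for $K^l$, so Proposition~\ref{LMS} produces a local moduli space. The transition to $K^\infty$ follows from Theorem~\ref{mainsmooth}(3), using (H6)--(H8) which were already established just before the statement of Corollary~\ref{mainsasakieninfini}.

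The main obstacle is the semi-continuity step: in contrast to the situation in Proposition~\ref{sasakienH0=0}, where $E$ was fixed and the Reeb field stayed automatically available, here both $E$ and $\eta$ move in an infinite-dimensional space, so one must rule out that a family of automorphisms of the deformed structures could accumulate to something outside $\mathbb R\cdot\xi$. The ellipticity of $P$ and the finite-dimensionality of $\mathcal X_N$ provide the required control, but it is the one place where the argument genuinely requires more than a cosmetic modification of Proposition~\ref{sasakienH0=0}.
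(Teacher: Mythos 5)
Your overall scaffolding matches the paper's: reduce everything to showing that the isotropy algebra of every structure $(E',\eta')$ in $K^l$ near the base point is exactly $\mathbb R\xi'$, take $\psi((E',\eta'),\lambda\xi)$ to be the time-one flow of $\lambda\xi'$, verify (H4') and (H5'), and conclude via Proposition \ref{LMS} and Theorem \ref{mainsmooth}. Where you diverge is the key input: the paper simply invokes the semi-continuity theorems of \cite{DK} to get $H^0(S',\Theta')=0$ for all nearby structures and then repeats the computation of Proposition \ref{sasakienH0=0}, whereas you try to extract the same conclusion from upper semi-continuity of the kernel of the elliptic operator $P$ of Lemma \ref{elliptic}.

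That step, as written, contains a genuine error. You claim that $\mathcal X_N\cong H^0(S,\Theta)$ is ``a summand of $\ker P$'', and that the isotropy algebra of a nearby structure is $\mathbb R\xi'\oplus\mathcal X_N^{(E',\eta')}$. Neither is true: $\ker P_{(E',\eta')}$ is the full sasakian isotropy algebra, and writing an element as $\chi=h\xi'\oplus\chi_N$ with $\chi_N\in\mathcal X_N^{(E',\eta')}$, membership in the kernel imposes the extra condition $dh+i_{\chi_N}d\eta'=0$, i.e.\ $i_{\chi_N}d\eta'$ must be exact. So $\ker P_{(E',\eta')}$ is an extension of the subspace of ``Hamiltonian'' holomorphic basic fields by $\mathbb R\xi'$, and its dimension does not control $\dim H^0(S',\Theta')$; your deduction that $\mathcal X_N^{(E',\eta')}=0$ nearby therefore does not follow from semi-continuity of $\ker P$. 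What rescues the remainder of your argument is that you only ever use the weaker statement that the nearby isotropy algebra equals $\mathbb R\xi'$, and that statement \emph{does} follow from a correctly formulated upper semi-continuity of $\dim\ker P_{(E',\eta')}$: at the base this kernel is $\mathbb R\xi$ under the hypothesis, and $\xi'$ always lies in $\ker P_{(E',\eta')}$. To make that rigorous you must still treat $P_{(E',\eta')}$ as a continuous family of elliptic operators, identifying the target bundles (which vary with $(E',\eta')$) and handling coefficients that are only of Sobolev class, since points of $K^l$ need not be smooth. So either repair your step by arguing on $\ker P$ directly and dropping the false claim about $\mathcal X_N$, or do as the paper does and quote the semi-continuity of $H^0(S,\Theta)$ from \cite{DK}, which bypasses these issues.
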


\begin{proof}
If $H^0(S,\Theta)$ is zero, then by the semi-continuity theorems of \cite{DK}, it is zero also for $S'$ close to $S$. Hence, we may apply the proof of Proposition \ref{sasakienH0=0} and obtain that the automorphism group of any $S'$ in $K^l$ is equal to $\mathbb R \xi'$, and that (H4') and (H5') are satisfied. Theorem \ref{main} gives the result.
\end{proof}

\subsection{Comparison of the different deformation spaces} 

Let $S$ be a sasakian manifold. As Proposition \ref{charsasakien} suggests, it depends only on two structures: the transversely holomorphic foliation encoded in the subbundle $E$, and the contact form $\eta$.  We want to compare the associated three deformation spaces:

\begin{enumerate}
\item The Kuranishi type space $K^\infty_\eta$ of $\eta$-deformations defined in \eqref{Kspace}.

\item The Kuranishi type space $K^\infty$ of general deformations given in \eqref{Kurgeneral}.

\item The versal space $K_E$ of the transversely holomorphic foliation $(S,E)$.
\end{enumerate}

The space $K_E$ was obtained in \cite{GHS}. 
It is finite-dimensional and contains only smooth structures (so we drop the exponent since it is not relevant here).
\vspace{5pt}\\
Since $K_E$ is versal, there is a natural analytic map $\pi$ from $K^\infty$ to $K_E$ fixing $0$.
\vspace{5pt}\\
Observe that the set
$$
K_0^\infty:=\{(0,\alpha)\in K^\infty\}
$$
is exactly $K_\eta^\infty$ (up to shrinking). Hence we have a natural inclusion of $K_\eta^\infty$ into $K^\infty$. Of course, $K_0^\infty$ is included in $\pi^{-1}(\{0\})$, but there is no reason for this inclusion to be an equality in general. Nevertheless,
we have

\begin{proposition}
If $K^\infty$ is a local moduli space, then, up to shrinking, the central fiber $\pi^{-1}(\{0\})$ is equal to $K^\infty_\eta$.
\end{proposition}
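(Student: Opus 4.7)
The inclusion $K^\infty_\eta \subseteq \pi^{-1}(\{0\})$ is immediate from the definitions: a point $(0,\alpha) \in K^\infty_\eta$ has associated foliation $E_0 = E$, which classifies to $0 \in K_E$. The content lies in the reverse inclusion, which I would approach as follows.

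Take $(\omega,\alpha) \in K^\infty \cap \pi^{-1}(\{0\})$, assumed sufficiently close to $(0,0)$. By versality of $K_E$, the equality $\pi(\omega,\alpha)=0$ forces the transversely holomorphic foliation $E_\omega=(Id-\omega)E$ to be isomorphic to $E$ via a diffeomorphism $f$ close to the identity. Translating this through \eqref{saction}--\eqref{saction2} yields $(\omega,\alpha)\cdot f^{-1} = (0,\alpha')$ for a small $1$-form $\alpha'$. $G$-invariance of $\mathcal{I}$ gives $(0,\alpha') \in \mathcal{I}$, and by Corollary~\ref{alpha} this is exactly the condition $\alpha' \in \mathcal{I}_\eta$ in the sense of \eqref{Icontact}.

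I would then apply the retraction $\Xi_\eta$ of the $\eta$-deformation problem: there exists $g$ in the $E$-preserving group $G_\eta$, close to the identity, with $\alpha'' := \Xi_\eta(\alpha') = \alpha'\cdot g^{-1} \in K^\infty_\eta$. Since $g$ preserves $E$, acting by $g^{-1}$ fixes the first coordinate at $0$; moreover, comparing \eqref{Kspace} and \eqref{Padjointeta} with \eqref{Kurgeneral} and \eqref{Padjoint}, the defining equations of $K^\infty$ on the slice $\{\omega=0\}$ reduce to those of $K^\infty_\eta$ (this is the identification $K^\infty_\eta \cong K_0^\infty$ already used in the excerpt). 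Hence $(0,\alpha'') \in K^\infty$.

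Combining both steps gives the relation $(\omega,\alpha)\cdot h = (0,\alpha'')$ with $h := f^{-1}\circ g^{-1}$ close to $e$. For $(\omega,\alpha)$ in a sufficiently small neighborhood of $(0,0)$, both $f$ and $g$ (and hence $h$) fall into the distinguished neighborhood $W$ provided by the local moduli space hypothesis. Definition~\ref{lms} then forces $(\omega,\alpha)\cdot h = (\omega,\alpha)$, so $(\omega,\alpha) = (0,\alpha'')$, and in particular $\omega = 0$. The main delicate point in executing this plan is controlling the size of $f$ (produced by versality) and $g$ (produced by $\Xi_\eta$) uniformly in $(\omega,\alpha)$ so that the local moduli space property can be invoked; this is precisely what the ``up to shrinking'' clause in the statement permits.
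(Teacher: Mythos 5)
Your argument follows the same route as the paper's own proof: first trivialize the foliation component by a diffeomorphism close to the identity coming from the versality of $K_E$, then move the resulting point $(0,\alpha')$ into the slice $K_0^\infty=K^\infty_\eta$ by an $E$-preserving diffeomorphism close to the identity, and finally apply the local moduli space property (Definition \ref{lms}) to the composite, with the ``up to shrinking'' clause providing exactly the uniform size control that the paper encodes in condition \eqref{assum}. So the proposal is correct and essentially identical in structure to the paper's proof.
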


\begin{proof}
Call $\mathcal I^\infty$ (respectively $\mathcal I_\eta^\infty$ and $\mathcal I_E^\infty$) the set of $C^\infty$ sasakian structures (respectively $C^\infty$ sasakian structures with fixed $E$ and $C^\infty$ sasakian structures with fixed $\eta$). Using the natural local encodings of these structures (see \eqref{Icontact} and \eqref{Ecarte}), we have a natural projection map
\begin{equation}
\label{PI}
(\omega,\alpha)\in \mathcal I^\infty\longmapsto \omega\in\mathcal I_E^\infty
\end{equation}
and isomorphisms onto their image
\begin{equation}
\label{iso1}
(\chi,\omega,\alpha)\in V\times K^\infty\longmapsto (\omega,\alpha)\cdot \phi(\chi)\in W^\infty\subset \mathcal I^\infty
\end{equation}
and 
\begin{equation}
\label{iso2}
(\chi,0,\alpha)\in V'\times K_0^\infty\longmapsto (0,\alpha)\cdot \phi'(\chi)\in 
(W')^\infty\subset \mathcal I_\eta^\infty .
\end{equation}
Be careful that $V$ is a neighborhood of $0$ in the Lie algebra of smooth vector fields of $S$, whereas $V'$ is a neighborhood of $0$ in the Lie algebra $\mathcal X$ defined in (\ref{vectTH}). Also the maps $\phi$ and $\phi'$ are not the same, cf. Lemma \ref{Banach}. Finally, we have an isomorphism
\begin{equation}
\label{iso3}
(\chi,\alpha)\in V''\times K_E\longmapsto \omega\cdot \phi(\chi)\in (W'')^\infty\subset \mathcal I_E^\infty 
\end{equation}
where $V''$ is a neighborhood of $0$ in the Lie algebra of smooth vector fields of $S$. We assume, restricting the Kuranishi spaces if necessary, that the image
\begin{equation}
\label{assum}
\{\phi'(\chi')\circ\phi(\chi)\mid \chi\in V'',\ \chi'\in V'\}
\end{equation}
is included in $\phi(V)$.
\vspace{5pt}\\
Now, let $(\omega,\alpha)\in\pi^{-1}(0)$. By \eqref{iso3}, that means that there exists some $\chi\in V''$ such that 
$$
\omega\cdot \phi(\chi)=0.
$$
Set
$$
(0,\alpha_0):=(\omega,\alpha)\cdot \phi(\chi).
$$
Using \eqref{iso2}, we know that there exists $\chi'\in V'$ such that $(0,\alpha_0)\cdot \phi'(\chi')$ belongs to $K_0^\infty$. Hence,
$$
(0,\alpha_1):=(0,\alpha_0)\cdot \phi'(\chi')=(\omega,\alpha)\cdot (\phi'(\chi')\circ\phi(\chi))
$$
so $(0,\alpha_1)$ and $(\omega, \alpha)$ both belong to $K^\infty$ and represent the same sasakian structure. Moreover, they belong to the same local orbit of $\text{Diff}(S)$ in $W^\infty$. Now condition \eqref{assum} associated to \eqref{iso1} and the local moduli space assumption shows that
$$
(0,\alpha_1)=(\omega,\alpha).
$$
As $(0,\alpha_1)$ belongs to $K_0^\infty$, this implies 
$$
\pi^{-1}(0)\subset K_0^\infty.
$$
Since we already noticed that the other inclusion is clear, we are done.
\end{proof}

\begin{remark}
The map $\pi$ is not surjective. Indeed, consider the vector field
$$
\chi_\lambda =z\dfrac{\partial}{\partial z}+\lambda w\dfrac{\partial}{\partial w}
$$
in $\mathbb C^2$. For $\lambda\in \mathbb C\setminus (-\infty,0]$, the flow of $\chi_\lambda$ is transverse to the unit sphere $\mathbb S^3$ and induces a transversely holomorphic flow on it. It is known however that there exist a sasakian metric associated to $\chi_\lambda$ if and only if the flow is riemannian, and this happens exactly 
when $\lambda$ is real 
(see \cite{BoyerBook}).
\end{remark}

\subsection{Deformations of Sasaki-Einstein manifolds}
\label{Sasaki-Einstein}

One of the advantages of our setting is that, given a Kuranishi type space $K$ for a certain class of structures, we can easily deduce a Kuranishi type space for more specific structures. It is only a matter of adding integrability conditions both in the definition of $\mathcal I$ and of $K$.\vspace{5pt}\\
In this subsection, we play this game with Sasaki-Einstein manifolds viewed as special sasakian manifolds.\vspace{5pt}\\
Recall that a sasakian manifold $(S, E,\eta)$ is {\it Sasaki-Einstein} if its sasakian metric $g$ satisfies \cite[\S 1.4]{Sparks}
$$
\text{Ric}_g=(\dim S -1)g.
$$
Starting with $(S,E,\eta)$ Sasaki-Einstein, we immediately obtain from \ref{KuranishiEfixed}, \ref{KuranishiEbisfixed} and \ref{mainsasakian} the following statement.\vspace{5pt}\\
Let 
\begin{equation}
\label{KSE}
K^{SE}=\{(\omega,\alpha)\in K^\infty\mid \text{Ric}_g=(\dim S -1)g\}
\end{equation}

\noindent and define similarly $K^{SE}_\eta$ and $(K')^{SE}_\eta$ from $K^\infty_\eta$ and $(K')^{\infty}$. With these notations,

\begin{corollary}
\label{mainSE}
We have
\begin{enumerate}
\item The space $K^{SE}$ is a Kuranishi type space for smooth Sasaki-Einstein manifolds close to $(S,E,\eta)$. 
\item The space $K^{SE}_\eta$ is a Kuranishi type space for smooth Sasaki-Einstein manifolds close to $(S,E,\eta)$ with the transversely holomorphic structure $E$ fixed. 
\item The space $(K')^{SE}_\eta$ is a Kuranishi type space for smooth Sasaki-Eins\-tein manifolds close to $(S,E,\eta)$ with the transversely holomorphic 
structure $E$  and the Reeb vector field $\xi$ fixed. 
\end{enumerate}
\end{corollary}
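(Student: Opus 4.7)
The plan is to observe that the Sasaki-Einstein condition cuts out a closed, $G$-invariant subset of the integrable locus in each of the three situations, so that the slice property and the minimality condition (MC) are inherited automatically from the Kuranishi type spaces $K^\infty$, $K^\infty_\eta$ and $(K')^\infty_\eta$ already constructed. In particular, I do not need to redo the Sobolev-to-smooth passage of Section \ref{smoothsobolev} or the ellipticity analysis of Section \ref{general}: everything reduces to restricting the previous isomorphisms to a closed $G$-invariant subset of $\mathcal I$.

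First I would check the two properties of the set
\[
\mathcal I^{SE}:=\{(E,\eta)\in\mathcal I\mid \mathrm{Ric}_{g_{(E,\eta)}}=(\dim S-1)\,g_{(E,\eta)}\}.
\]
Closedness inside $\mathcal I$ follows from the explicit formula \eqref{gEeta}: the assignment $(E,\eta)\mapsto g_{(E,\eta)}$ is smooth as a map of sections, hence so is $(E,\eta)\mapsto\mathrm{Ric}_{g_{(E,\eta)}}-(\dim S-1)g_{(E,\eta)}$, and $\mathcal I^{SE}$ is its zero locus. Invariance under $G$ is immediate: the $G$-action is by pullback, so $g_{(E,\eta)\cdot f}=f^*g_{(E,\eta)}$, and the Ricci equation is diffeomorphism invariant. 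With these two facts in hand, I would set $K^{SE}:=K^\infty\cap\mathcal I^{SE}$ and use Corollary \ref{mainsasakieninfini} together with \eqref{retraction}: since the fibers of the retraction $\Xi$ are contained in $G$-orbits and $\mathcal I^{SE}$ is $G$-invariant, the isomorphism \eqref{Kuranishi} restricts to an isomorphism from $E^\perp\cap V\times K^{SE}$ onto $U\cap\mathcal I^{SE}\cap\mathcal I^\infty$, which gives the desired slice property.

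For the minimality condition, I would argue that (MC2) is inherited: if $f\in W$ satisfies $(E,\eta)\cdot f\in K^{SE}$, then a fortiori $(E,\eta)\cdot f\in K^\infty$, so Proposition \ref{MCproof} applied to $K^\infty$ forces $(E,\eta)\cdot f=(E,\eta)$. This establishes assertion (1). Assertions (2) and (3) follow by repeating the same three-step argument verbatim, starting from $K^\infty_\eta$ of Corollary \ref{KuranishiEfixed} and from $(K')^\infty_\eta$ of Corollary \ref{KuranishiEbisfixed} respectively, and replacing $G$ by the subgroup of diffeomorphisms preserving $E$, respectively $E$ and $\xi$; neither the closedness of $\mathcal I^{SE}$ nor the inheritance of (MC2) depends on which subgroup of the full diffeomorphism group is acting.

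I do not expect any substantive obstacle, since the heavy analytic work --- existence of the $G$-invariant strong Hilbert metric, ellipticity of $P$, the smoothing argument via Proposition \ref{Cinfini} --- has already been done in constructing the ambient Kuranishi type spaces. The only mild point of care is to use the symmetrized formula \eqref{gEeta}, which defines $g_{(E,\eta)}$ as a genuine Riemannian metric on all of a neighborhood in $\mathcal E$ (not merely on $\mathcal I$), so that $\mathcal I^{SE}$ is truly closed inside $\mathcal I$ and not merely inside some smaller subset.
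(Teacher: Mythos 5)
Your proposal is correct and follows essentially the same route as the paper, which offers no detailed proof but presents the corollary as immediate on the principle that the Einstein equation is just an additional integrability condition added to both $\mathcal I$ and $K$ (closed and diffeomorphism-invariant, hence compatible with the slice isomorphism \eqref{Kuranishi} and with the minimality condition (MC)). Your write-up merely makes explicit the closedness, $G$-invariance, restriction of the slice, and inheritance of (MC2) that the paper leaves implicit.
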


We observe that, following \cite{NS}, the space $(K')^{SE}_\eta$ can be identified with a neighborhood of the identity in the automorphism group of the transverse holomorphic structure of $S$.


\begin{thebibliography}{99}

\bibitem{BoyerBook} Boyer, C.P. and Galicki, K.
\emph{Sasakian Geometry}.
Oxford Mathematical Monographs, Oxford University Press, Oxford, 2007.

\bibitem{Donaldson} Donaldson, S.K.
\emph{An application of gauge theory to four dimensional topology}.
J. Differential Geometry 18 (1983), 269--278.

\bibitem{DonaldsonKronheimer} Donaldson, S.K. and Kronheimer, P.B.
\emph{The Geometry of Four-Manifolds}.
Oxford Mathematical Monographs, Oxford University Press, Oxford, 1990.

\bibitem{Douady} Douady, A.
\emph{Le probl\`eme des modules pour les sous-espaces analytiques compacts d'un espace analytique donn\'e}. 
Ann. Inst. Fourier 16 (1966), 1--95.

\bibitem{DK}  Duchamp, T. and Kalka, M. 
\emph{Deformation theory for holomorphic foliations}. 
J. Differential Geom. 14 (1979), no. 3, 317--337.

\bibitem{Ebin} Ebin, D. G.
\emph{The manifold of riemannian metrics}.
Global Analysis (Proc. Sympos. Pure Math., Vol. XV, Berkeley, Calif., 1968) pp. 11--40, Amer. Math. Soc., Providence, R.I., 1970.

\bibitem{AzizMarcel} El Kacimi-Alaoui, A. and Nicolau, M. 
\emph{D\'eformations des feuilletages transversalement holomorphes \`a type diff\'erentiable fixe}. 
Publ. Mat. 33 (1989), no. 3, 485--500.

\bibitem{GHS} Girbau, J.; Haefliger, A. and Sundararaman, D.
\emph{On deformations of transversely holomorphic foliations}. 
J. Reine Angew. Math. 345 (1983), 122--147. 

\bibitem{Hamilton} Hamilton, R. S.
\emph{The inverse function theorem of Nash and Moser}.
 Bull. Amer. Math. Soc. (N.S.) 7 (1982), no. 1, 65--222.

\bibitem{K-S} Kodaira, K. and Spencer, D.C.
\emph{On deformations of complex analytic structures, I and II}.
Ann. of Math. 67 (1958), 328--466.

\bibitem{Kur1} Kuranishi, M.
\emph{New proof for the existence of locally complete families of complex structures}.
Proc. Conf. Complex Analysis (Minneapolis, 1964) pp. 142--154, Springer, Berlin, 1965.

\bibitem{KuranishiNote} Kuranishi, M.
\emph{A note on families of complex structures}.
Global Analysis, Papers in honor of K. Kodaira
pp. 309--313, Princeton University Press, Princeton NJ, 1969.

\bibitem{Lang} Lang, S.
\emph{Fundamentals of differential geometry}. 
Graduate Texts in Mathematics, 191, Springer, New York, 1999.

\bibitem{PS} Meersseman, L.
\emph{Feuilletages par vari\'et\'es complexes et probl\`emes d'uniformisation}. 
Complex Manifolds, Foliations and Uniformization, Panoramas \& Synth\`eses vol. 34/35 pp. 205--257, Editions de la SMF, Paris, 2011.

\bibitem{circle} Meersseman, L.
\emph{Kuranishi type moduli spaces for proper CR-submersions over the circle}. Preprint arxiv/1210.1244v3 (2014).

\bibitem{MeIMRN} Meersseman, L.
\emph{Vari\'et\'es CR polaris\'ees et $G$-polaris\'ees, partie I}. IMRN 2014 (2014), no. 21, 5912--5973.

\bibitem{modules} Meersseman, L.
\emph{Une d\'efinition d'espaces de modules locaux de structures CR}.
C.R. Acad. Sc. Paris 352 (2014), 143--145.

\bibitem{Teich} Meersseman, L.
\emph{The Teichm\"uller and Riemann Spaces as Analytic Stacks and Groupoids}. Preprint arxiv/1311.4170v2 (2014).


\bibitem{MK} Morrow, J. and Kodaira, K.
\emph{Complex Manifolds}.
Holt, Rinehart and Winston, New York, 1971.

\bibitem{Narasimhan} Narasimhan, R.
\emph{Analysis on Real and Complex Manifolds}.
North-Holland, Amsterdam, 1968.

\bibitem{MarcelPS} Nicolau, M.
\emph{Deformations of Holomorphic and Transversely Holomorphic Foliations}.
Complex Manifolds, Foliations and Uniformization, Panoramas \& Synth\`eses vol. 34/35, pp. 259--297, Editions de la SMF, Paris, 2011.

\bibitem{Nirenberg} Nirenberg, L.
\emph{A complex Frobenius theorem}.
Seminar of Analytic Functions, Institute for Advanced Studies, pp. 172--189, Princeton University Press, Princeton NJ, 1957.


\bibitem{NS} Nitta, Y.; Sekiya, K.
\emph{Uniqueness of Sasaki-Einstein Metrics}.
Tohoku Math. J. 64 (2012), 453--468.

\bibitem{Sparks} Sparks, J.
\emph{Sasaki-Einstein manifolds}.
Surveys in differential geometry. Volume XVI. Geometry of special holonomy and related topics, Surv. Differ. Geom. vol. 16, pp. 265--324, Int. Press, 
Somerville, MA, 2011.

\bibitem{Wa} Wavrik, J.J.
\emph{Obstructions to the existence of a space of moduli}.
Global Analysis, Papers in honor of K. Kodaira
pp. 403--414, Princeton University Press, Princeton NJ, 1969.

\end{thebibliography}
\end{document}